\tikzset{
>=stealth',
help lines/.style={dashed, thick},
axis/.style={<->},
important line/.style={thick},
connection/.style={thick, dotted},
}
\newcommand {\Omit}[1]{}
\newcommand{\nc}{\newcommand}
\nc{\rnc}{\renewcommand}
\nc{\bb}[1]{{\mathbb #1}}
\nc{\bbA}{\bb{A}}\nc{\bbB}{\bb{B}}\nc{\bbC}{\bb{C}}\nc{\bbD}{\bb{D}}
\nc{\bbE}{\bb{E}}\nc{\bbF}{\bb{F}}\nc{\bbG}{\bb{G}}\nc{\bbH}{\bb{H}}
\nc{\bbI}{\bb{I}}\nc{\bbJ}{\bb{J}}\nc{\bbK}{\bb{K}}\nc{\bbL}{\bb{L}}
\nc{\bbM}{\bb{M}}\nc{\bbN}{\bb{N}}\nc{\bbO}{\bb{O}}\nc{\bbP}{\bb{P}}
\nc{\bbQ}{\bb{Q}}\nc{\bbR}{\bb{R}}\nc{\bbS}{\bb{S}}\nc{\bbT}{\bb{T}}
\nc{\bbU}{\bb{U}}\nc{\bbV}{\bb{V}}\nc{\bbW}{\bb{W}}\nc{\bbX}{\bb{X}}
\nc{\bbY}{\bb{Y}}\nc{\bbZ}{\bb{Z}}
\nc{\mbf}[1]{{\mathbf #1}}
\nc{\bfA}{\mbf{A}}\nc{\bfB}{\mbf{B}}\nc{\bfC}{\mbf{C}}\nc{\bfD}{\mbf{D}}
\nc{\bfE}{\mbf{E}}\nc{\bfF}{\mbf{F}}\nc{\bfG}{\mbf{G}}\nc{\bfH}{\mbf{H}}
\nc{\bfI}{\mbf{I}}\nc{\bfJ}{\mbf{J}}\nc{\bfK}{\mbf{K}}\nc{\bfL}{\mbf{L}}
\nc{\bfM}{\mbf{M}}\nc{\bfN}{\mbf{N}}\nc{\bfO}{\mbf{O}}\nc{\bfP}{\mbf{P}}
\nc{\bfQ}{\mbf{Q}}\nc{\bfR}{\mbf{R}}\nc{\bfS}{\mbf{S}}\nc{\bfT}{\mbf{T}}
\nc{\bfU}{\mbf{U}}\nc{\bfV}{\mbf{V}}\nc{\bfW}{\mbf{W}}\nc{\bfX}{\mbf{X}}
\nc{\bfY}{\mbf{Y}}\nc{\bfZ}{\mbf{Z}}
\nc{\bfa}{\mbf{a}}\nc{\bfb}{\mbf{b}}\nc{\bfc}{\mbf{c}}\nc{\bfd}{\mbf{d}}
\nc{\bfe}{\mbf{e}}\nc{\bff}{\mbf{f}}\nc{\bfg}{\mbf{g}}\nc{\bfh}{\mbf{h}}
\nc{\bfi}{\mbf{i}}\nc{\bfj}{\mbf{j}}\nc{\bfk}{\mbf{k}}\nc{\bfl}{\mbf{l}}
\nc{\bfm}{\mbf{m}}\nc{\bfn}{\mbf{n}}\nc{\bfo}{\mbf{o}}\nc{\bfp}{\mbf{p}}
\nc{\bfq}{\mbf{q}}\nc{\bfr}{\mbf{r}}\nc{\bfs}{\mbf{s}}\nc{\bft}{\mbf{t}}
\nc{\bfu}{\mbf{u}}\nc{\bfv}{\mbf{v}}\nc{\bfw}{\mbf{w}}\nc{\bfx}{\mbf{x}}
\nc{\bfy}{\mbf{y}}\nc{\bfz}{\mbf{z}}
\newcommand{\bS}{\mathbb{S}}
\newcommand{\G}{\mathbb{G}}
\newcommand{\op}{\text{op}}
\nc{\mcal}[1]{{\mathcal #1}}
\nc{\calA}{\mcal{A}}\nc{\calB}{\mcal{B}}\nc{\calC}{\mcal{C}}\nc{\calD}{\mcal{D}}
\nc{\calE}{\mcal{E}} \nc{\calF}{\mcal{F}}\nc{\calG}{\mcal{G}}\nc{\calH}{\mcal{H}}
\nc{\calI}{\mcal{I}}\nc{\calJ}{\mcal{J}}\nc{\calK}{\mcal{K}}\nc{\calL}{\mcal{L}}
\nc{\calM}{\mcal{M}}\nc{\calN}{\mcal{N}}\nc{\calO}{\mcal{O}}\nc{\calP}{\mcal{P}}
\nc{\calQ}{\mcal{Q}}\nc{\calR}{\mcal{R}}\nc{\calS}{\mcal{S}}\nc{\calT}{\mcal{T}}
\nc{\calU}{\mcal{U}}\nc{\calV}{\mcal{V}}\nc{\calW}{\mcal{W}}\nc{\calX}{\mcal{X}}
\nc{\calY}{\mcal{Y}}\nc{\calZ}{\mcal{Z}}
\nc{\fA}{\frak{A}}\nc{\fB}{\frak{B}}\nc{\fC}{\frak{C}} \nc{\fD}{\frak{D}}
\nc{\fE}{\frak{E}}\nc{\fF}{\frak{F}}\nc{\fG}{\frak{G}}\nc{\fH}{\frak{H}}
\nc{\fI}{\frak{I}}\nc{\fJ}{\frak{J}}\nc{\fK}{\frak{K}}\nc{\fL}{\frak{L}}
\nc{\fM}{\frak{M}}\nc{\fN}{\frak{N}}\nc{\fO}{\frak{O}}\nc{\fP}{\frak{P}}
\nc{\fQ}{\frak{Q}}\nc{\fR}{\frak{R}}\nc{\fS}{\frak{S}}\nc{\fT}{\frak{T}}
\nc{\fU}{\frak{U}}\nc{\fV}{\frak{V}}\nc{\fW}{\frak{W}}\nc{\fX}{\frak{X}}
\nc{\fY}{\frak{Y}}\nc{\fZ}{\frak{Z}}
\nc{\fa}{\frak{a}}\nc{\fb}{\frak{b}}\nc{\fc}{\frak{c}} \nc{\fd}{\frak{d}}
\nc{\fe}{\frak{e}}\nc{\fFf}{\frak{f}}\nc{\fg}{\frak{g}}\nc{\fh}{\frak{h}}
\nc{\fri}{\frak{i}}\nc{\fj}{\frak{j}}\nc{\fk}{\frak{k}}\nc{\fl}{\frak{l}}
\nc{\fm}{\frak{m}}\nc{\fn}{\frak{n}}\nc{\fo}{\frak{o}}\nc{\fp}{\frak{p}}
\nc{\fq}{\frak{q}}\nc{\fr}{\frak{r}}\nc{\fs}{\frak{s}}\nc{\ft}{\frak{t}}
\nc{\fu}{\frak{u}}\nc{\fv}{\frak{v}}\nc{\fw}{\frak{w}}\nc{\fx}{\frak{x}}
\nc{\fy}{\frak{y}}\nc{\fz}{\frak{z}}
\newtheorem{theorem}{Theorem}[section]
\newtheorem{lemma}[theorem]{Lemma}
\newtheorem{corollary}[theorem]{Corollary}
\newtheorem{prop}[theorem]{Proposition}
\newtheorem*{lemma*}{Lemma}
\theoremstyle{definition}
\newtheorem{definition}[theorem]{Definition}
\newtheorem{example}[theorem]{Example}
\newtheorem{remark}[theorem]{Remark}
\newtheorem{remarks}[theorem]{Remarks}
\newtheorem{thm}{Theorem}
 \DeclareMathOperator{\gr}{gr}
\DeclareMathOperator{\im}{im} 
 \DeclareMathOperator{\id}{id}
 \DeclareMathOperator{\Sym}{Sym}
 \DeclareMathOperator{\GL}{GL}
\DeclareMathOperator{\Hom}{{Hom}} 
\DeclareMathOperator{\Ext}{{Ext}}
\DeclareMathOperator{\Cone}{{Cone}}
\DeclareMathOperator{\sHom}{{\mathscr{H}om}}
\DeclareMathOperator{\Spec}{{Spec}} 
\DeclareMathOperator{\Aut}{Aut}
 \DeclareMathOperator{\End}{End}
\DeclareMathOperator{\Mod}{Mod\hbox{-}}
\DeclareMathOperator{\lMod}{\hbox{-}Mod}
\DeclareMathOperator{\tors}{tors}
\DeclareMathOperator{\holim}{holim}
\DeclareMathOperator{\hocolim}{hocolim}
\DeclareMathOperator{\cofib}{cofib}
\DeclareMathOperator{\EM}{EM}
\DeclareMathOperator{\MSL}{{MSL}}
\DeclareMathOperator{\Laz}{\bbL az}
\DeclareMathOperator{\MGL}{{MGL}}
\DeclareMathOperator{\SH}{SH}
\DeclareMathOperator{\Ell}{Ell}
\DeclareMathOperator{\SL}{SL}
\DeclareMathOperator{\Spc}{Spc}
\DeclareMathOperator{\Pic}{Pic}
\DeclareMathOperator{\Tow}{Tow}
\DeclareMathOperator{\Gr}{Gr}
\DeclareMathOperator{\Sm}{Sm}
\DeclareMathOperator{\CH}{CH}
\DeclareMathOperator{\BGL}{BGL}
\DeclareMathOperator{\BSL}{BSL}
\DeclareMathOperator{\colim}{colim}
\DeclareMathOperator{\Sch}{Sch}
\DeclareMathOperator{\BU}{BU}
\newcommand{\sO}{\mathcal{O}}
\newcommand{\sF}{\mathscr{F}}
\newcommand{\sH}{\mathcal{H}}
\newcommand{\sX}{\mathcal{X}}
\newcommand{\sV}{\mathscr{V}}
\newcommand{\sE}{\mathscr{E}}
\newcommand{\CC}{\mathbb{C}}
\DeclareMathOperator{\Char}{char}
\DeclareMathOperator{\topo}{top}
\DeclareMathOperator{\MSU}{MSU}
\DeclareMathOperator{\BSU}{BSU}
\DeclareMathOperator{\Th}{Th}
\DeclareMathOperator{\tho}{th}
\DeclareMathOperator{\Ho}{Ho}
\newcommand{\inj}{\hookrightarrow}
\def\angl#1{{\langle #1\rangle}}
\newcommand{\bideg}{^{*,*}}
\newcommand{\<}{\langle}
\renewcommand{\>}{\rangle}
\newcommand{\PP}{\bbP}
\renewcommand{\P}{\PP}
\newcommand{\ZZ}{\bbZ}
\newcommand{\QQ}{\bbQ}
\newcommand{\Z}{\bbZ}
\newcommand{\Q}{\bbQ}
\newcommand{\N}{\bbN}
\newcommand{\A}{\bbA}
\newcommand{\mS}{\bbS}
\newcommand{\F}{\mathbb{F}}
\DeclareMathOperator{\MU}{MU}
\DeclareMathOperator{\SU}{SU}
\DeclareMathOperator{\eff}{eff}
\DeclareMathOperator{\DM}{DM}
\DeclareMathOperator{\thom}{th}
 \gdef\Young(#1){\hbox{$\vcenter
 {\mathcode`,="8000\mathcode`|="8000
  \def,{\global\advance\cols by 1 &}%
  \def|{\cr
        \multispan{\the\cols}\hrulefill\cr
        &\global\cols=2 }%
  \offinterlineskip\everycr{}\tabskip=0pt
  \dimen0=\ht\strutbox \advance\dimen0 by \dp\strutbox
  \halign
   {\vrule height \ht\strutbox depth \dp\strutbox##
    &&\hbox to \dimen0{\hss$##$\hss}\vrule\cr
    \noalign{\hrule}&\global\cols=2 #1\crcr
    \multispan{\the\cols}\hrulefill\cr%
   }
 }$}}
\def\easycyrsymbol#1{\mathord{\mathchoice
  {\mbox{\fontsize\tf@size\z@\usefont{T2A}{\rmdefault}{m}{n}#1}}
  {\mbox{\fontsize\tf@size\z@\usefont{T2A}{\rmdefault}{m}{n}#1}}
  {\mbox{\fontsize\sf@size\z@\usefont{T2A}{\rmdefault}{m}{n}#1}}
  {\mbox{\fontsize\ssf@size\z@\usefont{T2A}{\rmdefault}{m}{n}#1}}
}}
\newcommand{\yu}{\easycyrsymbol{\cyryu}}
\title[Algebraic elliptic cohomology and flops \uppercase\expandafter{\romannumeral2}]
{Algebraic elliptic cohomology and flops \uppercase\expandafter{\romannumeral2}: $\SL$-cobordism}
\date{ \today}
\author[M.~Levine]{Marc~Levine}
\address{Universit\"at Duisburg-Essen,
Fakult\"at Mathematik, Campus Essen, 45117 Essen, Germany}
\email{marc.levine@uni-due.de}
\author[Y.~Yang]{Yaping~Yang}
\address{School of Mathematics and Statistics, The University of Melbourne, 813 Swanston Street, Parkville VIC 3010, Australia}
\email{yaping.yang1@unimelb.edu.au}
\author[G.~Zhao]{Gufang~Zhao}
\address{School of Mathematics and Statistics, The University of Melbourne, 813 Swanston Street, Parkville VIC 3010, Australia}
\email{gufangz@unimelb.edu.au}
\subjclass[2010]{
Primary
55N22, 
55P42;  
Secondary
55N34,  
14E15.  
}
\keywords{Elliptic genus, Thom spectrum, motivic Adams spectral sequence, flops}
\begin{document}
\begin{abstract}
In this paper, we study the algebraic Thom spectrum $\MSL$ in Voevodsky's motivic stable homotopy category  over an arbitrary perfect field $k$. Using the motivic Adams spectral sequence, we compute the 
geometric part of the $\eta$-completion of $\MSL$. As an application, we  study  Krichever's elliptic genus with integral coefficients, restricted to $\MSL$.   We determine its image, and identify its kernel as  the ideal generated by differences of $\SL$-flops. This was proved by B. Totaro in the complex analytic setting.
In the appendix, we prove some convergence properties of the motivic Adams spectral sequence.
\end{abstract}
\thanks{
The  authors Y.Y. and G.Z are grateful to Universit\"at Duisburg-Essen for hospitality  and excellent working conditions, and for financial support via DFG Schwerpunktprogramm
SPP 1786. We would like to thank Diarmuid Crowley for pointing out the reference [S68], and for very useful discussions. Y.Y. was partially supported by 
the Australian Research Council (ARC) via the award DE190101231. G.Z. was partially supported by ARC via the award
DE190101222. M.L. was partially supported by the DFG through the grant  LE 2259/7-2 and by the ERC through the project QUADAG. All three authors thank the referee for the detailed and very helpful comments, which have greatly improved this paper. This paper is part of a project that has received funding from the European Research Council (ERC) under the European Union's Horizon 2020 research and innovation programme (grant agreement No. 832833).}

\maketitle

\section{Introduction}

\subsection{Motivation}
In this paper, we study the algebraic Krichever elliptic genus, when restricted to the $\SL$-cobordism ring with integral coefficients. 

Elliptic genera in topology have a renowned rigidity property, conjectured by Witten and proved by many others, including Bott-Taubes \cite{BT}, Liu \cite{Liu}, and Ando \cite{An}, which says that for any Spin-manifold with $S^1$-action, the $S^1$-equivariant elliptic genus does not depend on the equivariant parameter. 
Similarly, there is a $\SU$-rigidity theorem for a certain elliptic genus with two parameters (referred to as  Krichever's elliptic genus) proved by Krichever \cite{Kr} and H\"ohn \cite{H}, which says the $S^1$-equivariant Krichever elliptic genus of any $\SU$-manifold is a constant. In \cite{Totaro} Totaro proved that a genus has the $\SU$-rigidity property if and only if its values on two birational manifolds related by a flop are equal. Further more, he proved that  Krichever's elliptic genus is  universal with respect to this property. The proof in \cite{Totaro} uses topological constructions, which do not have direct counterparts for varieties over an arbitrary field.

In \cite{LYZ}, the authors gave a purely algebraic proof of the fact that the kernel of Krichever's elliptic genus coincides with the ideal generated by differences of flops, a proof which works for varieties over an arbitrary perfect field. Moreover, we obtained the existence of a corresponding motivic oriented cohomology theory representing elliptic cohomology. 
With rational coefficients,  we have a description of the coefficient ring of the motivic elliptic cohomology. 
For a summary of the main results in \cite{LYZ}, see \S~\ref{subsec:summary_LYZ1}.

The rigidity property in topology suggests that after the restriction to $\SL$-cobordism,  this algebraic Krichever elliptic genus has better properties; this is the subject of the present paper. For example, it is natural to expect,  as in topology, that the image of $\SL$-cobordism is noetherian, even though  the image under the elliptic genus of the $\GL$-cobordism  is  not. 

The coefficient ring of algebraic $\SL$-cobordism has not been fully investigated, unlike its topological analogue, which goes back to a classical result of Novikov in the study of Adams spectral sequences \cite{Nov}. In the present paper, following Novikov's approach, we explore the motivic Adams spectral sequence to get the information about the coefficient ring of $\SL$-cobordism necessary for the study of elliptic genus. 
Along the way,  we study convergence property of motivic Adams spectral sequence in Appendix~\ref{app}, with the results summarized in \S~\ref{subsec:intro_MASS}. 

\subsection{Main theorems}
Let $k$ be an arbitrary perfect field. Let $p$ be the exponential characteristic of $k$. That is, $p=\text{char }k$ if $\text{char }k>0$, $p=1$ if $\text{char }k=0$.
Let $\SH(k)$ be the motivic stable homotopy category of $\bbP^1$-spectra (for the conventions we are following, see \S~\ref{sec:prelim}); we often refer to an object of $\SH(k)$ as a {\em motivic spectrum}. For $\sE\in \SH(k)$, let $\sE^\wedge_{\eta}$ denote the completion of $\sE$ with respect to the stable algebraic Hopf map $\eta: \G_m=S^{1, 1}\to S^0=S^{0, 0}$ and write $\sE^n$ for the ``algebraic part''  $\sE^{2n,n}(k)$ of the coefficient ring $\sE^{*,*}(k)$. Let $\MGL,\MSL\in\SH(k)$ be the Thom spectra of $(\GL_n)_n$ and $(\SL_n)_n$ respectively.

 Let ${\MSL}^{div*}\subset {\MSL}^{\wedge}_{\eta}[1/2p]^{*}$ be the maximal subgroup that is $l$-divisible for all primes $l\neq 2, p$ and let $\overline{\MSL}^{\wedge}_{\eta}[1/2p]^{*}$ be the quotient ${\MSL}^{\wedge}_{\eta}[1/2p]^{*}/{\MSL}^{div*}$.  There is a natural map $\MSL^\wedge_\eta[1/2p]^*\to \MGL[1/2p]^*$ (Lemma~\ref{lem:MGL eta}) which   induces an embedding $\overline{\MSL}^{\wedge}_{\eta}[1/2p]^{*}\hookrightarrow  \MGL[1/2p]^*$ (see Theorem~\ref{thm:MSL} below). 

There is a formal group law with coefficients in $\MGL^*$ induced by the  first Chern class of line bundles in $\MGL$-theory, giving the classifying homomorphism $\phi_{\MGL}: \Laz \to \MGL^*$, where $\Laz$ is the Lazard ring. It follows from Hopkins-Morel isomorphism, proved by Hoyois \cite[Theorem 5.11]{Hoy}, together with work of Spitzweck \cite[Corollary 4.9]{Spitz} that $\phi_{\MGL}$ is an isomorphism after inverting $p$.

Let $\tilde\Ell$ be the ring $\bbZ[a_1,a_2,a_3, (1/2)a_4]$. We have the elliptic curve $\sE_{\tilde\Ell}$ over $\Spec \tilde\Ell$  defined as the base-change of the Weierstrass curve $y^2 + \mu_1xy + \mu_3y = x^3 + \mu_2x^2 + \mu_4x + \mu_6$ along 
\[
\mu_1\mapsto2a_1,\ \mu_2\mapsto 3a_2-a_1^2,\
\mu_3\mapsto-a_3,\
\mu_4\mapsto -\frac{1}{2}a_4+3a_2^2-a_1a_3,\
\mu_6\mapsto0.
\]
 We let $\Delta\in \tilde\Ell$ denote the discriminant of this family and set $\Ell:=\tilde\Ell[\Delta^{-1}]$.

The local uniformizer $t=y/x$ of the elliptic curve gives rise to a formal group law over $\tilde\Ell$, with the corresponding classifying homomorphism $\Laz\to \tilde\Ell$. Via Hoyois' theorem, this gives the ring homomorphism  $\phi[1/p]: \MGL^*[1/p]\to \tilde\Ell[1/p]$, which is the algebraic Krichever elliptic genus \cite[\S~3.1]{LYZ}.
The restriction of $\phi[1/2p]:\MGL[1/2p]^*\to\tilde\Ell[1/2p]$ to $\overline{\MSL}^\wedge_\eta[1/2p]^*$ is denoted by $\overline{\phi}$. 

Let  $\calI_{fl}\subseteq \MGL[1/p]^*$ be the ideal generated by differences of flops.
Define the ideal of $\SL$-flops $\calI^{\SL}_{fl}\subseteq\overline{\MSL}^\wedge_\eta[1/2p]^*$ to be $\overline{\MSL}^\wedge_\eta[1/2p]^*\bigcap \calI_{fl}[1/2p]$. 
In this paper, we prove
\begin{thm}\label{thm:elliptic genus}
The kernel of $\overline{\phi}$ in $\overline{\MSL}^{\wedge}_{\eta}[1/2p]^*$ is $\calI^{\SL}_{fl}$ and the image of $\overline{\phi}$ is the polynomial ring
$\bbZ[1/2p][3a_2, a_3, a_4]$, with $\deg(a_i)=-i$. 
\end{thm}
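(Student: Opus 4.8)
The plan is to reduce the theorem to a computation of the geometric part of $\MSL^\wedge_\eta[1/2p]^*$, which (by the results announced earlier in the paper, using the motivic Adams spectral sequence) should be expressible in terms of the topological $\MSU_*[1/2]$ together with the $\eta$-completed motivic sphere. First I would establish the structural input: after inverting $2p$ and $\eta$-completing, $\MSL^\wedge_\eta$ becomes "essentially topological" in the sense that $\MSL^\wedge_\eta[1/2p]^*$ is a polynomial ring over $\bbZ[1/2p]$ (with some bookkeeping of the extra motivic weight grading), matching Novikov's classical computation that $\MSU_*[1/2]=\bbZ[1/2][y_2,y_3,y_4,\dots]$ with $\deg y_i=-i$. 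The embedding $\MSL^\wedge_\eta[1/2p]^*\hookrightarrow\MGL[1/2p]^*$ from Lemma~\ref{lem:MGL eta} identifies this polynomial ring with a subring of the Lazard ring (localized).

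Next I would analyze the restriction $\overline{\phi}$ of the Krichever genus. On $\MGL[1/2p]^*$ the genus $\phi$ is the classical Krichever formal-group-law map to $\Ell$; I would compute its effect on the generators $y_i$ of the $\SU$-type subring. The key point is that the Krichever elliptic genus, by its very construction via the local uniformizer $t=y/x$ of the Weierstrass curve, sends the $\SU$-cobordism generators onto the coefficients $3a_2, a_3, a_4$ (the surviving Weierstrass parameters after the substitution displayed before the theorem), and annihilates nothing in low degrees that would obstruct surjectivity onto $\bbZ[1/2p][3a_2,a_3,a_4]$. So the image claim follows by showing (i) $\overline\phi$ lands in $\bbZ[1/2p][3a_2,a_3,a_4]\subseteq\Ell[1/2p]$, which is a formal-group-law calculation with the given Weierstrass data, and (ii) the three elements $3a_2,a_3,a_4$ are actually hit, which I would verify on explicit low-degree generators (the relevant $\SU$-manifolds / their motivic lifts, or directly on polynomial generators of $\MSL^\wedge_\eta[1/2p]^*$).

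For the kernel statement I would argue by a dimension/freeness count combined with the flop relations. By \cite{Totaro} in the topological setting and by \cite{LYZ} algebraically, $\calI_{fl}[1/2p]\subseteq\MGL[1/2p]^*$ is exactly $\ker\phi[1/2p]$; intersecting with the subring $\MSL^\wedge_\eta[1/2p]^*$ gives $\calI^{\SL}_{fl}=\ker\overline\phi$ formally, since $\overline\phi$ is the restriction of $\phi[1/2p]$. Thus the kernel statement is almost tautological given the definition of $\calI^{\SL}_{fl}$ as $\MSL^\wedge_\eta[1/2p]^*\cap\calI_{fl}[1/2p]$ — the real content is that this intersection is nonzero exactly in the expected way, i.e. that the image is genuinely the full polynomial ring $\bbZ[1/2p][3a_2,a_3,a_4]$ and not something smaller, so that $\MSL^\wedge_\eta[1/2p]^*/\calI^{\SL}_{fl}\cong\bbZ[1/2p][3a_2,a_3,a_4]$ with the stated degrees. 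I would close by checking that the quotient is torsion-free and has the right Poincaré series in each degree, which forces it to be the free polynomial ring on three generators in degrees $-2,-3,-4$.

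The main obstacle I expect is step one: pinning down $\MSL^\wedge_\eta[1/2p]^*$ precisely via the motivic Adams spectral sequence. One must control the $\eta$-completed motivic sphere spectrum's homotopy after inverting $2p$, identify the relevant $E_2$-term (a $\mathrm{Tor}$ over the motivic dual Steenrod algebra, or after $\eta$-completion a more tractable algebra), and prove the spectral sequence converges — which is exactly why the appendix on convergence is needed. Once the coefficient ring is known to be a polynomial algebra matching $\MSU[1/2]$ in the appropriate grading, everything else is a formal-group-law bookkeeping exercise with the explicit Weierstrass substitution given just before the theorem statement.
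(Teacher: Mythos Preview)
Your overall architecture is right, and step one (identifying $\MSL^\wedge_\eta[1/2p]^*$ as a polynomial ring via the motivic Adams spectral sequence) matches the paper's Theorem~\ref{thm:MSL}. But your kernel argument has a genuine gap.

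You write that by \cite{Totaro} and \cite{LYZ}, $\calI_{fl}[1/2p]$ equals $\ker\phi[1/2p]$ inside $\MGL[1/2p]^*$, so that $\calI^{\SL}_{fl}=\ker\overline\phi$ is tautological. This is not what is available: the result of \cite{LYZ} (Theorem~\ref{thm:LYZ} here) is only \emph{rational}, namely $\ker\phi_\bbQ=\calI_{fl}\otimes\bbQ$. Over $\bbZ[1/2p]$ one only knows the inclusion $\calI_{fl}\subseteq\ker\phi$, not equality. So intersecting with $\MSL^\wedge_\eta[1/2p]^*$ gives only $\calI^{\SL}_{fl}\subseteq\ker\overline\phi$, and the reverse inclusion needs a separate argument. (Relatedly, your image step (i) is not a routine formal-group-law check: on $\MGL$ the image of $\phi_\bbQ$ is $\bbQ[a_1,a_2,a_3,a_4]$, so one must explain why restricting to the $\SL$-part both kills the $a_1$-direction and produces exactly $\bbZ[1/2p][3a_2,a_3,a_4]$ rather than some sublattice.)

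The paper closes this gap by working directly with the polynomial generators $W_i$ of $\MSL^\wedge_\eta[1/2p]^*$ furnished by Theorem~\ref{thm:MSL}. It proves two things: first, an explicit Chern-number computation (Lemma~\ref{prop:compute s_n}) for differences of classical flops, combined with Totaro's \cite[Lemma~6.2]{Totaro}, shows that every generator $W_i$ with $i\ge5$ already lies in $\calI_{fl}$ (Lemma~\ref{lem:step2}); second, the explicit values $\phi(W_2)=24a_2$, $\phi(W_3)=a_3$, $\phi(W_4)=6a_2^2-a_4$ computed in \cite{LYZ} are algebraically independent. Together these pin down $\ker\overline\phi$ as the ideal generated by $\{W_i:i\ge5\}$, which is contained in $\calI^{\SL}_{fl}$, yielding the reverse inclusion; and they simultaneously identify the image as $\bbZ[1/2p][3a_2,a_3,a_4]$. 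Your dimension/Poincar\'e-series idea would also need exactly this input (that the high-degree generators are honest flop differences integrally), so it does not bypass the computation.
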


Although this statement is similar to its topological analogue \cite[Theorem~6.1]{Totaro}, the proof here is more involved. Not knowing the homotopy groups of motivic Thom spectrum $\MSL$, we need to study the motivic Adams spectral sequence, which differs from its classical analogue in many respects. One of the major differences is the non-nilpotence of the algebraic Hopf map $\eta$.

We calculate of the $E_2$ page of the mod-$l$ motivic Adams spectral sequence for $\MSL$, which, thanks to the general convergence properties of the motivic Adams spectral sequence discussed in Appendix~\ref{app} (summarized in Appendix~\ref{app},  \S~\ref{subsec:intro_MASS}),  converges to the  coefficient ring $\MSL_{\eta,l}^{\wedge *,*}(k)$ of the $\eta, l$-completion of $\MSL$. We  show that certain differentials vanish (Proposition~\ref{diff_MGLMSL}), proving the following result, which is a key ingredient in the proof of Theorem~\ref{thm:elliptic genus}.

\begin{thm}
\label{thm:MSL} Let ${\MSL}^{div*}\subset {\MSL}^{\wedge}_{\eta}[1/2p]^{*}$ be the maximal subgroup that is $l$-divisible for all primes $l\neq 2, p$ and let $\overline{\MSL}^{\wedge}_{\eta}[1/2p]^{*}$ be the quotient ${\MSL}^{\wedge}_{\eta}[1/2p]^{*}/{\MSL}^{div*}$. 
\ 
\begin{enumerate}
\item The ring $\overline{\MSL}^{\wedge}_{\eta}[1/2p]^{*}$ is a  polynomial ring over $\Z[1/2p]$ in generators $v_2, v_3,\ldots$, with  $\deg v_i=-i$
\item The canonical map $\MSL\to \MGL$ induces an injection $\overline{\MSL}^{\wedge}_{\eta}[1/2p]^{*}\to {\MGL}[1/2p]^{*}$.
\item Let $X$ be a smooth projective $k$-scheme of dimension $n\ge2$ and let $[X]_{\MGL}\in \MGL^{-n} $ be the corresponding class (see \S~\ref{sec:CY}). Let  $s_n(X):=\deg_k(c_{(n)}(T_X))$ be the Chern number of degree $n$ associated to the symmetric polynomial $\xi_1^n+\cdots +\xi_n^n$. Then  $[X]_{\MGL}$ is a polynomial generator of $\overline{\MSL}^{\wedge}_{\eta}[1/2p]^{ *}$  if and only if the following  holds:
\[
s_n(X)=\left\{
  \begin{array}{ll}
    \pm l \cdot 2^ap^b, & \hbox{ if $n$ is a power of an odd prime $l$;} \\
    \pm l \cdot 2^ap^b, & \hbox{ if $n + 1$ is a power of an odd prime $l$;} \\
    \pm 2^ap^b, & \hbox{ otherwise,}
  \end{array}
\right.
\]
for  some non-negative integers $a, b$.
\end{enumerate}
\end{thm}

{
\begin{remark}
With 2 inverted, the motivic stable homotopy category $\SH(k)$ splits as
\[
\SH(k)[\frac{1}{2}]=\SH(k)[\frac{1}{2}]^{+} \oplus \SH(k)[\frac{1}{2}]^{-}. 
\]
The projection of $\eta$ to $\SH(k)[\frac{1}{2}]^{+}$ is 0 while the projection of
$\eta$ to $\SH(k)[\frac{1}{2}]^{-}$ is invertible and $\MSL[\frac{1}{2}]^{\wedge}_{\eta}= \MSL[\frac{1}{2}]^+$ is
precisely the plus part of the spectrum $\MSL$. 
In \cite{B18},  Tom Bachmann establishes a precise relation between the minus part of $\SH(k)$ and the classical
stable homotopy category $\SH$, in particular, one has $\SH(\mathbb{R})[\frac{1}{2}]^{-}\cong \SH[\frac{1}{2}]$. Hence the computation from Theorem \ref{thm:MSL} gives  the
previously unknown half of the algebraic diagonal $\MSL[\frac{1}{2p}]^{2*,*}$  of the coefficient ring for $\MSL[\frac{1}{2p}]$, modulo the prime to $2p$-divisible part ${\MSL}^{div*}$. 
\end{remark}
}

\begin{remark} We conjecture that the prime to $2p$-divisible part ${\MSL}^{div*}$ is zero. Presumably, an analysis of the mod $l$-theory for $l$ a prime $\neq 2,p$ would show that 
${\MSL}^{div*}$ is uniquely divisible, but we have not pursued this question here.
\end{remark}

\subsection{Convergence of motivic Adams spectral sequence}\label{subsec:intro_MASS}
In topology, a number of convergence properties of the Adams spectral sequences were  proven in \cite{Adams}. Bousfield \cite{Bous} developed a simpler approach,    based on notions  of localization going back to Ravenel \cite{Rav}. 

In Appendix~\ref{app}, we study the convergence properties of the motivic Adams spectral sequence \cite{Mor},  following the method of \cite{Bous}. The role of Postnikov tower in {\it loc. cit.} is replaced by the slice tower of Voevodsky \cite{Voev_open}, together with results on this tower developed in \cite{RSO}.

Without going to the technical details, we state  our analogue of Bousfield's theorem. For a spectrum $\sE$, we let  $\sE^\wedge_{\eta,l}$ denote the $\eta$-$l$-completion of $\sE$ (see \S~\ref{sec:MASS}).

\begin{thm}[Theorem~\ref{thm:app}]\label{thm:intro_MASS}
Let $l$ be a prime different than $\Char(k)$. Let $\sE\in\SH(k)$ be a slice connective motivic spectrum, that is,  
$f_N\sE=\sE$ for $N \ll 0$. Let $\sE_{H\bbZ/l}^{\wedge}$ be the homotopy limit of the Adams tower \eqref{eqn:AdamsTower}. If $\sE$ has a cell presentation of finite type (\cite[\S~3.3]{RSO}) and satisfies condition \eqref{eqn:fin}, then   $\sE_{H\bbZ/l}^{\wedge}\cong \sE^\wedge_{\eta,l}$.
\end{thm}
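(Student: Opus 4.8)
The plan is to follow the two-step strategy of Bousfield \cite{Bous}: first identify the $H\bbZ/l$-nilpotent completion $Y^{\wedge}_{H\bbZ/l}$ with the homology localization $L_{H\bbZ/l}Y$, and then identify $L_{H\bbZ/l}Y$ with the $(\eta,l)$-completion $Y^{\wedge}_{\eta,l}=(Y^{\wedge}_{\eta})^{\wedge}_{l}$. Since the natural map $Y\to Y^{\wedge}_{H\bbZ/l}$ is an $H\bbZ/l$-homology equivalence, it suffices to prove two assertions: (a) $Y^{\wedge}_{H\bbZ/l}$ is $H\bbZ/l$-local; and (b) $Y^{\wedge}_{\eta,l}$ is $H\bbZ/l$-local and $Y\to Y^{\wedge}_{\eta,l}$ is an $H\bbZ/l$-homology equivalence. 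Granting these, the universal property of homology localization forces $Y^{\wedge}_{H\bbZ/l}\simeq L_{H\bbZ/l}Y\simeq Y^{\wedge}_{\eta,l}$, which is the claim.

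Assertion (b) is the easier half. Up to reindexing, $Y^{\wedge}_{\eta,l}$ is the homotopy inverse limit of the finite quotients $Y/(\eta^{m},l^{n})$; each such quotient is $l$-power torsion and $\eta$-nilpotent, hence $H\bbZ/l$-local (being $l$-complete it receives no maps from rational acyclics, and being $\eta$-nilpotent it receives none from $\eta$-periodic ones), and a homotopy inverse limit of $H\bbZ/l$-local spectra is $H\bbZ/l$-local. For the homology equivalence, note that $\eta$ maps to $0$ in $\pi_{1,1}(H\bbZ/l)=H^{-1,-1}(\Spec k;\bbZ/l)=0$, so $\eta$ acts trivially on mod-$l$ motivic homology; under our finiteness hypotheses the relevant $\lim^{1}$ terms vanish, so $Y\to Y^{\wedge}_{\eta}$ is an $H\bbZ/l$-homology equivalence, and $l$-completion is always one. (It is here, and in the identification of the mod-$l$ motivic Steenrod algebra underlying the whole construction, that the hypothesis $l\neq\Char k$ enters.)

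Assertion (a) is the substance of the theorem: the homotopy inverse limit of the Adams tower \eqref{eqn:AdamsTower} is $H\bbZ/l$-local. Here the slice tower of Voevodsky \cite{Voev_open} takes over the role of the Postnikov tower in Bousfield's argument. I would proceed as follows. Each slice $s_{q}Y$ is a module over the motivic Eilenberg--MacLane spectrum $H\bbZ$, of finite type under our hypotheses; on such a module $\eta$ acts trivially, and its $H\bbZ/l$-based motivic Adams spectral sequence is the classical one computing the ordinary $l$-completion $(s_{q}Y)^{\wedge}_{l}$, which is $H\bbZ/l$-local. By \cite{RSO}, for a spectrum with a cell presentation of finite type satisfying \eqref{eqn:fin} the slice tower converges, with the necessary control over derived inverse limits. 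Comparing the Adams tower of $Y$ with the (compatible) Adams towers of its slices, and checking that the iterated derived inverse limits — over Adams filtration and over slice degree — vanish, one concludes that $Y^{\wedge}_{H\bbZ/l}$ is assembled out of the $H\bbZ/l$-local spectra $(s_{q}Y)^{\wedge}_{l}$ and is therefore itself $H\bbZ/l$-local.

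The main obstacle is precisely assertion (a), and within it the interplay between the two towers in a setting where, unlike the classical case, $\eta$ is not nilpotent: the localization $L_{H\bbZ/l}$ genuinely destroys the $\eta$-periodic part of the stable stems, and the motivic sphere is not bounded below in the naive sense, so none of Bousfield's connectivity shortcuts are available. Making the argument rigorous requires (i) a sufficiently strong convergence statement for the slice tower of a finite-type spectrum satisfying \eqref{eqn:fin}, extracted from \cite{RSO}, and (ii) a careful bookkeeping of the doubly-indexed homotopy inverse limits showing that the relevant $\lim^{1}$ groups vanish; condition \eqref{eqn:fin} is imposed exactly so that this bookkeeping goes through. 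These technical points are carried out in Appendix~\ref{app}.
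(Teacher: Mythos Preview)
Your route through Bousfield localization $L_{H\bbZ/l}$ is genuinely different from the paper's, and the gap lies in assertion (b), which you call ``the easier half.''

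The claim that each $Y/(\eta^m,l^n)$ is $H\bbZ/l$-local rests on an implicit characterization of $H\bbZ/l$-acyclic spectra as generated by $l$-divisible and $\eta$-periodic objects. In $\SH(k)$ over a general perfect field this characterization is not available a priori; establishing it is tantamount to the theorem itself. In topology one knows that $H\bbZ/l$-acyclics are exactly the uniquely $l$-divisible spectra, but motivically there is no such description, and there is no reason an $\eta$-nilpotent, $l$-torsion object must receive no maps from an arbitrary $H\bbZ/l$-acyclic. Your parenthetical justification (``receives no maps from rational acyclics \ldots\ none from $\eta$-periodic ones'') assumes precisely what must be shown.

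The paper avoids Bousfield localization entirely. It adapts a \emph{different} piece of Bousfield's machinery---the notion of $E$-nilpotent resolution---with a crucial modification: the target class $M(E)$ consists of finite extensions of $H\bbZ/l$-modules that are in addition slice-bounded ($W=f^nW$ for $n\gg0$). The paper then shows that two explicit towers, the slice-truncated Adams tower $\{f^s(\overline{E}_s\wedge Y)\}$ and the slice-truncated Moore tower $\{f^s(S\bbZ/l^s\wedge Y)\}$, are both $E$-nilpotent resolutions in this sense, hence have the same homotopy limit. A diagonal argument identifies these limits with $sc(Y^\wedge_E)$ and $sc(Y^\wedge_l)$ respectively, and the theorem of R\"ondigs--Spitzweck--{\O}stv{\ae}r gives $sc(Y^\wedge_l)\cong Y^\wedge_{\eta,l}$ under the finite-type hypothesis. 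Condition \eqref{eqn:fin} then enters only to show, by a direct vanishing argument via Theorem~\ref{Thm19.3}, that $Y^\wedge_E$ is already slice complete. The slice truncation is the device that substitutes for Bousfield's connectivity hypotheses without ever requiring one to identify the class of $H\bbZ/l$-acyclics.
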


\begin{remark} Recently L. Mantovani \cite{Mantovani} has transferred Bousfield's methods analyzing $E$-completion and $E$-localization to the motivic setting. As a particular case, his results show that, for $\sE\in \SH(k)$ a connective spectrum (see \S~\ref{subsec:Background}) one has $\sE_{H\bbZ/l}^{\wedge}\cong \sE^\wedge_{\eta,l}$. As $\MGL$ and $\MSL$ are connective, Mantovani's results also yield the convergence properties that we need; we include our discussion of Theorem~\ref{thm:intro_MASS} here as our proof also gives  convergence results for non-connective spectra.
\end{remark}

\subsection*{Organization of the paper}
In \S~\ref{sec:prelim}, we recall  some background on the motivic stable homotopy category and some basic facts about the motivic spectra $\MGL$ and $\MSL$. We recall the construction of the class in $\MGL^*$ associated to a smooth projective $k$-scheme in \S\ref{sec:CY} and show how to lift the class of a Calabi-Yau variety to a class in $\MSL^*$. In \S~\ref{sec:Steenrod}, we introduce the motivic mod-$l$ Steenrod algebra $A^{*,*}$ and recall some of its basic properties. We study the motivic cohomologies  $H^{*,*}\MGL$ and  $H^{*,*}\MSL$ as modules over $A^{*,*}$
in \S~\ref{sec:SteenMod}. This information will be used in the calculation of the motivic Adams spectral sequence in \S~\ref{sec:MASS}, which enables us  to prove Theorem~\ref{thm:MSL}. In \S~\ref{sec:EllGenus}, we use Theorem~\ref{thm:MSL} to  prove Theorem~\ref{thm:elliptic genus}.
The convergence properties of the motivic Adams spectral sequence that we need will be discussed in Appendix~\ref{app} and in Appendix~\ref{App:Novikov} we supply a proof of Novikov's lemma \cite[Lemma 16]{Nov}, which is stated without proof in {\it loc. cit.}.

\section{Preliminaries}
\label{sec:prelim}

\subsection{Background on motivic  homotopy theory}\label{subsec:Background} We fix a perfect field $k$ and let $p$ denote the exponential characteristic of $k$.
Let $\Sm_k$ be the category of separated and finite type schemes ({\em varieties}), smooth over $k$. We have the category of {\em  spaces over $k$}, $\Spc(k)$, and  {\em pointed  spaces over $k$}, $\Spc_\bullet(k)$, which are  the categories of presheaves of simplicial sets (resp.  pointed simplicial sets) on $\Sm_k$; sending $X\in \Sm_k$ to the constant  simplicial set $X$ (resp. constant  pointed simplicial set $X_+$) on the representable presheaf $\Hom_{\Sm_k}(-,X)$ identifies $\Sm_k$ with a subcategory of $\Spc(k)$ (resp.  $\Spc_\bullet(k)$). The  {\em motivic unstable homotopy category} $\sH_\bullet(k)$  is a certain localization of $\Spc(k)$ and the pointed motivic homotopy category is constructed similarly (see  \cite[\S3]{MorelVoev}); in particular, each $X\in \Sm_k$ defines the objects $X$ of $\sH(k)$ and  $X_+$ of $\sH_\bullet(k)$.  

We  have suspension functors
\[
\Sigma_{S^1},\ \Sigma_{\G_m},\ \Sigma_{\P^1}:\sH_\bullet(k)\to \sH_\bullet(k)
\]
and for $a\ge b\ge0$ the suspension $\Sigma^{a,b}:=\Sigma_{S^1}^{a-b}\Sigma_{\G_m}^b$. For $\sX\in \Spc_\bullet(k)$ and $a\ge b\ge0$,  the {\em unstable $\A^1$-homotopy sheaf} $\pi_{a,b}^{\A^1}(\sX)$ is the Nisnevich sheaf associated to the presheaf on $\Sm_k$
\[
U\mapsto \Hom_{\sH_\bullet(k)}(\Sigma^{a, b}U_+, \sX).
\]
We set $\pi_n^{\A^1}(\sX):=\pi_{n,0}^{\A^1}(\sX)$.

Let $\SH(k)$ be the motivic stable homotopy category of $\PP^1$-spectra: a $\PP^1$-spectrum consists of a sequence $(E_0, E_1,\ldots)$, $E_n\in  \Spc_\bullet(k)$, together with bonding maps $\epsilon_n:\Sigma_{\P^1}E_n\to E_{n+1}$ in $\Spc_\bullet(k)$, and $\SH(k)$ is a certain localization of the category of $\PP^1$-spectra  (see e.g. \cite{Jardine} for details).  Recall that one has the {\em infinite $\PP^1$-suspension functor} $\Sigma^\infty_{\PP^1}(-):\sH_\bullet(k)\to \SH(k)$,
\[
\Sigma^\infty_{\PP^1}(\sX):=(\sX, \Sigma_{\P^1}\sX,\ldots, \Sigma^n_{\P^1}(\sX),\ldots),\ \epsilon_n=\id: \Sigma_{\P^1}(\Sigma^n_{\P^1}(\sX))\to  \Sigma^{n+1}_{\P^1}(\sX),
\]
and  suspension functors 
\[
\Sigma^n_{S^1}, \Sigma_{\G_m}^n, \Sigma_{\PP^1}^n:\SH(k)\to \SH(k)
\]
for all $n\in \ZZ$. $\SH(k)$ is a triangulated tensor category with translation $\Sigma_{S^1}$,  tensor product $(\sE, \sF)\mapsto \sE\wedge \sF$ and unit the motivic sphere spectrum $\bS_k:=\Sigma^\infty_{\PP^1}\Spec k_+$.  We write $\Sigma^{n,m}$ for $\Sigma^{n-m}_{S^1}\circ \Sigma_{\G_m}^m$ and note that $\Sigma_{\PP^1}=\Sigma^{2,1}$. An object $\sE$ of $\SH(k)$ and integers $n,m$ define the $\sE$-cohomology functor $\sE^{n,m}$ from $\Sm_k^\op$ to the category of abelian groups by
\[
\sE^{n,m}(X):=\Hom_{\SH(k)}(\Sigma^\infty_{\PP^1} X_+, \Sigma^{n,m} \sE).
\]
We denote $\bigoplus_{n,m}\sE^{n,m}(X)$ by $\sE{\bideg}(X)$,  $\bigoplus_n\sE^{2n,n}(X)$ by $\sE^*(X)$. We also have the $\sE$-homology $\sE_{a,b}(X):=\Hom_{\SH(k)}(\Sigma^{a,b}\bS_k,   \sE\wedge\Sigma^\infty_{\PP^1}X_+)$, and its $(2n,n)$-part $\sE_n(X):=\sE_{2n,n}(X)$, $\sE_*(X):=\oplus_n\sE_n(X)$. We often write $\sE^{a,b}$ for $\sE^{a,b}(\Spec k)$,  $\sE^n$ for $\sE^n(\Spec k)$, 
$\sE_{a,b}$ for $\sE_{a,b}(\Spec k)$, etc.  

{We have the presheaf on $\Sm_k$,
\[
U\mapsto \sE^{-a, -b}(U)
\]
giving as associated Nisnevich sheaf on $\Sm_k$ the bi-graded homotopy sheaf $\pi_{a,b}(\sE)$. Reflecting the homotopy $t$-structure on $\SH(k)$ \cite{MorelICTP}, we often reindex this sheaf by $\pi_n(\sE)_m:=\pi_{n-m,-m}(\sE)$. $\sE$ is said to be {\em $a-1$-connective}  if $\pi_n(\sE)_m=0$ for $n<a$ and all $m\in \Z$. We note that  $\pi_{a,b}(\sE)(k)=\sE_{a,b}=\sE^{-a,-b}$.}

 \subsection{$\MGL$ and $\MSL$}\label{subsec:MGLMSL}

We collect some facts about our main players; the main references are \cite{PPR2, PW}. For $n\ge0$, we have the object $\BGL_n\in \sH(k)$. There are various models for $\BGL_n$ in $\Spc(k)$, we will most often use the construction as an infinite Grassmannian
\[
\BGL_n=\colim_m\Gr(n, n+m)
\]
with respect to the usual embeddings $\Gr(n, n+m)\to \Gr(n, n+m+1)$. We have the tautological $n$-plane bundle $\sV_{n,n+m}\to \Gr(n, n+m)$, taking the colimit in $\Spc(k)$ gives $\sV_n\to \BGL_n$ and we have the maps $i_n:\BGL_n\to \BGL_{n+1}$ with $i_n^*\sV_{n+1}\cong \sV_n\oplus \sO$.  The Thom spectrum $\MGL$ is built from the sequence $(\MGL_0, \MGL_1,\ldots)$, with $\MGL_n$ the Thom space $\Th(\sV_n):=\P(\sV_n\oplus \sO)/\P(\sV_n)$. The isomorphism   
$i_n^*\sV_{n+1}\cong \sV_n\oplus \sO$ gives the isomorphism $\Th(i_n^*\sV_{n+1})\cong \Sigma_{\P^1}\Th(\sV_n)$, which together with the evident map $\Th(i_n^*\sV_{n+1})\to \Th(\sV_{n+1})$ gives the bonding maps $\Sigma_{\P^1}\MGL_n\to \MGL_{n+1}$ defining the $\P^1$-spectrum $\MGL$.

$\MSL$ is defined similarly: Let $\sO_n(-1)\to \BGL_n$ be the determinant bundle $\det\sV_n$. Then $p_n:\BSL_n\to \BGL_n$ is defined as the associated $\G_m$-bundle $\sO_n(-1)\setminus 0_{\sO_n(-1)}$. We have the bundle $\tilde\sV_n=p_n^*\sV_n\to \BSL_n$ and $\MSL_n:=\Th(\tilde\sV_n)$. The $\P^1$-spectrum $\MSL$ is $(\MSL_0, \MSL_1,\ldots)$ with bonding maps $\Sigma_{\P^1}\MSL_n\to \MSL_{n+1}$ defined as for $\MGL$.  It follows directly from these descriptions that both $\MGL$ and $\MSL$ can also be described as
\[
\MGL=\colim_n\Sigma^{-n}_{\P^1}\Sigma^\infty_{\P^1}\MGL_n;\quad 
\MSL=\colim_n\Sigma^{-n}_{\P^1}\Sigma^\infty_{\P^1}\MSL_n
\]
and the projections $\BSL_n\to \BGL_n$ give rise to maps $\MSL_n\to \MGL_n$ and a canonical map of $\P^1$-spectra $\MSL\to \MGL$.

The object $\MGL$ of $\SH(k)$ has many useful properties. It has the structure of a commutative monoid in $\SH(k)$, that is $\MGL$ is a {\em motivic commutative ring spectrum} (see   \cite[\S 2.1]{PPR2}). Recall that an {\em orientation} on a motivic commutative ring spectrum $\sE\in \SH(k)$ is an element $\theta_\sE\in \sE^{2,1}(\P^\infty)$ such that the restriction of $\theta_\sE$ to $i_1^*\theta_\sE\in \sE^{2,1}(\P^1)=\sE^{0,0}(k)$ is the unit for the ring spectrum $\sE$.  As $\P^\infty=\BGL_1$, the 0-section of $\sV_1$ gives the map $\P^\infty\to  \MGL_1$, which after composition with $\MGL_1\to \Sigma_{\P^1}\MGL$ gives the map $\Sigma^\infty_{\P^1}\P^\infty\to \Sigma_{\P^1}\MGL$, that is, an element $\theta_{\MGL}\in \MGL^{2,1}(\P^\infty)$. It is shown in \cite[Theorem 2.7]{PPR2} that the motivic commutative ring spectrum $\MGL$ with the element $\theta_{\MGL}\in \MGL^{2,1}(\P^\infty)$ gives the universal oriented motivic ring spectrum in $\SH(k)$. 

As $\P^\infty$ represents the functor $\Pic$ in $\sH(k)$ \cite[Proposition 3.8]{MorelVoev}, an orientation $\theta_\sE\in \sE^{2,1}(\P^\infty)$ gives rise to a theory of first Chern classes of line bundles by
\[
L\to X\leadsto \phi_L:X\to \P^\infty\leadsto c_1^\sE(L):=\phi_L^*(\theta_\sE)\in \sE^{2,1}(X).
\]
This in turn gives the associated formal group law $F_{\sE}(u,v)\in \sE^*(k)[[u,v]]$ with 
$F_{\sE, \theta}(c_1^\sE(L), c_1^\sE(M))=c_1^\sE(L\otimes M)\in \sE^{2,1}(X)$ for $L, M$ line bundles on some $X\in \Sm_k$.  

We recall the {\em Lazard ring} $\Laz$, which is the coefficient ring of the universal formal group law $F_{univ}(u,v)\in \Laz[[u,v]]$. Each formal group law $F\in R[[u,v]]$ gives rise to a unique ring homomorphism $\phi_{F,R}:\Laz\to R$ with $F=\phi_{F,R}(F_{univ})$ and thus we have the canonical homomorphism of graded rings
\[
\phi_\sE:\Laz\to\sE^*(k)
\]
classifying $F_{\sE}$. For $\sE=\MGL$ we have the theorem of Hopkins-Morel-Hoyois and Spitzweck

\begin{theorem}[\hbox{\cite{Hoy, Spitz}}]\label{thm:HMH} After inverting the exponential characteristic $p$ of $k$, the ring homomorphism $\phi_{\MGL}:\Laz\to \MGL^*(k)$ is an isomorphism. Moreover $\MGL^{2n+a, n}[1/p]=0$ for all $a>0$ 
\end{theorem} 

\begin{proof} The main results of \cite{Hoy, Spitz} give a strongly convergent spectral sequence
\[
E_2^{r,s}(n)=H^{r-s, n-s}(k, \Z)\otimes \Laz^{s}\Rightarrow \MGL^{r+s, n},
\]
after inverting $p$. The properties of motivic cohomology given by   Theorem~\ref{Thm19.3} below yield the partial computation of the $E_2^{r,s}(n)$-terms as
\[
E_2^{r,s}(n)=\begin{cases}0&\text{ for }r+s>2n\\
0&\text{ for }r+s=2n, r\neq s\\
0&\text{ for }r+s=2n-1, (r,s)\neq (n, n-1)\\
\Laz^n&\text{ for }r=s=n
\end{cases}
\]
Feeding this into the spectral sequence gives $\MGL^{2n+a,n}=0$ for $a>0$ and $\MGL^{2n,n}=\Laz^n$ for all $n$ (note that $\Laz=\oplus_{n\le 0}\Laz^n$). 
\end{proof}

We recall the fundamental connectivity theory of Morel in the $\P^1$-stable setting. A pointed space $\sX\in \Spc_\bullet(k)$ is called $a-1$-connected if each Nisnevich stalk $\sX_x\in \Spc_\bullet$, $x\in X\in \Sm_k$ is an $a-1$-connected space. 

\begin{theorem}[\hbox{\cite[Theorem 3]{MorelStable}}] Let $\sX\in \Spc_\bullet(k)$ be $a-1$-connected. Then the suspension spectrum $\Sigma^\infty_{\P^1}\sX\in \SH(k)$ is $a-1$-connected, that is $\pi_n(\Sigma^\infty_{\P^1}\sX)_*=0$ for $n<a$. 
\end{theorem} 
In particular, taking $\sX=X_+$ for some $X\in \Sm_k$, since $X_+\in \Spc_\bullet(k)$ is -1-connected,  we see that  $\Sigma^\infty_{\P^1}X_+$ is a -1-connected $\P^1$-spectrum.

\begin{corollary} Both $\MGL$ and $\MSL$ are -1-connected $\P^1$-spectra.
\end{corollary}

\begin{proof} Write $\Gr(n,n+m)$ as a finite union of open subschemes $U_i$ such that $\sE_{n, m|U_i}\to U_i$ is a trivial bundle $\sO_{U_i}^n$. For $U\subset \Gr(n,n+m)$ that trivializes $\sE_{n,m}$, we have  $\Th(\sE_{n,m|U})\cong \Th(\sO_U^n)=\Sigma_{\P^1}^nU_{+}\cong S^n\wedge\G_m^{\wedge n}\wedge U_{+}$, we see that the space $\Th(\sO_{U}^n)$ is $n-1$-connected, and thus $\Sigma^\infty_{\P^1}\Th(\sO_{U}^n)\in \SH(k)$ is also $n-1$-connected. The decomposition of $\Th(\sE_{n,m})$ as  a union of the $\Th(\sO_{U_i}^n)$ and a Mayer-Vietoris argument shows by  induction that 
 $\Sigma^\infty_{\P^1}\Th(\sE_{n,m|\cup_{i=1}^rU_i})$  is  $n-1$-connected for every $r$, and thus $\Sigma^\infty_{\P^1}\Th(\sE_{n,m})$ is itself $n-1$-connected. As the collection of $n-1$-connected spectra is closed under filtered colimits, we see that $\Sigma^\infty_{\P^1}\Th(\sE_n)$ is $n-1$-connected and thus $\Sigma_{\P^1}^{-n}\Sigma^\infty_{\P^1}\Th(\sE_n)=\Sigma^{-2n,-n}\wedge  \Sigma^\infty_{\P^1}\Th(\sE_n)$ is $-1$ connected. Again taking a filtered colimit, we find that $\MGL=\colim_n\Sigma_{\P^1}^{-n}\Sigma^\infty_{\P^1}\Th(\sE_n)$ is -1-connected.
 
The proof for $\MSL$ is exactly the same.
\end{proof}

We conclude this section with a computation of the motivic cohomology and the motives of $\MGL$ and $\MSL$. We recall the motivic cohomology spectrum $H\Z\in \SH(k)$. This is a $\P^1$-spectrum with a model as a commutative monoid in motivic  symmetric spectra (see \cite[Example 3.4]{DRO}), which allows one to define the model category of $H\Z$-modules $H\Z\lMod$. We denote the homotopy category of $H\Z\lMod$ by $\DM(k)$. The reader may be annoyed by this duplication of the standard notation for a version of Voevodsky's triangulated category of motives $\DM_{\text{V}}(k)$ \cite{VSF}. In fact, for $k$ of characteristic zero, R\"ondigs-{\O}stv{\ae}r have shown that $\DM(k)$ and
$\DM_{\text{V}}(k)$ are equivalent, and for $k$ of positive characteristic $p$, the analogous result has been  proven by Hoyois-Kelly-{\O}stv{\ae}r \cite[Theorem 5.8]{HSO}, after inverting $p$, so for our purposes, there should be no confusion. Forgetting the $H\Z$-module structure defines the  motivic {\em Eilenberg-MacLane  functor} $\EM:\DM(k)\to \SH(k)$, which has left adjoint the free $H\Z$-module functor $M:\SH(k)\to \DM(k)$, $M(\sE):=H\Z\wedge \sE$; for $\sE\in \SH(k)$, its {\em motive} is $M(\sE)\in \DM(k)$. We similarly have the motivic commutative ring spectrum $H\Z/l$. The {\em pure Tate motive of weight $n$} is $\Sigma^n_{\P^1}H\Z$ and the pure mod $l$ Tate motive of weight $n$ is 
$\Sigma^n_{\P^1}H\Z/l$. 

We mention the following partial computation of motivic cohomology
\begin{theorem}\cite[Corollary 4.2, Theorems 19.1, 19.3]{MVW}\label{Thm19.3}
For  $X$  in $\Sm_k$ and $A$ an abelian group, we have $H^{p, q}(X, A)=0$, if $p>q+\dim(X)$, if $p>2q$ or if $q<0$. Moreover $H^{p,0}(\Spec k,\Z)=\Z$ for $p=0$, $H^{p,0}(\Spec k,\Z)=0$ for $p\neq0$ and  $H^{p,1}(\Spec k, \Z)=0$ for $p\neq1$.
\end{theorem}

 As detailed in  \cite[\S 1.2]{PPR2}, for $(\sE, \theta_\sE)$ a motivic commutative ring spectrum with orientation $\theta_\sE$, one has  {\em Thom classes}  $\thom(V)\in \sE^{2r, r}(\Th(V))$ for each rank $r$ vector bundle $p:V\to X$, $X\in \Sm_k$, and cup product with $\thom(V)$ gives the {\em Thom isomorphism} $\vartheta_V:\sE^{a,b}(X)\to \sE^{2r+a, r+b}(\Th(V))$.  This is reflected in the isomorphism in $\SH(k)$, $\sE\wedge\Th(V)\cong \Sigma^{2r, r}\sE\wedge  X_+$, defined as the composition
\begin{multline*}
\sE\wedge \Th(V)\xrightarrow{\id\wedge\delta_V}\sE\wedge \Th(V)\wedge V_+
\xrightarrow{\id\wedge p}\sE\wedge \Th(V)\wedge X_+
\\\xrightarrow{\id\wedge \thom(V)\wedge\id}\sE\wedge S^{2r,r}\wedge \sE\wedge X_+\xrightarrow{\mu_\sE\wedge\id} S^{2r, r}\wedge \sE\wedge X_+,
\end{multline*}
which in turn gives the  $\sE$-homology isomorphism
\[
\sE_{2r+*, r+*}(\Th(V))\cong \sE_{**}(X).
\]
We recall from \cite[\S 4]{Voev03} that motivic cohomology is an oriented theory.

A smooth dimension $d$ $k$-scheme $X$ is {\em cellular} if $X$ admits a filtration by closed subsets $F_nX\subset X$ 
\[
X=F^0X\supset F^1X\supset\ldots\supset F^dX\supset F^{d+1}X=\emptyset
\]
with $F^nX\setminus F^{n+1}X$ a disjoint union of $s_n$ copies of $\A^{d-n}$. 

\begin{lemma}\label{lem:FilDecom} Let $\sE\in \SH(k)$ be an oriented  motivic commutative ring spectrum that has a model as a commutative monoid in symmetric $\P^1$-spectra and let $X$ be a cellular $k$-scheme with filtration as above. Suppose $\pi_{a,b}\sE=0$ for $b>0$. Then in $\Ho\sE\lMod$, we have a canonical isomorphism
\[
\sE\wedge \Sigma^\infty_{\PP^1}X_+\cong \oplus_{n=0}^d \Sigma^{2n,n}\sE^{\oplus s_n}.
\]
\end{lemma}
 
\begin{proof} Letting $N_n$ be the (rank $n$) normal bundle of $F^n\setminus F^{n+1}$ in $X\setminus F^{n+1}$,  the Morel-Voevodsky purity isomorphism $(X\setminus F^{n+1})/(X\setminus F^n)\cong \Th(N_n)$ and the Thom isomorphism $\pi_{a,b}(\sE\wedge(\Th(N_n)))\cong \pi_{a-2n,b-n}(\sE\wedge(F^n\setminus F^{n+1}))$ gives the canonical isomorphism
\[
\pi_{a,b}(\sE\wedge (X\setminus F^{n+1})/(X\setminus F^n))\cong \pi_{a-2n, b-n}(\sE)^{\oplus s_n}. 
\]
In particular, for $b>n$, $\sE_{a,b}((X\setminus F^{n+1})/(X\setminus F^n))=0$.
Taking $a=2n, b=n$ gives the $s_n$ classes $\bar\alpha_{n,i}\in \pi_{2n,n}(\sE\wedge (X\setminus F^{n+1})/(X\setminus F^n))(k)$ corresponding to the unit in $\sE^{0,0}(k)$ for the $i$th component of $F_n\setminus F_{n+1}=\amalg_{i=1}^{s_n}\A^{d-n}$.

For each $m$ we have the cofiber sequence
\[
(X\setminus F^{m+1})/(X\setminus F^m)\to X/(X\setminus F^m)\to X/(X\setminus F^{m+1})
\]
which induces the long exact sequence of homotopy sheaves
\begin{multline*}
\ldots\to \pi_{a,b}(\sE\wedge(X\setminus F^{m+1})/(X\setminus F^m))\to 
 \pi_{a,b}(\sE\wedge X/(X\setminus F^m))\\\to  \pi_{a,b}(\sE\wedge X/(X\setminus F^{m+1}))\xrightarrow{\delta} 
\pi_{a-1,b}(\sE\wedge (X\setminus F^{m+1})/(X\setminus F^m))\to \ldots
\end{multline*}
Let $\alpha^*_{n,i}\in \pi_{2n,n}(\sE\wedge X/(X\setminus F^n))(k)$ be the image of $\bar\alpha_{n,i}$ under the map induced by $\sE\wedge (X\setminus F^{n+1})/(X\setminus F^n)\to \sE\wedge X/(X\setminus F^n)$. 

As above, we have $\pi_{a,n}(\sE\wedge (X\setminus F^{m+1})/(X\setminus F^m))=0$ for all $m<n$ and all $a$. The long exact sequence of homotopy sheaves shows that the  maps
\[
\pi_{2n,n}(\sE\wedge X_+)\to \pi_{2n,n}(\sE\wedge X/(X\setminus F^1))\to\ldots\to 
\pi_{2n,n}(\sE\wedge X/(X\setminus F^{n-1}))\to 
\pi_{2n,n}(\sE\wedge X/(X\setminus F^n))
\]
are all isomorphisms. Thus, we can uniquely lift the classes $\alpha^*_{n,i}$ to $\alpha_{n,i}\in  \pi_{2n,n}(\sE\wedge X_+)(k)$. 

Using the multiplication on $\sE$, each element $\alpha_{n,i}\in  \pi_{2n,n}(\sE\wedge X_+)(k)$ gives a map of sheaves
\[
\pi_{*-2n,*-n}(\sE)\to \pi_{*, *}(\sE\wedge X_+)
\]
which is a map of bi-graded sheaves of $\pi_{*,*}(\sE)$-modules. This gives the map of the free bi-graded $\pi_{*,*}(\sE)$-modules $\theta_X:\oplus_{n}\pi_{*-2n,*-n}(\sE)^{s_n}\to \pi_{*, *}(\sE\wedge X_+)$. 

The map $\theta_X$ is clearly an isomorphism if $F^1=\emptyset$, in other words, $\theta_{X\setminus F^1}$ is an isomorphism.  Assuming that $\theta_{X\setminus F^j}$ is an isomorphism, we  use the exact sequences of homotopy sheaves as above  to show that $\theta_{X\setminus F^{j+1}}$ is an isomorphism, and thus $\theta_X$ is an isomorphism by induction. In other words, the family $(\alpha_{n,i})$ gives a basis for the sheaf $\pi_{*,*}(\sE\wedge X_+)$ as a free $\pi_{*,*}(\sE)$-module. 

Viewing each $\alpha_{n,i}$ as a map $\alpha_{n,i}:\Sigma^{2n, n}\bS_k\to \sE\wedge X_+$, we extend to the $\sE$-module map $\alpha_{n,i}: \Sigma^{2n,n}\sE\to \sE\wedge X_+$ using the multiplication in $\sE$. As the map on $\pi_{*,*}$ induced by 
\[
\sum_{n,i}\alpha_{n,i}:\oplus_n \Sigma^{2n, n}\sE^{\oplus s_n}\to \sE\wedge X_+
\]
is the map $\theta_X$,    $\sum_{n,i}\alpha_{n,i}$ is an isomorphism in $\Ho\sE\lMod$.
\end{proof}

\begin{remark} Since the isomorphism class of $\sE\wedge\Th(N_n)$ in $\SH(F^n\setminus F^{n+1})$  depends only on the class  of $N_n$ in $K_0(F^n\setminus F^{n+1}))$ (see \cite[Th\'eor\`em 1.5.18]{Ayoub}, \cite[Prop. 4.1.1]{RiouRR}) and this class is the same as the trivial bundle of rank $n$,   the conclusion of Lemma~\ref{lem:FilDecom} also holds without assuming the $\sE$ is oriented, except that the isomorphism is no longer canonical.
\end{remark}

We consider the graded polynomial ring $\Z[b_1, b_2,\ldots]$ with $b_n$ in degree $-n$; we may also consider $\Z[b_1, b_2,\ldots]$ as bi-graded with $b_n$ in bi-degree $(-2n,-n)$. We let 
$\Z[c_1, c_2,\ldots]$ denote the graded polynomial ring with $c_n$ in degree $n$ (or in bi-degree $(2n,n)$). Viewing $c_n$ as the $n$th elementary symmetric function in variables $\xi_1, \xi_2,\ldots$, we have for $I=(i_1,\ldots, i_n)\in \N^n$, the $I$th Conner-Floyd class $c_I$ corresponding to the symmetric function $\sigma_I$, which in turn is defined by the identity
\[
\prod_{i,j}(1+\xi_i^jT_j)=1+\sum_I\sigma_I(\xi_1, \xi_2,\ldots)T^I.
\]
Giving $\Z[c_1, c_2,\ldots]$ the coproduct $\delta(c_n)=\sum_{i=0}^n c_i\otimes c_{n-i}$ (with $c_0=1$), we have $\delta(c_K)=\sum_{I+J=K}c_I\otimes c_J$, so we may identify $\Z[c_1, c_2,\ldots]$  as the (homogeneous) dual co-algebra to the polynomial algebra $\Z[b_1, b_2,\ldots]$, where a monomial $b^I:=b_1^{i_1} \cdots b_n^{i_n}$ is dual to $c_I$. For an index $I=(i_1,\ldots, i_n)$ we let $|I|$ denote the weighted degree, $|I|:=\sum_jj\cdot i_j$, so $b^I$ is in bi-degree $(-2|I|, -|I|)$. 

The $i$th Chern classes $c_i(\sV_{n,m})\in H^{2i,i}(\Gr(n, n+m))$  are compatible with respect to pull-back by the inclusions $\Gr(n, n+m)\to \Gr(n, n+m+1)$; it is shown in \cite[Theorem 2.2]{PPR2} that $(c_i(\sV_{n,m}))_m\in \lim_m H^{2i,i}(\Gr(n, n+m))$ gives rise to a unique element $c_i(\sV_n)\in H^{2i,i}(\BGL_n)$ and then to a unique element $c_n\in H^{2i,i}(\BGL)$; together these define an isomorphism of $H^{*,*}(k)[c_1, c_2,\ldots]\cong H^{*,*}(\BGL)$. 

For $\sE\in \SH(k)$, and a free graded $\Z$-module $\Z\cdot b$ with generator $b$ in degree $m$, we set $\Z\cdot b\otimes_\Z\sE:=\Sigma^{-m}_{\P^1}\sE$ and extend this notation in the obvious way to define $M_*\otimes_\Z\sE$ for $M_*=\oplus_{n\in\Z}M_n$ a direct sum of free graded $\Z$-modules. For example,  we have the pure Tate motive $M_*\otimes_\Z H\Z$  and the pure mod $l$ Tate motive $M_*\otimes H\Z/l$.

\begin{theorem}\label{thm:MGLMSLCohHom}  Let $\sE\in \SH(k)$ be an oriented motivic commutative ring spectrum such that $\sE^{a,b}(k)=0$ for $b<0$. Then\\[5pt]
1. $\sE^{**}(\MGL)$ and $\sE^{**}(\MSL)$ are the $\sE^{**}(k)$-modules
\[
\sE^{**}(\MGL)=\Z[c_1, c_2,\ldots]\otimes_\Z \sE^{**}(k),\ 
\sE^{**}(\MSL)=\Z[c_1, c_2,\ldots]/(c_1)\otimes_\Z \sE^{**}(k).
\]
2.  $\sE\wedge \MGL\cong \Z[b_1, b_2,\ldots]\otimes \sE$ and  $\sE\wedge \MSL\cong  (c_1)^\perp\otimes \sE$.\\
where $(c_1)^\perp\subset  \Z[b_1, b_2,\ldots]$ is the $\Z$-free summand of $\Z[b_1, b_2,\ldots]$ killed by the ideal $(c_1)\subset \Z[c_1, c_2,\ldots]$.
\end{theorem} 

\begin{proof}  We first show that
\[
\sE^{**}(\BGL)=\Z[c_1, c_2,\ldots]\otimes_\Z \sE^{**}(k),\ 
\sE^{**}(\BSL)=\Z[c_1, c_2,\ldots]/(c_1)\otimes_\Z \sE^{**}(k).
\]
$\sE\wedge \BGL\cong \Z[b_1, b_2,\ldots]\otimes \sE$ and $\sE\wedge \BSL\cong  (c_1)^\perp\otimes \sE$.

Using the standard filtration of $\Gr(n,m+n)$ by the Schubert varieties, it follows from Lemma~\ref{lem:FilDecom} that $\sE\wedge\Gr(n,m+n)\cong \oplus_{i=0}^{nm}\Sigma^{2i,i}\sE^{\oplus s_{i, n,m}}$ for suitable integers $s_{i, n,m}$. By choosing a suitable flag of subspaces of $\A^{m+n}$ and $\A^{n+m+1}$ to define the Schubert filtrations, we may assume that   $\Gr(n,m+n)\cap F^i\Gr(n, m+n+1)=F^i\Gr(n,m+n)$. This shows that the map $\id\wedge \sE\wedge\Gr(n,m+n)\to \sE\wedge\Gr(n,m+n+1)$ induces an inclusion
$\oplus_{i=0}^{nm}\Sigma^{2i,i}\sE^{\oplus s_{i, n,m}}\to \oplus_{i=0}^{n(m+1)}\Sigma^{2i,i}\sE^{\oplus s_{i, n,m+1}}$ as a summand. Noting that the integers $s_{i, n,m}$ are eventually constant in $n$, and taking   the colimit, we get
\[
\sE\wedge\BGL_n\cong \oplus_{i=0}^\infty\Sigma^{2i,i}\sE^{\oplus s_{i, n}}
\]
with $s_{i,n}$ the rank of the degree $i$ part of $\Z[c_1, c_2,\ldots, c_n]$.

By  \cite[Theorem 2.2]{PPR2}, $\sE^{*,*}(\BGL_n)\cong \sE^{*,*}(k)[[c_1,c_2,\ldots, c_n]]_h$ and $\sE^{*,*}(\BGL)\cong \sE^{*,*}(k)[[c_1,c_2,\ldots]]_h$, where $\sE^{*,*}(k)[[c_1,c_2,\ldots]]_h\subset \sE^{*,*}(k)[[c_1,c_2,\ldots]]$ is the $\sE^{*,*}(k)$ submodule generated by the bi-homogeneous elements and $\sE^{*,*}(k)[[c_1,c_2,\ldots, c_n]]_h$ is defined similarly; the fact that $\sE^{a,b}(k)=0$ for $b<0$ says that the bi-homogeneous elements in $\sE^{*,*}(k)[c_1,c_2,\ldots]$ and $\sE^{*,*}(k)[[c_1,c_2,\ldots]]$ agree, so $\sE^{*,*}(\BGL)=\sE^{*,*}(k)[c_1,c_2,\ldots]$ and 
$\sE^{*,*}(\BGL_n)\cong \sE^{*,*}(k)[c_1,c_2,\ldots, c_n]$.

As 
\[
\sE^{*,*}(\BGL_n)=\Hom_{\Ho\sE\lMod}(\oplus_{i=0}^\infty\Sigma^{2i,i}\sE^{\oplus s_{i, n}}, \Sigma^{**}\sE)=\prod_{i=0}^\infty \sE^{*-2i, *-i}(k)^{s_{i,n}}
\]
we see that  a homogeneous $\Z$-basis for the dual  $\Z[b_1, b_2,\ldots]^{(c_{n+1}, c_{n+2},\ldots)\perp}\subset \Z[b_1, b_2,\ldots]$ to $\Z[c_1,c_2,\ldots, c_n]$ furnishes a $\sE$-basis for $\sE\wedge\BGL_n$ in $\Ho\sE\lMod$, that is, 
\[
\sE\wedge\BGL_n\cong \Z[b_1, b_2,\ldots]^{(c_{n+1}, c_{n+2},\ldots)\perp}\otimes \sE.
\]
Taking the colimit over $n$ gives the isomorphism 
\[
\sE\wedge\BGL\cong \Z[b_1, b_2,\ldots]\otimes \sE.
\]
The Whitney formula for the Chern classes imply dually that the maps $\BGL_n\times \BGL_{n'}\to \BGL_{n+n'}$ classifying $p_1^*\sV_n\oplus p_2^*\sV_{n'}$  induces a product on  $\sE_{**}(\BGL)$ 
 corresponding via this isomorphism to the multiplication in $\Z[b_1, b_2,\ldots]$. 

For $\BSL$, we recall that  $\BSL_n$ is the complement  of the 0-section in the line bundle $\sO_n(-1)=\det\sV_n$, so we have the localization sequence
\[
\ldots\to \sE^{a,b}(\Th(\sO_n(-1))\xrightarrow{\alpha} \sE^{a,b}(\sO_n(-1))\to \sE^{a,b}(\BSL_n)\xrightarrow{\delta}  
\sE^{a+1,b}(\Th(\sO_n(-1))\to \ldots
\]
By homotopy invariance, $\sE^{a,b}(\sO_n(-1))=\sE^{a,b}(\BGL_n)$, and the Thom isomorphism identifies $\sE^{a,b}(\Th(\sO_n(-1))\xrightarrow{\alpha} 
\sE^{a,b}(\sO_n(-1))$ with $\times c_1:\sE^{a-2,b-1}(\BGL_n)\to \sE^{a,b}(\BGL_n)$. This gives
\[
\sE^{*,*}(\BSL_n)\cong \sE^{*,*}(k)[c_1, c_2,\ldots, c_n]/(c_1)\cong \sE^{*,*}(k)[c_2,\ldots, c_n] 
\]
and thus 
\[
\sE^{*,*}(\BSL)\cong \sE^{*,*}(k)[c_1, c_2,\ldots]/(c_1)\cong \sE^{*,*}(k)[c_2,\ldots] 
\]

Just as for $\BGL_n$, taking the colimit of the homotopy cofiber sequences $\sO_n(1)\setminus 0_{\sO_n(1)}\to \sO_n(1)\to \Th(\sO_n(1))$, applying $\sE\wedge-$ and using the Thom isomorphism gives a homotopy cofiber sequence
\[
\sE\wedge\BSL_n\to\sE\wedge\BGL_n \xrightarrow{\alpha_n} \Sigma^{2,1}\sE\wedge\BGL_n
\]
Passing to the colimit over $n$ gives the homotopy cofiber sequence
\[
\sE\wedge\BSL\to\sE\wedge\BGL \xrightarrow{\alpha} \Sigma^{2,1}\sE\wedge\BGL
\]

Inserting our computation of $\sE\wedge\BGL$, we have 
\[
\alpha:\Z[b_1, b_2,\ldots]\otimes \sE\to \Z[b_1, b_2,\ldots]\otimes \Sigma^{2,1}\sE
\]
Applying $\Hom_{\Ho-\sE\lMod}(-, \Sigma^{**}\sE)$ gives
\[
\alpha^*:\Z[c_1, c_2,\ldots]\otimes_\Z\sE^{*, *}(k)\to \Z[c_1, c_2,\ldots]\otimes_\Z\sE^{*, *}(k);
\]
by the very definition of $c_1$, $\alpha^*$ is multiplication by $c_1$. The split injective map $\times c_1:\Z[c_1, c_2,\ldots]_{d-1}\to \Z[c_1, c_2,\ldots]_d$  is the $\Z$-dual of a split surjection 
$\alpha_*:\Z[b_1, b_2,\ldots]_{-d}\to \Z[b_1, b_2,\ldots]_{1-d}$ with kernel the free $\Z$-module $[(c_1)^\perp]_{-d}$, where the subscript $m$ mean the degree $m$ part. Thus the map $\alpha$ splits in $\Ho-\sE\lMod$ and defines an isomorphism $\sE\wedge\BSL\cong (c_1)^\perp\otimes_\Z \sE$.

For each $n>0$, we have the Thom isomorphism $\sE^{*,*}(\BGL_n)\to \sE^{2n+*, n+*}(\Th(\sV_n))=\sE^{2n+*, n+*}\MGL_n$,  and thus an isomorphism
\[
th_n: \sE^{*,*}(\BGL_n)\to \sE^{*,*}\Sigma^{-2n,-n}\MGL_n
\]
Recalling that $\MGL=\colim_n\Sigma^{-2n,-n}\MGL_n$, it is shown in   \cite[Lemma 2.5]{PPR2}, that the maps  $th_n$ fit together to define an isomorphism $\sE{\bideg}(\BGL)\xrightarrow{\sim} \sE{\bideg}(\MGL)$. Similarly, the Thom isomorphisms $\sE_{**}(\BGL_n)\to \sE_{2n+*, n+*}(\MGL_n)$ fit together to give the isomorphism $\sE_{**}(\MGL)=\sE_{**}(k)[b_1, b_2,\ldots]$, or more generally, $\pi_{**}(\sE\wedge\MGL)\cong \pi_{**}(\sE)[b_1, b_2,\ldots]$. Arguing as in the proof of Lemma~\ref{lem:FilDecom}, we arrive at the isomorphism in $\Ho\sE\lMod$
\[
\sE\wedge\MGL\cong \Z[b_1, b_2,\ldots]\otimes\sE.
\]

Applying the Thom isomorphism to the pullback $\tilde\sV_n\to \BSL_n$  of $\sV_n$  gives the Thom isomorphism $\sE{\bideg}(\BSL)\xrightarrow{\sim} \sE{\bideg}(\MSL)$. 
Just as for $\MGL$, the Thom isomorphisms $\sE^{*,*}(\BSL_n)\cong \sE^{2n+*, n+*}(\MSL_n)$ fit together to give $\sE^{**}(\MSL)\cong \sE^{*,*}(\BSL)$ and $\sE\wedge\MSL\cong \sE\wedge\BGL$.
\end{proof}

\begin{corollary}\label{cor:MGLMSLCohHom} Let $\sE\in \SH(k)$ be an oriented motivic commutative ring spectrum such that $\sE^{a,b}(k)=0$ for $b<0$.  Then for $M=\MGL, \MSL$ and for $m\ge1$ an integer, the natural map
\[
 \sE^{**}(M)^{\otimes_{\sE^{**}(k)} m}\to \sE^{**}(M^{\wedge m})
\]
is an isomorphism.
\end{corollary}

\begin{proof} By Theorem~\ref{thm:MGLMSLCohHom}(2), we have
\[
\sE\wedge (M^{\wedge m})\cong (\sE\wedge M)^{\wedge_{\sE} m}\cong
\oplus_i \Sigma^{2n_i, n_i}\sE
\]
for a suitable sequence of non-negative integers $n_i$. Moreover, for fixed $n$, there are only finitely many indices $i$ for which $n_i=n$. 

We have
\[
\sE^{a,b}(M^{\wedge m})=\Hom_{\Ho\sE\lMod}(\oplus_i\Sigma^{2n_i,n_i}\sE, \Sigma^{a, b}\sE)=\prod_i \sE^{a-2n_i, b-n_i}(k) =\oplus_i \sE^{a-2n_i, b-n_i}(k) 
\]
the last identity following from the vanishing $\sE^{a,b}(k)=0$ for  $b<0$ and our assertion on the sequence $\{n_i\}$. If we write
\[
\sE\wedge M\cong \oplus_j \Sigma^{2m_j, m_j}\sE
\]
then comparing $(\oplus_j \Sigma^{2m_j, m_j}\sE)^{\wedge_{\sE}m}$ and $\oplus_i \Sigma^{2n_i, n_i}\sE$ gives the isomorphism, induced by the product in $\sE$-cohomology
\[
 \sE^{**}(M)^{\otimes_{\sE^{**}(k)} m}\to \sE^{**}(M^{\wedge m})
\]
\end{proof}

\begin{remark}\label{rem:Pairing}  For $\sE\in \SH(k)$ a motivic commutative ring spectrum with multiplication $\mu_\sE:\sE\wedge\sE\to \sE$,  and $\sF\in \SH(k)$ arbitrary, we have the pairing 
\[
\sE_{a,b}(\sF)\otimes_{\sE^{0,0}(k)}\sE^{a,b}(\sF)\to \sE^{0,0}(k)
\]
defined by sending  $\alpha:\Sigma^{a,b}\mS_k\to \sE\wedge \sF$ and $\beta:\sF\to \Sigma^{a,b}\sE$ to $\mu_\sE\circ (\id_\sE\wedge\beta)\circ \alpha$. 

Recall that for an index $I=(i_1,\ldots,i _n)$, we set $b^I:=b_1^{i_1}\cdots b_n^{i_n}$ and $|I|:=\sum_jj\cdot i_j$. Via the isomorphism $H\Z\wedge\MGL\cong \Z[b_1, b_2,\ldots]\otimes H\Z$, we may view each monomial $b^I$ as an $H\Z$-module map $b^I:\Sigma^{2|I|, |I|}H\Z\to H\Z\wedge\MGL$, or equivalently, as a map $b^I:\Sigma^{2|I|, |I|}\mS_k\to H\Z\wedge\MGL$ in $\SH(k)$, that is, $b^I\in H\Z_{2|I|, |I|}(\MGL)$. By our construction, the pairing $\Z[c_1, c_2,\ldots]\times \Z[b_1, b_2,\ldots]\to \Z$ with the Conner-Floyd basis $\{c_I\}$ for  $\Z[c_1, c_2,\ldots]$ dual to the monomial basis $\{b^I\}$ of $\Z[b_1, b_2,\ldots]$ is compatible with the pairing $H\Z_{a,b}(\MGL)\times H\Z^{a,b}(\MGL)\to H\Z^{0,0}(k)=\Z$ defined in the previous paragraph. This gives us an interpretation of the generators $b_1, b_2, \ldots$ as coming from $H\Z_{2*,*}(\P^\infty)$, which we proceed to describe.

The orientation $\thom_{\MGL}\in \MGL^{2,1}(\P^\infty)$, viewed as a map $\thom_{\MGL}:\Sigma^\infty_{\P^1}\P^\infty\to \Sigma^{2,1}\MGL$, gives rise to a map  on $H\Z$-homology
\[
\thom_{\MGL*}:H\Z_{2n+2,n+1}(\P^\infty)\to H\Z_{2n,n}(\MGL).
\]
Arguing as in the proof of Theorem~\ref{thm:MGLMSLCohHom}, the isomorphism $H\Z^{2*,*}(\P^\infty)=\Z[\xi]$, with $\xi=c_1(\sO_{\P^\infty}(-1))$,  gives us elements $\beta_i\in H\Z_{2i,i}(\P^\infty)$, that is $\beta_i:\Sigma^{2i, i}\mS_k\to H\Z\wedge\P^\infty$, with $\<\xi^j,\beta_i\>=\delta_{ij}$, and a corresponding decomposition $H\Z\wedge\P^\infty=\oplus_{i\ge0}\Sigma^{2i,i}H\Z\cdot \beta_i$. As  $\xi^{i+1}=\thom_{\MGL}^*c_1^i$, we have dually $\thom_{\MGL*}(\beta_{i+1})=b_i$. Indeed,  we have $\thom_{\MGL}^*c_I=0$ if $I=(i_1, \ldots, i_r)$ and $i_j>1$ for some $j$, and $\thom_{\MGL}^*c_I=\thom_{\MGL}^*c_1^j$ if $i_j=1$ and $i_l=0$ for all $l\neq j$. 
See for example \cite[Part I, \S3, 4]{Adams74} for details in the topological setting, which is exactly parallel to the motivic one.
\end{remark}

\section{The $\MSL$-class of a Calabi-Yau variety}
\label{sec:CY}

For $X$ a smooth projective scheme of dimension $d$ over our base-field $k$, there is a corresponding cobordism class $[X]_{\MGL}\in \MGL^{-2d, -d}(k)$.  Here we will recall the definition of $[X]_{\MGL}$ and describe how to lift the class $[X]_{\MGL}$ to a class $[X,\theta_X]_{\MSL}\in \MSL^{-2d, -d}(k)$, given a trivialization $\theta_X:\det T_{X/k}\to \sO_X$  of the determinant bundle $\det T_{X/k}:=\Lambda^{d_X}T_{X/k}$. This is just an algebraic version of the classical Pontryagin-Thom construction, relying on the six-functor formalism in $\SH(-)$, established by constructions of Voevodsky \cite[\S2]{Voev}, Ayoub \cite{Ayoub}, Cisinski-D\'eglise \cite{CD} and Hoyois \cite{Hoy6Op}, which we will briefly recall. 

Fix a base-scheme $B$ and let $\Sch_B$ be the category of quasi-projective $B$-schemes; for the remainder of this section, we suppress the mention of $B$ and speak of a scheme $X$ rather than a quasi-projective $B$-scheme and a morphism of schemes rather than a morphism in $\Sch_B$.  For $S$  a scheme, we let $\Sm_S\subset \Sch_S$ be the full subcategory of smooth (quasi-projective) $S$-schemes. Let $\mathbf{Tr}$ be the category of symmetric monoidal triangulated categories.  There is a functor $\SH(-):\Sch_B^\op\to \mathbf{Tr}$ with $\SH(S)$ the motivic stable homotopy category over $S$ and for $f:T\to S$ in $\Sch_B$, $\SH(f)$ is the pull-back $f^*:\SH(S)\to \SH(T)$. $f^*$ admits the right adjoint $f_*:\SH(T)\to \SH(S)$ and for $f$ smooth, $f^*$ has the left adjoint $f_\#:\SH(T)\to \SH(S)$. There is a canonical isomorphism $f_\#(1_T)$ with the $\P^1$-suspension spectrum  $\Sigma^\infty_{\P^1_S}T_+\in \SH(S)$. There is an additional adjoint pair $f_!:\SH(T)\xymatrix{\ar@<3pt>[r]&\ar@<3pt>[l]}\SH(S):f^!$ and a natural transformation $\eta_f:f_!\to f_*$, which is an isomorphism if $f$ is projective. 

Let $p:V\to X$ be a vector bundle over some scheme $X$ with zero-section $s:X\to V$. We have the endofunctors $\Sigma^V:=p_\#\circ s_!$ and $\Sigma^{-V}:=s^!\circ p^*$ of $\SH(X)$, with $\Sigma^V$ the left adjoint to $\Sigma^{-V}$. In fact, $\Sigma^V$ and $\Sigma^{-V}$ are inverse autoequivalences on $\SH(X)$.  The assignments $V\mapsto \Sigma^V$, $V\mapsto \Sigma^{-V}$ and natural isomorphisms $\Sigma^V\circ \Sigma^{-V}\cong \id\cong \Sigma^{-V}\circ \Sigma^V$ extends to a map of groupoids from the path groupoid of the $K$-theory space of $X$ to the groupoid $\Aut(\SH(X))$ of autoequivalences of $\SH(X)$ and natural isomorphisms of such. 

For $f:Y\to X$ a smooth morphism, there are canonical isomorphisms
\[
f_!\cong f_\#\circ \Sigma^{-T_f},\quad f^!\cong \Sigma^{T_f}\circ f^*
\]
where $T_f\to Y$ is the relative tangent bundle, that is $T_f=\Spec_{\sO_Y} \Sym^*_{\sO_Y}\Omega_{Y/X}$, the vector bundle over $Y$ whose sheaf of sections is the $\sO_Y$-dual of the locally free sheaf of relative K\"ahler differentials $\Omega_{Y/X}$. 

Let $p_X:X\to S$ be a smooth $S$-scheme and let $V\to X$ be a rank $r$-vector bundle on $X$. Then $p_{X\#}(\Sigma^V 1_X)$ is represented by the suspension spectrum $\Sigma^\infty_{\P^1}\Th(V)$, where $\Th(V)$ is the Thom space $\P(V\oplus \sO)/\P(V)$. Taking $S=\Spec k$, the Thom isomorphisms in $\MGL$-theory translate into a canonical isomorphism $\MGL^{a,b}(p_{X\#}(\Sigma^V 1_X))=\MGL^{a,b}(\Th(V))\cong \MGL^{a-2r, b-r}(X)$.  Replacing $\Sigma^V$  with $\Sigma^{-V}$ gives rise to a canonical isomorphism $\MGL^{a,b}(p_{X\#}(\Sigma^{-V} 1_X))\cong \MGL^{a+2r, b+r}(X)$. In particular, we have the  element $[-V]_{\MGL}\in \MGL^{-2r,-r}(p_{X\#}(\Sigma^{-V} 1_X))$ corresponding to the unit $1_X^{\MGL}:=p_X^*(1^{\MGL})\in \MGL^{0,0}(X)$. 

For $p_X:X\to S$, $p_Y:Y\to S$ and  $f:X\to Y$ a projective morphism in $\Sm_S$,  
we have the natural transformation $P_f:p_{Y!}\to p_{X!}\circ f^*$ defined as the composition
\[
p_{Y!}\xrightarrow{u_f} p_{Y!}\circ f_*\circ f^*\xrightarrow{\eta_f^{-1}}p_{Y!}\circ f_!\circ f^*\cong
p_{X!}\circ f^*.
\]
Here $u_f$ is the map induced by the unit $\id\to f_*\circ f^*$ of the adjunction. Applying $P_f(1_Y)$ gives the map $f^*:p_{Y!}(1_Y)\to p_{X!}(1_X)$  in $\SH(S)$. For $\sE\in \SH(S)$, $f^*$ induces the map
\[
f_*:\sE^{a,b}(p_{X!}(1_X))\to \sE^{a,b}(p_{Y!}(1_Y))
\]
by $f_*(\alpha):=\alpha\circ f^*$.

\begin{definition} Let $k$ be a field and let $X\in \Sm_k$ be a smooth projective $k$-scheme of pure dimension $d$ over $k$ with structure morphism $p_X:X\to \Spec k$. Let $T_{X/k}\to X$ be the tangent bundle of $X$ over $\Spec k$.  Define the cobordism class $[X]_{\MGL}$ by
\[
[X]_{\MGL}:=p_{X*}([-T_{X/k}]_{\MGL})\in \MGL^{-2d, -d}(k)=\MGL_{2d,d}(k).
\]
noting that $p_{k!}(1_k)=1_k$ and $p_{X!}(1_X)=p_{X\#}(\Sigma^{-T_{X/k}}1_X)$.
\end{definition}

We wish to lift $[X]_{\MGL}$ to a class in $\MSL_{2d, d}(k)$, assuming that $X$ is a Calabi-Yau variety. More precisely,  given an isomorphism $\phi_X:\det T_{X/k}\xrightarrow{\sim} \sO_X$, we will define a class $[X;\phi_X]_{\MSL}\in \MSL^{-2d, -d}(k)$ mapping to $[X]_{\MGL}$ via the canonical map $ \MSL^{-2d, -d}(k)\to  \MGL^{-2d, -d}(k)$. For this, we recall from \cite[\S 5]{PW} that $\MSL$  is an {\em $\SL$-oriented} motivic commutative ring spectrum, that is, given a rank $r$-vector bundle $V\to X$ on some $X\in \Sm_k$ and an isomorphism $\phi:\det V\to \sO_X$,  there is a  Thom class $\theta_{V, \phi}\in \MSL^{2r, r}(\Th(V))$ such that cup product with $\theta_{V, \phi}$ defines an isomorphism $\vartheta_{V,\phi}:\MSL^{a,b}(X)\to \MSL^{a+2r, b+r}(\Th(V))$. Thus, just as for $\MGL$, we have the class $[-V,\phi]_{\MSL}\in \MSL^{-2r, -r}(p_{X\#}(\Sigma^{-V} 1_X))$ corresponding to the unit $1_X^{\MSL}\in \MSL^{0,0}(X)$. As the canonical map $\alpha:\MSL\to \MGL$ sends $1_X^{\MSL}$ to $1_X^{\MGL}$ and maps $\theta_{V, \phi}\in \MSL^{2r, r}(\Th(V))$ to $\theta_V\in  \MGL^{2r, r}(\Th(V))$, we have $\alpha_*([-V,\phi]_{\MSL})=[-V]_{\MGL}$.

\begin{definition}  Let $p_X:X\to \Spec k$ smooth and projective of pure dimension $d$ over $k$, and suppose we have an isomorphism $\phi_X:\det T_{X/k}\xrightarrow{\sim} \sO_X$. Define $[X, \phi]_{\MSL}$ by 
\[
[X, \phi]_{\MSL}:=p_{X*}([-T_{X/k}, \phi_X])\in \MSL^{-2d,-d}(k)=\MSL_{2d,d}(k).
\]
\end{definition}
As $\alpha_*([-T_{X/k},\phi_X]_{\MSL})=[-T_{X/k}]_{\MGL}$, we have $\alpha_*([X, \phi]_{\MSL})=[X]_{\MGL}$, giving us a (non-unique!) lifting of $[X]_{\MGL}$ to a class in $\MSL^{-2d,-d}(k)$.

We recall the description of the cobordism class $[X]_{\MGL}$ in terms of the Chern numbers of $X$. As detailed in \S\ref{sec:prelim}, we have the graded polynomial rings $\Z[c_1, c_2,\ldots]$, $\Z[b_1, b_2,\ldots]$ with $\deg c_i=i$, $\deg b_i=-i$ and perfect pairings on homogeneous summands $\<-,-\>:\Z[c_1, c_2,\ldots]_n\times \Z[b_1, b_2,\ldots]_{-n}\to \Z$ by making the Conner-Floyd Chern class $c_I$ dual to the monomial $b^I$. Here $I=(i_1, i_2, \ldots, i_r)$ is an index with $|I|:=\sum_{j=1}^r j\cdot i_j=n$.

 We have as well the identification of $\Z[b_1, b_2,\ldots]_{-n}$ with $H_{2n, n}(\MGL)=\Hom_{\SH(k)}(\Sigma^{2n,n}\mS_k, H\Z\wedge \MGL)$ (Theorem~\ref{thm:MGLMSLCohHom}). The unit $H\Z$ induces the map  $\MGL=\mS_k\wedge \MGL\to H\Z\wedge \MGL$  which in turn induces the motivic Hurewicz map $h_{\MGL}:\MGL_{2n, n}(k) \to H_{2n, n}(\MGL)$. 
 
For a vector bundle $V\to X$, we have the Conner-Floyd Chern polynomial $c_{\bullet, t}(V):=1+\sum_Ic_I(V)t^I\in H^{2*,*}(X)[[t_1, t_2, \ldots]]$. As $c_{\bullet, t}(-)$ is multiplicative in exact sequences, $V\mapsto c_{\bullet, t}(V)$ extends to a group homomorphism $c_{\bullet, t}(-):K_0(X)\to (1+(t_1,\ldots, t_n)H^{*,*}(X)[[t_1, t_2, \ldots]])^\times$, in particular, we have classes $c_I(-T_{X/k})\in H^{2|I|,|I|}(X)$. 

\begin{prop}\label{prop:Hurw} 1. After inverting the exponential characteristic of $k$, the Hurewicz map $h_{\MGL}:\MGL_{2*, *}(k) \to H_{2*, *}(\MGL)=\Z[b_1, b_2,\ldots]$ is injective. \\
2. For  $X$ smooth and projective of dimension $d$ over $k$, 
\[
h_{\MGL}([X]_{\MGL})=\sum_{I,\ |I|=d}\deg_k(c_I(-T_{X/k}))\cdot b^I
\]
\end{prop}

\begin{proof} (1) Let $\lambda_b(t)\in \Z[b_1, b_2,\ldots][[t]]$ be the power series $\lambda_b(t)=t+\sum_{i\ge 1}t^{i+1}b_i$, with functional inverse $\lambda_b^{-1}(t)\in  \Z[b_1, b_2,\ldots][[t]]$, i.e., $\lambda_b(\lambda_b^{-1}(t))=t=\lambda_b^{-1}(\lambda_b(t))$. Let $F_b(u,v):=\lambda_b(\lambda_b^{-1}(u)+\lambda_b^{-1}(v))\in \Z[b_1, b_2,\ldots][[u,v]]$. Then $F_b(u,v)$ is a formal group law over $\Z[b_1, b_2,\ldots]$, hence there is a unique ring homomorphism $\log:\Laz\to \Z[b_1, b_2,\ldots]$ sending the universal formal group law $F_{univ}(u,v)$ to $F_b(u,v)$. In fact $\log$ is injective (see for example \cite[Part II, Theorem 7.8]{Adams74}).

We have the universal first Chern classes of $O(1)\to \P^\infty$, $c_1^{\MGL}\in \MGL^{2,1}(\P^\infty)$ and $c_1^{H\Z}\in H\Z^{2,1}(\P^\infty)$. Via the respective unit maps $\MGL\to H\Z\wedge \MGL$, $H\Z\to H\Z\wedge \MGL$, we consider $c_1^{\MGL}$ and $c_1^{H\Z}$ both as elements of $(H\Z\wedge \MGL)^{2,1}(\P^\infty)$. The proof of \cite[Part II, Corollary 6.6]{Adams74}, modified in the evident way, shows that  
\[
c_1^{\MGL}=c_1^{H\Z}\cdot(1+ \sum_{i\ge1}(c_1^{H\Z})^ib_i)\in (H\Z\wedge \MGL)^{2,1}(\P^\infty)=
H^{**}(k)[b_1, b_2,\ldots][[c_1^{H\Z}]]^{2,1}.
\]
Noting that $c_1^{H\Z}(L\otimes M)=c_1^{H\Z}(L)+c_1^{H\Z}(M)$, this shows that, applying the Hurewicz map $h_{\MGL}$ to the coefficients of the formal group law $F_{\MGL}(u,v)\in \MGL^{2*,*}(k)[[u,v]]$, we have
\[
h_{\MGL}(F_{\MGL})(u,v)=F_b(u,v)\in H\Z^{2*,*}(\MGL)[[u,v]]=\Z[b_1, b_2,\ldots ][[u,v].
\]
Recalling the   ring homomorphism $\phi_{\MGL}:\Laz\to \MGL^{-2*,-*}(k)$ classifying $F_{\MGL}$, this gives the commutative diagram
\[
\xymatrix{
\Laz\ar[r]^-{\phi_{\MGL}}\ar[d]_{\log}&\MGL_{2*,*}(k)\ar[d]^{h_{\MGL}}\\
\Z[b_1, b_2,\ldots]\ar@{=}[r]&H_{2*,*}(\MGL)
}
\]
Since $\log$ is injective and $\phi_{\MGL}$ is an isomorphism after inverting the exponential characteristic of $k$ (Theorem~\ref{thm:HMH}), we see that $h_{\MGL}$ is injective after  inverting the exponential characteristic.

For (2), since $p_X:X\to \Spec k$ is smooth and projective over $k$, the object $p_{X!}(1_X)$ of $\SH(k)$ is canonically isomorphic to the dual of the suspension spectrum $\Sigma^\infty_{\P^1}X_+$, and the map $P_{p_X}:\mS_k=\id_{\Spec !}(1_{\Spec k})\to p_{X!}(1_X)$ is the dual of $\Sigma^\infty_{\P^1}(p_X):\Sigma^\infty_{\P^1}X_+\to \mS_k$ (see \cite[Theorem 5.22, Theorem 6.9]{Hoy6Op}).  Taking a Jouanolou cover $q:\tilde{X}\to X$, we can find a vector bundle $\nu_X\to \tilde{X}$ and an isomorphism $q^*T_{X/k}\oplus \nu_X\cong \sO_{\tilde{X}}^N$ for suitable $N$. This gives an isomorphism  $\Sigma^{-T_{X/k}}\cong \Sigma_{\P^1}^{-N}\circ \Sigma^{\nu_X}$ of autoequivalences of $\SH(k)$, and thus an isomorphism $p_{X!}(1_X)\cong \Sigma^{-N}_{\P^1}\Sigma^\infty_{\P^1}\Th(\nu_X)$. 

Letting $r=N-d=\text{rank}(\nu_X)$, we have the map $f_\nu:\tilde{X}\to \BGL_r$ classifying $\nu_X$ and inducing the map on homology 
\[
f_{\nu*}:H_{2d, d}(X)=H_{2d, d}(\tilde{X})\to H_{2d, d}(\BGL_r)\cong 
H_{2(d+r), d+r}(\MGL_r)\to  H_{2d, d}(\MGL)
\]
and a pull-back map on cohomology, $f_\nu^*:H^{2d, d}(\MGL)\to H^{2d, d}(X)$. Similarly, the map $P_{p_X}:\mS_k\to \Sigma^{-N}_{\P^1}\Sigma^\infty_{\P^1}\Th(\nu_X)$ sends $1\in H_{0,0}(\Spec k)$ to $p_X^*(1)\in H_{0,0}(\Sigma^{-N}_{\P^1}\Sigma^\infty_{\P^1}\Th(\nu_X))=H_{2d, d}(X)$. The map $p_X^*$ defined here via duality in $\SH(k)$ agrees with the map $p_X^*$ defined via duality in $\DM(k)$, and one sees thereby that 
$p_X^*(1)\in H_{2d, d}(X)=\CH_{d}(X)$ is just the fundamental class $[X]_{H\Z}$, that is, the cycle $1\cdot X$. 

As $f_\nu$ classifies $\nu_X$, we have the commutative diagram
\[
\xymatrix{
\nu_X\ar[r]^{\tilde{f}_\nu}\ar[d]&\sV_r\ar[d]\\
\tilde{X}\ar[r]_-{f_\nu}&\BGL_r
}
\]
and the map $\tilde{f}_\nu$ induces $\tho(\tilde{f}_\nu):\Th(\nu_X)\to \Th(\sV_r)=\MGL_r$. The map  $[-T_{X/k}]_{\MGL}:p_{X!}(1_X)\to \Sigma^{2d, d}\MGL$ is by definition the map induced by $\Sigma^{-N}_{\P^1}\Sigma^\infty_{\P^1}(\tho(\tilde{f}_\nu))$ followed by $\iota_r:\Sigma^\infty_{\P^1}\MGL_r\to \Sigma^r\MGL$ and thus $[X]_{\MGL}$ is induced by $\iota_r\circ 
\Sigma^{-N}_{\P^1}\Sigma^\infty_{\P^1}(\th(\tilde{f}_\nu))\circ p_X^*$. 

Tracing through the various Thom isomorphisms,  it follows that $h_{\MGL}([X]_{\MGL})\in H_{2d, d}(\MGL)= H_{2d, d}(\BGL)$ is given by
\[
h_{\MGL}([X]_{\MGL})=f_{\nu*}([X]_{H\Z})
\]
and thus for a given index $I$ with $|I|=d_X$, we have
\[
\<c_I, h_{\MGL}([X]_{\MGL})\>=\<c_I, f_{\nu*}([X]_{H\Z})\>=\<f_\nu^*(c_I), p_X^*(1)\>=
\<p_{X*}(f_\nu^*(c_I)),1\>.
\]
Here we consider $f_\nu(c_I)$ as an element of $H^{2d_X, d_X}(X)$ via the isomorphism
$q^*: H^{2d, d}(X)\to H^{2d, d}(\tilde{X})$ and similarly consider  $p_X^*(1)\in H_{2d, d}(X)$. The map $f_\nu$ classifies the bundle $\nu_X$ on $\tilde{X}$, and as $[\nu_X]+[q^*T_{X/k}]=[\sO_{\tilde{X}}^N]$ in $K_0(\tilde{X})\cong K_0(X)$, it follows that $f_\nu^*(c_I)=c_I(-[T_{X/k}])$ in $H_{0,0}(X)=\CH^{d}(X)$. Thus 
\[
\<c_I, h_{\MGL}([X]_{\MGL})\>=\<p_{X*}(f_\nu^*(c_I)),1\>=\deg_k(c_I(-[T_{X/k}]))
\]
and so $h_{\MGL}([X]_{\MGL})=\sum_{I, |I|=d_X}c_I(-T_{X/k})b^I$, as claimed.
\end{proof}

We collect the main results of this section in the following theorem.

\begin{theorem}\label{thm:MSLLifitng} 1. After inverting the exponential characteristic of $k$, the Hurewicz map $h_{\MGL}:\MGL_{2*,*}(k)\to H\Z_{2*,*}(\MGL)=\Z[b_1, b_2,\ldots]$ is injective.\\
2. For $X$ a smooth projective $k$-scheme of dimension $d$, the class $[X]_{\MGL}\in \MGL_{2d, d}(k)$ satisfies $h_{\MGL}([X]_{\MGL})=\sum_{I,\ |I|=d}c_I(-T_{X/k})\cdot b^I$.\\
3.  Let $X$ be a smooth projective $k$-scheme of dimension $d$ with a trivialization $\theta_X:\det T_{X/k}\xrightarrow{\sim} \sO_X$  of the determinant bundle $\det T_{X/k}$. Then there is a class $[X,\theta_X]_{\MSL}\in \MSL_{2d, d}(k)$ mapping to $[X]_{\MGL}\in \MGL_{2d, d}(k)$ under the projection $\MSL\to \MGL$. \\
4. After inverting the exponential characteristic of $k$, ${\MGL}_{2d,d}(k)$ is generated by the classes $[X]_{\MGL}$, $X$ a smooth projective $k$-scheme of dimension $d$.
\end{theorem}

\begin{proof} We have proven everything except for (4), which follows from Theorem~\ref{thm:HMH} and  \cite[Corollary10.8]{Adams74}.
\end{proof}

\section{ The mod-$l$ motivic Steenrod algebra}\label{sec:Steenrod}
In this section, we recall some basic facts of the mod-$l$ motivic Steenrod algebra  introduced by Voevodsky in \cite{Voev03}.
For the rest of the paper (with the exception of Appendix A), $l$ will be an odd prime different from $\Char(k)$. 
For such $l$,  the motivic Steenrod algebra behaves quite  similarly   to its topological counterpart, as discussed in \cite{Adams}, \cite{Mil} and \cite{Nov}. Nevertheless, for the convenience of the readers, we collect  the properties relevant to us.

Let $A\bideg:=A\bideg(k, \Z/l)$ be the mod-$l$ \textit{motivic Steenrod algebra}. By definition \cite[\S11]{Voev03}, $A\bideg$ is the subalgebra of  {the algebra of bi-stable operations on mod $l$ motivic cohomology of smooth $k$-schemes generated by the motivic reduced Steenrod power operators $P^i, i\geq 0$,  the  Bockstein homomorphism $\beta$, and operators of the form $u\mapsto au$, where $a\in H\bideg$. We have  $P^0=1$, 
\[
\deg(P^i)=(2i(l-1), i(l-1)), \,\
\deg(\beta)=(1, 0). 
\]
As mod $l$ motivic cohomology is represented by the spectrum $H\Z/l\in \SH(k)$,  any endomorphism  of $H\Z/l$ represents a bi-stable operation; in fact (\cite[Theorem 3.49]{Voev}, \cite[Theorem 1.1]{HSO}) this map determines an isomorphism $\End_{\SH(k)}^{**}(H\Z/l)\cong A\bideg$, and thus $A\bideg$ acts on $H\bideg(\sF, \Z/l):=\Hom_{\SH(k)}(\sF, \Sigma^{**}H\Z/l)$ for any $\sF\in \SH(k)$. }

{The product on motivic cohomology gives rise to a bi-graded coproduct $\Delta:A^{**}\to A^{**}\otimes_{H^{**}}A^{**}$  via the identity for $\theta\in A^{**}$
\[
\Delta(\theta)=\sum_i \alpha_i\otimes \beta_i\Leftrightarrow
\theta(xy)=\sum_i(-1)^{ab_i}\alpha_i(x)\beta_i(y)
\]
if $x$ has bi-degree $(a, a')$ and $\beta_i$ has bi-degree $(b_i, b_i')$. Here we follow the convention in \cite{Voev03}, that all tensors over $H\bideg$ are tensors as left $H\bideg$-modules. }

{
 Following \cite[\S11]{Voev03}, there is an action of $A\bideg\otimes_{H\bideg}A\bideg$ on $A\bideg\otimes_{\Z/l}A\bideg$ with values in $A\bideg\otimes_{H\bideg}A\bideg$ given by
\[
(u\otimes v)\cdot (u'\otimes v')=uu'\otimes vv'.
\]
Let $(A\bideg\otimes_{H\bideg}A\bideg)_r \subset A\bideg\otimes_{H\bideg}A\bideg$ denote the subset of elements $f$ such that, if $x, y$ are in $A\bideg\otimes_{\Z/l}A\bideg$ with $x=y$ in $A\bideg\otimes_{H\bideg}A\bideg$, then $f\cdot x=f\cdot y$ in $A\bideg\otimes_{H\bideg}A\bideg$. The above product gives rise to a well-defined ring structure on 
$(A\bideg\otimes_{H\bideg}A\bideg)_r$ and gives  $A\bideg\otimes_{H\bideg}A\bideg$ an $(A\bideg\otimes_{H\bideg}A\bideg)_r$-module structure. }

{
The  image of $\Delta$ is contained in $(A\bideg\otimes_{H\bideg}A\bideg)_r$ and defines a ring homomorphism 
\[
\Psi^*: A\bideg\to (A\bideg\otimes_{H\bideg}A\bideg)_r,
\]
so in this  modified sense, we may refer to the ring $A^{**}$ with coproduct $\Psi^*$ as a bi-algebra. We refer the reader to \cite[Lemmas 11.6-.9]{Voev03} for details.} 

{
 We have the motivic Milnor basis 
\cite[Section 13]{Voev03} \[\{\rho(E, R)=Q(E) P^R\in A\bideg\mid R=(r_1, r_2, \ldots), E=(\epsilon_0, \epsilon_1, \ldots )\} ,\]  
where $R$ and $E$  are two sequences of integers,  $r_i \geq 0$ and $\epsilon_i\in \{0, 1\}$, which are non-zero for only finitely many $i$.  
For any $r\in\bbN$,  let $e_r=(0, \ldots, 0, 1, 0, \ldots )$, where the $1$ is on the $r$-th place, and set  $Q_r := Q(e_r)$.  We have $P^i=P^{i\cdot e_1} $ and $Q_{0}=\beta$. }

{
The topological mod $l$ Steenrod algebra $A^{\topo}$ has $\Z/l$-algebra generators $P^i_{\topo}$, $Q_i^{\topo}$, $i=0, 1, \ldots$, with $P^0_{\topo}=1$ and $Q_0^{\topo}=\beta_{\topo}$, where $\beta_{\topo}$ is the Bockstein operator \cite[Chap. IV, \S1, 2]{SteenrodEpstein}. There is also the classical Milnor basis \cite{Mil}, $\{\rho_{\topo}(E, R)=Q^{\topo}(E) P_{\topo}^R\}$, giving us the particular elements $Q_r^{\topo}:= Q^{\topo}(e_r)$, with  $P^i_{\topo}=P^{i\cdot e_1}_{\topo}$; $A^{\topo}$ has a coproduct $\Delta_{\topo}:A^{\topo}\to A^{\topo}\otimes_{\Z/l}A^{\topo}$, characterized via the products in $H^*(-,\Z/l)$ as for $A^{**}$, and making $A^{\topo}$ into a Hopf algebra over $\Z/l$.}

{
\begin{lemma}\label{lem:steen_generators} Let $l$ be an odd prime and let $k$ be a field of characteristic 0 or of characteristic $p>0$ with $p$ prime to $l$. The following properties hold.
\begin{enumerate}
\item  There is a unique ring homomorphism $\Xi:A^{\topo}\to A^{**}$ with $\Xi(Q^{\topo}(E) P_{\topo}^R)=Q(E) P^R$ for all $(E,R)$.   The $H^{**}$-linear extension of $\Xi$,    $\id\otimes\Xi:H^{**}\otimes_{\Z/l}A^{\topo}\to A^{**}$, is an isomorphism of left $H^{**}$-modules and  $\Xi$ is a map of bi-algebras. 
\item $Q_r$ has bi-degree $(2l^r-1, l^r-1)$ and for each $k\ge1$,  $P^{k\cdot e_r}$ has bi-degree $(2k(l^r-1), k(l^r-1))$. 
\item Define a partial order on $\{P^{k\cdot e_r}\mid k, r\ge1\}\cup \{Q_r\mid r\ge0\}$ by $ P^{k\cdot e_r}<P^{k'\cdot e_{r+1}}<Q_{r'}<Q_{r'+1}$ for all $k, k' r, r'$, and with $P^{k\cdot e_r}, P^{k'\cdot e_r}$ incomparable for $k\neq k'$. Then the set of finite monomials in the $P^{k\cdot e_r}, Q_{r'}$ which are strictly increasing when read from left to right, together with $1=P^0$,  forms a basis for $A^{**}$ as a left $H^{**}$-module. 
\item The $Q_i$ and $Q_j$ anti-commute and $Q_i^2=0$.
\item (The Cartan formulas).  
\begin{align*}
&\Psi^*(P^R)=\sum_{R_1+R_2=R}  P^{R_1}\otimes P^{R_2}, \\
&\Psi^*(Q_i)=Q_i\otimes 1+1\otimes Q_i.
\end{align*}
\item For $X\in \Sm_k$, let $L\to X$ be a line bundle, $c_1(L)\in H^{2,1}(X,\Z/l)$ the mod $l$ first Chern class. Then $P^0(c_1(L))=c_1(L)$, $P^1(c_1(L))=c_1(L)^l$ and $P^i(c_1(L))=0$ for $i\ge2$. Moreover, $(\beta\circ P^i)(c_1(L))=0$ for $i\ge0$. For $M$ a complex manifold and $L\to M$ a $\CC$-line bundle, the analogous formulas hold for $P^i_{\topo}(c_1^{\topo}(L))$ and $(\beta_{\topo}\circ P^i_{\topo})(c_1^{\topo}(L))$ in $H^*(M, \Z/l)$.
\end{enumerate}
\end{lemma}}

\begin{proof}  For (1),   the $P^i_{\topo}, \beta^{\topo}$ satisfy the Adem relations \cite[Chap. VI, \S1(6)]{SteenrodEpstein}, $(\beta^{\topo})^2=0$ and  $A^{\topo}$ is defined as the  $\Z/l$-algebra generated by the $P^i_{\topo}, \beta^{\topo}$ and satisfying the Adem relations. As the $P^i$ and $\beta$ also satisfy the Adem relations  \cite[Theorem 5.1]{HSO} and $\beta^2=0$, there is a unique ring homomorphism $\Xi:A^{\topo}\to A^{**}$ sending $P^i_{\topo}$ to $P^i$ and $ \beta_{\topo}$ to $\beta$. 

The so-called admissible monomials in the $P^i_{\topo}$ and  $\beta_{\topo}$  form a $\Z/l$-basis of $A^{\topo}$ \cite[Chap. VI, \S 2, Theorem 2.5]{SteenrodEpstein} and by  \cite[Lemma 11.1, Corollary 11.5]{Voev}, the admissible monomials in the $P^i$ and $\beta$ form an $H^{**}$-basis of $A^{**}$ (as left $H^{**}$-module).  Thus $\id\otimes\Xi:H^{**}\otimes_{\Z/l}A^{\topo}\to A^{**}$ is an isomorphism of left $H^{**}$-modules. 

For the Cartan formulas for $\Psi^*(P^n)$ and $\Psi^*(\beta)$,  see \cite[Proposition 9.7]{Voev03}. 
The coproduct on $A^{**}$ is defined via the dual of the Cartan formula, and similarly for the coproduct on $A^{\topo}$; as the analog of the Cartan formula holds in $A^{\topo}$ \cite[Chap. VI, \S2]{SteenrodEpstein},  $\Xi$  thus defines a map of bi-algebras.

There are  elements $\tau_i, \xi_i$ in the $H^{**}$-dual $A_{**}$ of $A^{**}$, which by  \cite[Lemma 12.3]{Voev03} are  the respective duals of $P^{e_i}\cdot \beta$ and $P^{e_i}$ with respect to the $H^{**}$-basis of  $A^{**}$ given by  admissible monomials in $\beta$ and the $P^i$. Similarly (\cite[Chap VI, \S 3]{SteenrodEpstein}), there are elements $\tau_i^{\topo}, \xi_i^{\topo}$ in the $\Z/l$-dual $A_{\topo}$ of $A^{\topo}$ which are  the respective duals to $P^{e_i}_{\topo}\cdot \beta_{\topo}$ and  $P^{e_i}_{\topo}$, with respect to the $\Z/l$ basis of $A^{\topo}$ given by the admissible monomials in $\beta_{\topo}$ and the $P^i_{\topo}$.    The elements $\{Q(E) P^R\}$ are defined as the dual basis to the basis of $A_{**}$ consisting  of  monomials $\{\tau^E\xi^R\}$ in the $\tau_i, \xi_i$ and the elements $\{Q^{\topo}(E) P_{\topo}^R\}$ are similarly defined using the monomials $\{\tau^E_{\topo}\xi^R_{\topo}\}$  in $\tau^{\topo}_i, \xi_i^{\topo}$.  Since $\Xi$ is a map of bi-algebras,  $\Xi(Q^{\topo}(E) P_{\topo}^R)=Q(E) P^R$; the uniqueness of such a $\Xi$ follows from the identities $P^i=P^{i\cdot e_1}$, $\beta=Q_0$, $P^i_{\topo}=P^{i\cdot e_1}_{\topo}$, $\beta_{\topo}=Q^{\topo}_0$, and the uniqueness for $\Xi$ that we have already established. 

The formulas for the bi-degrees for $P^{k\cdot e_r}$ and $Q_r$  follow directly from their definition in \cite[\S 13]{Voev03}. The assertions (3) and (4) follow from  (1) and  the corresponding assertions in $A^{\topo}$ \cite[\S 2.2]{Nov} or \cite[pg. 200]{Adams}. Here we note that in the notation of Novikov and Adams, we have $e_{k,r}=P_{\topo}^{k\cdot e_r}$ and $e'_r=Q_r^{\topo}$, and that we have stated a less general assertion than given in {\it loc. cit.}.

The Cartan formulas (5) for $\Psi^*(P^R)$ and $\Psi^*(Q_i)$ are the duals of the multiplication of the monomials $\tau^E\xi^R$.

For (6), the formulas $P^0(c_1(L))=c_1(L)$, $P^1(c_1(L))=c_1(L)^l$ and $P^i(c_1(L))=0$ for $i\ge2$ are found in  \cite[Lemma 6.7]{Voev03}. Since $c_1(L)$ is the reduction mod $l$ of the integral class $c_1(L)_\Z\in H^{2, 1}(X,\Z)$, $\beta(c_1(L))=0$ and $\beta(c_1(L)^l)=0$. For the topological case, the formulas for $P^i_{\topo}(c_1^{\topo}(L))$ follow from the axioms (2), (3), (4) of  \cite[Chap. VI, \S 1]{SteenrodEpstein} and the vanishing of the $\beta_{\topo}(c_1^{\topo}(L))$ and $\beta(c_1^{\topo}(L)^l)$ follows exactly as in the motivic case.
\end{proof}

\begin{remark} The essential difference between the topological and motivic Steenrod algebras for an odd prime $l$ is that the motivic Steenrod algebra acts in general non-trivially on $H^{**}$, so one must carefully distinguish between the left and right $H^{**}$-module structures. In particular, even though the isomorphism $\id\otimes\Xi:H^{**}\otimes_{\Z/l}A^{\topo}\to A^{**}$ of left $H^{**}$-modules is multiplicative on $A^{\topo}$, it  is {\em not} in general an isomorphism of rings.
\end{remark}

\section{Modules over the motivic Steenrod algebra}
\label{sec:SteenMod}

\subsection{A quotient of the Steenrod algebra}

Let $B \subset A\bideg$ be the $\Z/l$-subalgebra generated by $\{Q_i\}_{i\geq 0}$.   Let 
$M_B:=A\bideg/ A\bideg(Q_0, Q_1, \ldots )$ be the quotient of $A\bideg$ by the left ideal generated by $(Q_0, Q_1, \ldots )$. We similarly define $B_{\topo}\subset A_{\topo}$ and  $M_B^{\topo}:=A^{\topo}/A^{\topo}(Q_0^{\topo}, Q_1^{\topo},\ldots)$.

By essentially the same argument as in \cite[Lemma~8, 10]{Nov}, one has the following.
\begin{lemma}\label{lem:ext_iso}
There is an isomorphism of tri-graded abelian groups
\[\bigoplus_{s, t, u}\Ext^{s,(t-s,u)}_{A\bideg}(M_B,H\bideg)\cong 
\bigoplus_{s, t, u}\Ext^{s,(t-s,u)}_B(\Z/l, H\bideg).\]
\end{lemma}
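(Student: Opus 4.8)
The plan is to recognize the claimed isomorphism as an instance of the extension--restriction (change-of-rings) adjunction along the inclusion $B\hookrightarrow A\bideg$, following Novikov's argument \cite[Lemma~8]{Nov}.

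First, regard $\Z/l[\rho]$ as the trivial left $B$-module via the augmentation $B\twoheadrightarrow B/B^{+}=\Z/l[\rho]$, where $B^{+}=\sum_{i\ge 0}BQ_i$; this is legitimate because the $Q_i$ anticommute and square to zero (Lemma~\ref{lem:steen_generators}(3), the latter since $l$ is odd and each $Q_i$ has odd first degree), so $B$ is the exterior $\Z/l[\rho]$-algebra on $Q_0,Q_1,\dots$. Extension of scalars along $B\hookrightarrow A\bideg$ then gives a canonical isomorphism of left $A\bideg$-modules
\[
A\bideg\otimes_{B}\Z/l[\rho]\;\cong\;A\bideg/A\bideg B^{+}\;=\;A\bideg/A\bideg(Q_0,Q_1,\dots)\;=\;M_B,
\]
and it is compatible with the internal bidegrees. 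Thus it suffices to construct an isomorphism $\Ext^{s,(s+t,u)}_{A\bideg}(A\bideg\otimes_{B}\Z/l[\rho],H\bideg)\cong\Ext^{s,(s+t,u)}_{B}(\Z/l[\rho],H\bideg)$.

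The only nonformal ingredient is that $A\bideg$ is free --- hence flat --- as a \emph{right} $B$-module, with the monomials in $\{P^{e_r}\}_{r\ge 1}$ as a basis. This is the motivic counterpart of Milnor's description of the Steenrod algebra as a free module over its exterior subalgebra of primitives, and it is extracted from the Milnor basis of Lemma~\ref{lem:steen_generators}(2): filtering $A\bideg$ by the $P$-length $|R|=\sum_i r_i$ of a basis element $Q(E)P^{R}$, the commutation formulas dual to Lemma~\ref{lem:steen_generators}(4) show that right multiplication by a $Q_i$ preserves this filtration and, on the associated graded, only permutes the $P$-monomials up to sign; hence $\gr A\bideg$ is free over $\gr B$ on the $P$-monomials, and a routine successive-approximation argument lifts this to freeness of $A\bideg$ over $B$ with the same generators.

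Granting this, $A\bideg\otimes_{B}(-)$ is exact, so it sends a free tri-graded resolution $P_\bullet\to\Z/l[\rho]$ of $\Z/l[\rho]$ over $B$ to a free resolution $A\bideg\otimes_{B}P_\bullet\to M_B$ over $A\bideg$, and the extension--restriction adjunction gives a natural isomorphism of cochain complexes
\[
\Hom_{A\bideg}\!\bigl(A\bideg\otimes_{B}P_\bullet,\,H\bideg\bigr)\;\cong\;\Hom_{B}\!\bigl(P_\bullet,\,H\bideg\bigr)
\]
compatible with the homological degree $s$ and both internal gradings; taking cohomology yields the asserted isomorphism. The step demanding the most care is the right $B$-module structure of $A\bideg$: the motivic Steenrod algebra is noncommutative over $H\bideg$ and the (co)product formulas carry $\tau$- and $\rho$-corrections, so one must check that the ``suitably defined order'' on the Milnor basis (Lemma~\ref{lem:steen_generators}(2)) can be chosen to make the filtration degenerate as required. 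Once this structural fact is in place, the remainder is formal homological algebra, exactly as in \cite{Nov}.
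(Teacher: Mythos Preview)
Your proposal is correct and follows exactly the approach the paper intends: the paper's entire proof is the sentence ``By the same argument as in \cite[Lemma~8]{Nov}'', and what you have written is precisely a spelling-out of Novikov's change-of-rings argument, with appropriate attention to the motivic coefficients. Your identification $M_B\cong A\bideg\otimes_B\Z/l[\rho]$, the reduction to right $B$-freeness of $A\bideg$, and the formal Shapiro-type conclusion are all the standard steps; the only remark worth adding is that for odd $l$ the relations $\tau_i^2=0$ in the dual motivic Steenrod algebra make the Milnor basis behave exactly as in topology, so one can also use the $P^RQ(E)$-ordered basis (or the antipode) to see right $B$-freeness directly, which slightly shortcuts your filtration argument.
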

 
\begin{proof}We rephrase the argument of \cite[\hbox{\it loc. cit.}]{Nov}.   By Lemma~\ref{lem:steen_generators}   $M_B$ is isomorphic as a left $H^{*, *}$-module to the free 
left $H^{*, *}$-module on the monomials $P^{k_1\cdot e_{i_1}}\cdot \ldots\cdot P^{k_m\cdot e_{i_m}}$ with $1\le i_1<\ldots<i_m$, $m=0, 1, \ldots$ and $k_j\ge1$ for all $j$, and thus 
$A\bideg$ is a free bi-graded right $B$-module with basis $\{u_\alpha\cdot P^{k_1\cdot e_{i_1}}\cdot \ldots\cdot P^{k_m\cdot e_{i_m}}\}$, where $(u_\alpha)_\alpha$ is a bi-graded $\Z/l$-basis of $H^{**}$ and the monomials $P^{k_1\cdot e_{i_1}}\cdot \ldots\cdot P^{k_m\cdot e_{i_m}}$ are the basis for $M_B$ over $H^{**}$ described above. In particular, $A\bideg$ is flat over $B$. Moreover, we have the isomorphism of bi-graded left $A\bideg$-modules
\[
M_B\cong A\bideg\otimes_B\Z/l.
\]
Thus if $C^*_B\to \Z/l$ is a resolution of the $B$-module $\Z/l$ by free bi-graded $B$-modules, then $A\bideg \otimes_BC^*_B\to M_B$ is a resolution of $M_B$ by free bi-graded $A\bideg$-modules. For $N$ a bi-graded left $A\bideg$-module, which becomes a bi-graded left  $B$-module by restriction,  the isomorphism of complexes
\[
\Hom_{A\bideg}(A\bideg \otimes_BC^*_B, N)\cong \Hom_{B}(C^*_B, N)
\]
therefore induces an isomorphism of tri-graded Ext-groups
\[
\Ext^{*,(*,*)}_{A\bideg}(M_B,N)\cong \Ext^{*,(*,*)}_{B}(\Z/l,N)
\]
\end{proof}

We now compute the right hand of the isomorphism in Lemma \ref{lem:ext_iso}. 
By Lemma~\ref{lem:steen_generators},   $B$ is an exterior algebra over $\Z/l$ on $\{Q_r\}$. 
Let $V$ be the $\Z/l$-vector space spanned by $\{Q_r\}$.  We have the following Koszul resolution of $\Z/l$ by free bi-graded $B$-modules: 
 \begin{equation}\label{equ:Koszul}
 \cdots\to \Sym^2V\otimes_{\Z/l}B\to V\otimes_{\Z/l}B \to B\to \Z/l\to 0.
 \end{equation}

\begin{lemma}\label{lem:ExtAlg1}
\begin{enumerate}
\item For any $u$, we have $\Ext^{s,(t-s,u)}_B(\Z/l,H\bideg)=0$  if  $t>2u$.
\item  $\bigoplus_{s, u}\Ext^{s,(2u-s,u)}_B(\Z/l, H\bideg)$ is a polynomial algebra $\Z/l[\{h'_r\}_{r\geq0}]$ over $\Z/l$, where $\deg(h'_r)=(1,(1-2l^r,1-l^r))$. 
\item We have $\Ext^{0,(2u-1,u)}_B(\Z/l,H\bideg)=0$ for $u\neq1$,  $\Ext^{0,(1,1)}_B(\Z/l,H\bideg)=H^{1,1}$,  and the product map
\begin{multline*}
H^{1,1}\otimes\bigoplus_{s, u}\Ext^{s,(2u-s,u)}_B(\Z/l, H\bideg)=
\Ext^{0,(1,1)}_B(\Z/l,H\bideg)\otimes\bigoplus_{s, u}\Ext^{s,(2u-s,u)}_B(\Z/l, H\bideg)\\\to
\bigoplus_{s, u}\Ext^{s,(2u-s+1,u+1)}_B(\Z/l, H\bideg)
\end{multline*}
is surjective.
\end{enumerate}
\end{lemma}

\begin{proof}
Applying $\Hom_B(-,H\bideg)$ to the   Koszul resolution \eqref{equ:Koszul}  of $\Z/l$, one deduces that
$ \Ext^{s,(*,*)}_B(\Z/l, H\bideg)$ is a subquotient of $H\bideg\otimes_{\Z/l}\Sym^sV^\vee$. 
As the basis element $Q_r$ of $V$ has $\deg(Q_{r})=(2l^r-1,l^r-1)$,
$\Sym^sV^\vee$ has bi-degrees $(s-2\sum_{i=1}^s l^{r_i}, s-\sum_{i=1}^s l^{r_i})$, for $ r_1, \ldots, r_s\in \N$. 
The part of $H^{p, q}\otimes\Sym^sV^\vee$ that contributes to $ \Ext^{(s, (t-s,u))}$ satisfies 
\[
(t-s,u)=(p+s-2\sum_{i=1}^s l^{r_i}, q+s-\sum_{i=1}^s l^{r_i}), 
\]
which implies $t-2u=p-2q$. By Theorem \ref{Thm19.3}, we know that $H^{p,q}=0$,  if $p>q$ or if $q<0$. This forces $t\leq 2u$,  and $t=2u$ iff   $p=q=0$. 
This implies assertion (1). 

We now prove (2). Let $\bar{B}\subset B$ be augmentation ideal.  By our calculation above, the part of $H^{p, q}\otimes\Sym^sV^\vee$ that contributes to $ \Ext^{(s, (2u-s,u))}$ satisfies $0=p-2q$. Since $H^{p,q}=0$ for $p>q$, or for $q<0$, this forces $(p,q)=(0,0)$. Since $Q_r$ has bi-degree $(2l^r-1, l^r-1)$, $\bar{B}$ acts by zero on $H^{0,0}$, and since $H^{-1,0}=0$ and $H^{p,q}=0$ for $q<0$, $\bar{B}\cdot H^{*,*}\cap H^{0,0}=\{0\}$. This shows that the differentials in the complex 
\[
\oplus_u\Hom_B^{2u-*,u}(\Sym^*V\otimes B, H^{*,*})=(\Sym^*V^\vee)
\]
are all zero, and thus $\oplus_u\Ext^{s, (2u-s, u)}=\Sym^sV^\vee$. One checks directly that this gives an isomorphism of the Ext-algebra $\oplus_{s, u} \Ext^{s,(2u-s, u)}_B(\Z/l,H^{*,*})$ with the symmetric algebra $\Sym^*(V^\vee)$. 
Thus, letting $h'_r\in\Ext_B^{1,(1-2l^r,1-l^r)}(\Z/l,H^{*,*})$ correspond to the dual of $Q_r$,  $\{h'_r \mid r\geq0\}$ is a set of  polynomial generators over $\bbZ/l$ of $\oplus_{s, u} \Ext^{s,(2u-s, u)}_B(\Z/l,H^{*,*})$, and the unit is given by the canonical identification $\Z/l=H^{0,0}$ 
  in $\Ext^{0,(0,0)}_B(\Z/l,H^{*,*})=\Hom_{\Z/l}(\Z/l,H^{0,0})$. 

The proof of (3) is similar.  The part of $H^{p, q}\otimes\Sym^sV^\vee$ that contributes to $ \Ext^{(s, (2u-1-s,u))}$ satisfies  $-1=p-2q$. The vanishing of $H^{p,q}$ for $p>q$ and $q<0$ implies only $H^{1,1}\otimes(\Sym^sV^\vee)^{2u-s-2,u-1}$ contributes to $\Ext^{(s, (2u-1-s,u))}$. As above,
$\bar{B}\cdot H^{1,1}=0$ for degree reasons, so the differential leaving $H^{1,1}\otimes(\Sym^sV^\vee)^{2u-s-2,u-1}$ is zero, and thus $\Ext^{(s, (2u-1-s,u))}$ is a quotient of 
\[
H^{1,1}\otimes(\Sym^sV^\vee)^{2u-s-2,u-1}=H^{1,1}\otimes \Ext^{(s, (2u-2-s,u-1))}. 
\]
For $s=0$, there is no differential mapping to 
$H^{1,1}\otimes(\Sym^0V^\vee)^{2u-2,u-1}$ and the only non-zero term is for $u=1$, giving
\[
\Ext^{0, (2u-1, u)}_B(\Z/l, H^{*,*})=\begin{cases}0&\text{ for }u\neq1\\ H^{1,1}&\text{ for }u=1
\end{cases}
\]
\end{proof}

Let $N$ be a left $A\bideg$ module. The left $A\bideg\otimes_{\Z/l}A\bideg$ module structure on $N\otimes_{\Z/l} N$ descends to a left  $(A\bideg\otimes_{H\bideg}A\bideg)_r$ module structure on $N\otimes_{H\bideg} N$, which via $\Psi^*$ makes $N\otimes_{H\bideg} N$ a left $A\bideg$ module. We will apply this for  $N=M_B$, and note that the map
\[
\Psi^*_{M_B}:M_B\to M_B\otimes_{H\bideg}M_B
\]
induced by $\Psi^*$ is $A\bideg$ linear. 

\begin{remark}\label{rem:ProductStructureExtGps}
For $N, M, N', M'$ $A\bideg$ modules, and $f:N\to M$, $f':N'\to M'$  $A\bideg$-linear maps, the tensor product $f\otimes f':N\otimes_{\Z/l}N'\to M\otimes_{H\bideg}M'$ descends to an $A\bideg$-linear map $f\otimes_{H\bideg} f':N\otimes_{H\bideg}N'\to M\otimes_{H\bideg}M'$. Thus, if $N$ or $N'$ is flat as an $H\bideg$-module, this induces a map on the $\Ext$-groups
\[
\Ext^{*,(*,*)}_{A\bideg}(N, M)\otimes_{H\bideg}\Ext^{*,(*,*)}_{A\bideg}(N', M')\to
\Ext^{*,(*,*)}_{A\bideg}(N\otimes_{H\bideg}N', M\otimes_{H\bideg}M')
\]
The co-associativity of $\Psi^*$ shows that this product is associative.
\end{remark}
 
Similarly, the coproduct $\Delta:B\to B\otimes_{\Z/l}B$, $\Delta(Q_r)=Q_r\otimes 1+1\otimes Q_r$,   makes $H\bideg\otimes_{\Z/l}H\bideg$ a left $B$ module for which the product $m:H\bideg\otimes_{\Z/l}H\bideg\to H\bideg$ is $B$-linear. This gives $\Ext_B^{*,(*,*)}(\Z/l, H\bideg)$ a tri-graded ring structure via the composition
\[
\Ext_B^{*,(*,*)}(\Z/l, H\bideg)\otimes_{\Z/l}\Ext_B^{*,(*,*)}(\Z/l, H\bideg)\xrightarrow{\boxtimes}
\Ext_B^{*,(*,*)}(\Z/l, H\bideg\otimes_{\Z/l} H\bideg)\xrightarrow{m_*}
\Ext_B^{*,(*,*)}(\Z/l, H\bideg).
\]

{
We consider $A^{**}\otimes_{\Z/l}A^{**}$ as a bi-graded left $H^{**}$-module by $u\cdot(a\otimes b):=ua\otimes b$. Recalling the isomorphism of left $H^{**}$-modules  $\Xi:H^{**}\otimes_{\Z/l}A^{\topo}\to A^{**}$ of Lemma~\ref{lem:steen_generators}, we have the coproduct $\Delta:A^{**}\to A^{**}\otimes_{\Z/l}A^{**}$ defined as 
$H^{**}$-linear extension of   $(\Xi\otimes\Xi)\circ\Delta_{\topo}:A^{\topo}\to A^{**}\otimes_{\Z/l}A^{**}$. $\Delta$ is co-associative, and, letting $\pi:A^{**}\otimes_{\Z/l}A^{**}\to A^{**}\otimes_{H^{**}}A^{**}$ be the canonical surjection, we have $\Psi^*=\pi\circ\Delta$.}

\begin{lemma}\label{lem:MB}
The $A\bideg$-module homomorphism $\Psi^*_{M_B}:M_B\to M_B\otimes_{H\bideg}M_B$ induces a ring structure on $\Ext_{A\bideg}^{*(*,*)}(M_B,H\bideg)$. 

In addition, the isomorphism in Lemma~\ref{lem:ext_iso} is an isomorphism of graded algebras.
\end{lemma}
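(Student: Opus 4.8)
The plan is to upgrade the comparison isomorphism of Lemma~\ref{lem:ext_iso} to an isomorphism of algebras, using the fact that both sides carry ring structures coming from coproducts that are compatible with the change-of-rings.

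First I would make the ring structure on $\Ext_{A\bideg}^{*,(*,*)}(M_B, H\bideg)$ precise. The diagonal $M_B \to M_B \otimes_{H\bideg} M_B$ constructed just above the statement (the descent of $\Psi^*$) makes $M_B$ a coalgebra-like object in the category of $A\bideg$-modules, in the sense that the $A\bideg$-action on the target is the one induced from $(A\bideg\otimes_{H\bideg} A\bideg)_r$. Applying $\Ext_{A\bideg}(-, H\bideg)$, together with the external product $\Ext_{A\bideg}(M, H\bideg)\otimes \Ext_{A\bideg}(N,H\bideg)\to \Ext_{A\bideg\otimes_{H\bideg} A\bideg}(M\otimes_{H\bideg} N, H\bideg\otimes_{H\bideg} H\bideg)$ and the multiplication $H\bideg\otimes_{H\bideg}H\bideg\to H\bideg$, yields a pairing on $\Ext_{A\bideg}^{*,(*,*)}(M_B,H\bideg)$. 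Coassociativity and cocommutativity of $\Psi^*$ (inherited from the Cartan formulas in Lemma~\ref{lem:steen_generators}(4)) give associativity and graded-commutativity of this pairing, and the counit gives a unit; this establishes the first assertion. The only subtlety here is the usual one flagged in \cite{Voev03}: because $H\bideg$ is not central in $A\bideg$, one must work throughout with the operator-like subring $(A\bideg\otimes_{H\bideg}A\bideg)_r$ and check that the external Ext-product lands in the correct module, but this is exactly the formalism already set up in the paragraph preceding the lemma.

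Next I would transport this along the change-of-rings. The algebra $B \subset A\bideg$ likewise has a coproduct (the restriction of $\Psi^*$, which preserves $B$ since the $Q_r$ span a sub-coalgebra modulo the relevant terms), making $\Z/l[\rho]$ an algebra and hence $\Ext_B^{*,(*,*)}(\Z/l[\rho], H\bideg)$ a ring by the same recipe. The isomorphism of Lemma~\ref{lem:ext_iso}, following \cite[Lemma~8]{Nov}, is induced by the map $M_B \otimes_{A\bideg}^{\mathbb{L}} H\bideg \simeq \Z/l[\rho]\otimes_B^{\mathbb{L}} H\bideg$ coming from $A\bideg \cong M_B \otimes_{\Z/l} B$ as a right $B$-module (Lemma~\ref{lem:steen_generators}(2)); the point is that this equivalence is visibly compatible with the two diagonals, because both are restrictions/descents of the single coproduct $\Psi^*$ on $A\bideg$. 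Hence the induced map on $\Ext$ intertwines the two products, and Lemma~\ref{lem:ext_iso} becomes an isomorphism of graded algebras.

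I expect the main obstacle to be purely bookkeeping rather than conceptual: verifying that every map in the chain of comparisons ($M_B \to M_B\otimes_{H\bideg}M_B$, the reduction $A\bideg\to M_B$, the factorization through $B$, and the external $\Ext$-product) respects the operator-like structure over $(A\bideg\otimes_{H\bideg}A\bideg)_r$ and commutes on the nose, so that no sign or twist obstructs multiplicativity. Once the compatibility of the diagonals is isolated, associativity, commutativity and unitality are formal consequences of the coalgebra identities for $\Psi^*$, which in turn follow from Lemma~\ref{lem:steen_generators}.
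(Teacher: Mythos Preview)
Your proposal is correct and follows essentially the same route as the paper: construct the product on $\Ext_{A\bideg}(M_B,H\bideg)$ as the composite of an external/K\"unneth pairing with the map induced by the descended coproduct $M_B\to M_B\otimes_{H\bideg}M_B$, derive associativity from coassociativity of $\Psi^*$, and then observe that since $B$ is closed under the coproduct of $A\bideg$ the change-of-rings isomorphism of Lemma~\ref{lem:ext_iso} respects the two products. The paper's argument is in fact terser than yours and does not spell out the change-of-rings compatibility in the derived-tensor language you use, but the content is the same.
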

\begin{proof}
We note that $M_B$ is flat as an $H\bideg$-module. We have the external product
\begin{equation}\label{equ:kunneth}
\Ext_{A\bideg}^{*(*,*)}(M_B,H\bideg)\otimes_{H\bideg}\Ext_{A\bideg}^{*(*,*)}(M_B,H\bideg)\to \Ext_{A\bideg}^{*(*,*)}(M_B\otimes_{H\bideg}M_B,H\bideg).
\end{equation} 
and pullback by $\Psi^*_{M_B}:M_B\to M_B\otimes_{H\bideg}M_B$ gives the map\begin{equation}\label{equ:multi}
\Ext_{A\bideg}^{*(*,*)}(M_B\otimes_{H\bideg}M_B,H\bideg)\to 
\Ext_{A\bideg}^{*(*,*)}(M_B,H\bideg).
\end{equation}
The product on $\Ext_{A\bideg}^{*(*,*)}(M_B,H\bideg)$ we are looking for is the composite of \eqref{equ:kunneth} and \eqref{equ:multi}. The associativity follows from the coassociativity of $\Psi^*$ and $\Psi^*_{M_B}$.

The ring structure on $\Ext^{*,(*, *)}_B(\Z/l, H\bideg)$ is similarly induced from the external product (over $\Z/l$) on $\Ext$-groups and  the coproduct for $B$.

Letting $P_*\to \Z/l$ be a $B$-projective resolution of $\Z/l$, the isomorphism  in Lemma~\ref{lem:ext_iso} is induced by the isomorphism of adjunction
\[
\theta_{P_*}: \Hom_{B}(P_*, H\bideg)\xrightarrow{\sim} \Hom_{A\bideg}(A\otimes_BP_*, H\bideg)
\]
We have the following diagram
\[
\xymatrix{
\Hom_{B}(P_*, H\bideg)\otimes_{\Z/l} \Hom_{B}(P_*, H\bideg)\ar[d]^\wr_{\theta_{P_*}\otimes\theta_{P_*}}\ar[r]^\boxtimes&\Hom_B(P_*\otimes_BP_*, H\bideg)\ar[d]_\wr^{\theta_{P_*\otimes_BP_*}}\\
\Hom_{A\bideg}(A\bideg\otimes_BP_*, H\bideg)\otimes_{\Z/l} \Hom_{A\bideg}(A\bideg\otimes_BP_*, H\bideg)\ar[r]_-\boxtimes&
\Hom_{A\bideg}(A\bideg\otimes_B(P_*\otimes_BP_*), H\bideg)
}
\]
where the two products $\boxtimes$ are the maps induced by the respective external products (over $\Z/l$) and the respective coproducts $\Delta_B:B\to B\otimes_{\Z/l}B$ and $\Delta:A\bideg\to A\bideg\otimes_{\Z/l} A\bideg$. In fact,  the subalgebra $B$ is closed under $\Delta$, {and as $\Xi(Q_i^{\topo})=Q_i$,}  $\Delta_B$ is just the restriction of $\Delta$ to $B$.  This readily implies that the diagram commutes. In addition, the coproduct $\Psi^*_{M_B}$ is the map induced by $\Delta$ via the isomorphism $M_B\cong A\otimes_B\Z/l$. The lemma follows from this and the commutativity of the diagram.
\end{proof}

\subsection{The Ext-algebras}\label{subsec:Ext}

Let $H\bideg:=H\bideg(k, \ZZ/l)$ and for $X\in \SH(k)$, write $H\bideg(X)$ for $H\bideg(X,\Z/l)$. Suppose we have $X, X'\in \SH(k)$ such that the canonical map
$H\bideg(X)\otimes_{H\bideg}H\bideg(X')\to H\bideg(X\wedge X')$ is an isomorphism and with $H\bideg(X)$ or $H\bideg(X')$ flat over $H\bideg$. We have the product defined Remark~\ref{rem:ProductStructureExtGps}
\[
\Ext_{A\bideg}(H\bideg(X), H\bideg)\otimes\Ext_{A\bideg}(H\bideg(X'), H\bideg)\to
\Ext_{A\bideg}(H\bideg(X)\otimes_{H\bideg}H\bideg(X'), H\bideg);
\]
composing with the inverse of the isomorphism $H\bideg(X)\otimes_{H\bideg}H\bideg(X')\to H\bideg(X\wedge X')$ defines the product
\[
\Ext_{A\bideg}(H\bideg(X), H\bideg)\otimes\Ext_{A\bideg}(H\bideg(X'), H\bideg)\to
\Ext_{A\bideg}(H\bideg(X\wedge X'), H\bideg).
\]
If  we take $X=X'$ and $X$ is a motivic ring spectrum with multiplication $\mu_X:X\wedge X\to X$, we may compose with $(\mu_X^*)^*:\Ext_{A\bideg}(H\bideg(X\wedge X), H\bideg)\to
\Ext_{A\bideg}(H\bideg(X), H\bideg)$ to give $\Ext_{A\bideg}(H\bideg(X), H\bideg)$ the structure of an associative $\Z/l$-algebra.

In this section, we study the cases $X=\MGL, \MSL$  in detail.  We recall from Theorem~\ref{thm:MGLMSLCohHom} that for $X=\MGL, \MSL$,  $H^{**}(X)$ is a free $H^{**}$-modules and from Corollary~\ref{cor:MGLMSLCohHom}, that the natural map $H^{**}(X)\otimes_{H\bideg} H^{**}(X)\to H^{**}(X\wedge X)$ is an isomorphism. Thus, we have a natural algebra structure on $\Ext_{A\bideg}(H\bideg(X), H\bideg)$.

We say a partition $\yu=(\yu_1,\ldots,\yu_k \ldots)$ is \textit{$l$-adic} if for some $i$,  $\yu_i$ is of the form $\yu_i=l^s-1$ for some $s\ge1$. We say a partition $\yu=(\yu_1,\ldots,\yu_k,  \ldots)$ is \textit{$l$-admissible},  if for each $r\ge0$, the number of terms $i$ with $\yu_i=l^r$ is a multiple of  $l$ (possibly zero)\footnote{Note the mistranslation: ``divides'' for  ``is a multiple of'' in the English translation of \cite[Definition, pg. 29]{Nov}}. 
The set of all non-$l$-adic partitions will be denoted by $P$, and the set of all $l$-admissible, non-$l$-adic partitions by $P_l$.

We use the correspondence as described in \S\ref{subsec:MGLMSL} of partitions $I$ with symmetric functions $u_I$, with $(k)$ corresponding to $u_{(k)}=\sum_it_i^k$.  For later use, we record the following fact:

\begin{lemma}\label{lem:NovikovClass} We have an identity in $H^{**}(\MSL,\Z_l)=H^{**}(\MGL,\Z_l)/(c_1)$ of the form
\[
u_{(l^{r+1})}=l\cdot (\lambda_1\cdot u_{(l^r, \ldots, l^r)}+\sum_{i=2}^s\lambda_i\cdot u_{\omega_i}) 
\]
with $\lambda_1$ prime to $l$ and the $\omega_i$ $l$-admissible partitions of $l^{r+1}$, distinct from $(l^r, \ldots, l^r)$.
\end{lemma}

\begin{proof} Since there are only finitely many $l$-admissible partitions of $l^{r+1}$, it suffices to prove the result  in $H^{**}(\MGL,\Z/l^{n+1})/(c_1)$ for every $n$. We proceed by induction on $n$, starting with $n=1$.

Recall Kummer's theorem \cite{Kummer}: Let $l$ be a prime number,  $m_1,\ldots, m_r$ positive integers and let $m=\sum_{i=1}^rm_i$.  The maximal power of $l$ dividing $m!/\prod_{i=1}^r m_i!$ is the number of carries in performing the base $l$ addition $m_1+\cdots+m_r=m$. 
This yields the identity
\[
u_{(l^{r+1})}-(\sum_iu_i)^{l^{r+1}}=l\cdot\sum_{\alpha=(\alpha_1\cdot l^r,\ldots, \alpha_s\cdot l^r)} r_\alpha\cdot u_\alpha +l^2\cdot \sum_\beta r_\beta\cdot u_\beta
\]
with $r_\alpha, r_\beta\in \Z$ and $r_\alpha$  prime to $l$, and where  $1\le \alpha_j\le l-1$ and $\sum_j\alpha_j=l$. For $\alpha$ of this form with some  $\alpha_i\neq 1$, suppose $\alpha_j=1$ for $j=1,\ldots, m$ and $1<\alpha_j\le l-1$ for $j=m+1,\ldots, s$. Then $u_\alpha$ is not $l$-admissible, but we see from the algorithm described in Lemma~\ref{lem:division} that modulo the ideal $(\sum_i u_i, l)$, $u_\alpha$ is equivalent to a sum of  symmetric functions corresponding to $l$-admissible partitions with no occurence of $l^r$. Since Proposition~\ref{prop:Admissible} tells us that the  symmetric functions $u_\omega$ with $\omega$ $l$-admissible form a basis of the quotient ring of symmetric functions,  $\Z/l[u_1, u_2,\ldots]^{S_\infty}/(\sum_i u_i)$, this proves the lemma for $n=1$;  for $n>1$, we use  induction on  $n$ and Proposition~\ref{prop:Admissible}.
\end{proof} 

\begin{definition}\label{def:NovikovClass} For a prime number $l$, we define the virtual partition $\yu_r$   (with $\Z_l$-coeffiecients) by
\[
\yu_r=\lambda_1\cdot (l^r,\ldots, l^r)+ \sum_{i=2}^s\lambda_i\cdot {\omega_i},
\]
that is, 
\[
u_{\yu_r}=\lambda_1\cdot u_{(l^r,\ldots, l^r)}+\sum_{i=2}^s\lambda_i\cdot u_{\omega_i},
\]
following the notation in Lemma~\ref{lem:NovikovClass}.
\end{definition}

The main result of this section is the following. 
\begin{prop}
\label{prop:ext_alg_MGL}
\begin{enumerate}
\item We have $\Ext^{s,(t-s,u)}_{A\bideg}(H^{*, *}(\MGL),H\bideg)=0$, for $t>2u$. Moreover, the algebra $\bigoplus_u\bigoplus_s\Ext^{s,(2u-s,u)}_{A\bideg}(H^{*, *}(\MGL),H\bideg)$ is isomorphic to the polynomial ring over $\bbZ/l$ in the following generators:
 \begin{align*}
&\ h'_r\in \Ext^{1,(1-2l^r,1- l^r)}_{A\bideg}(H^{*, *}(\MGL),H\bideg),\ r\geq0;\\
&z_{(k)}\in\Ext^{0,(-2k,-k)}_{A\bideg}(H^{*, *}(\MGL),H\bideg),\  k\ge1, k\text{ not of the form } l^r-1,\text{ for all } r\geq0,
\end{align*}
and with unit $1\in\Ext^{0,(0,0)}_{A\bideg}(H^{*, *}(\MGL),H\bideg)$ corresponding to the augmentation $H^{*, *}(\MGL)\to H\bideg$. Finally,   $\Ext^{0, (2u-1,u)}_{A\bideg}(H^{*, *}(\MGL),H\bideg)=0$ for $u\neq1$, $\Ext^{0, (1,1)}_{A\bideg}(H^{*, *}(\MGL),H\bideg)=H^{1,1}$, and the product map
\begin{multline*}
H^{1,1}\otimes_{\Z/l}\bigoplus_{u,s}\Ext^{s,(2u-s,u)}_{A\bideg}(H^{*, *}(\MGL),H\bideg)\\=
\Ext^{0, (1,1)}_{A\bideg}(H^{*, *}(\MGL),H\bideg)\otimes_{\Z/l}\bigoplus_{u,s}\Ext^{s,(2u-s,u)}_{A\bideg}(H^{*, *}(\MGL),H\bideg)\\\to \bigoplus_{u,s}\Ext^{s,(2u-s+1,u+1)}_{A\bideg}(H^{*, *}(\MGL),H\bideg)
\end{multline*}
is surjective.
\item We have $\Ext^{s,(t-s,u)}_{A\bideg}(H^{*, *}(\MSL),H\bideg)=0$ for $t>2u$. 

Moreover, the algebra $\bigoplus_{u,s}\Ext^{s,(2u-s,u)}_{A\bideg}(H^{*, *}(\MSL),H\bideg)$ is isomorphic to the polynomial ring over $H\bideg$ in the following generators:
\begin{align*}
&h'_r\in \Ext^{1,(1-2l^r, 1-l^r)}_{A\bideg}(H^{*, *}(\MSL),H\bideg),\ r\geq0;\\
&z_{(k)}\in\Ext^{0,(-2k,-k)}_{A\bideg}(H^{*, *}(\MSL),H\bideg),\ k\ge2, k\text{ not of the form } l^r,\ l^r-1,\text{ for all } r\geq0;\\
&z_{(\yu_r)}\in\Ext^{0,(-2l^{r+1},-l^{r+1})}_{A\bideg}(H^{*, *}(\MSL),H\bideg),\ r\geq0, 
\end{align*}
and with unit $1\in\Ext^{0,(0,0)}_{A\bideg}(H^{*, *}(\MSL),H\bideg)$ corresponding to the augmentation $H^{*, *}(\MSL)\to H\bideg$. Finally, we have $\Ext^{0, (2u-1,u)}_{A\bideg}(H^{*, *}(\MSL),H\bideg)=0$ for $u\neq1$, $\Ext^{0, (1,1)}_{A\bideg}(H^{*, *}(\MSL),H\bideg)=H^{1,1}$, and the product map
\begin{multline*}
H^{1,1}\otimes_{\Z/l}\bigoplus_{u,s}\Ext^{s,(2u-s,u)}_{A\bideg}(H^{*, *}(\MSL),H\bideg)\\=
\Ext^{0, (1,1)}_{A\bideg}(H^{*, *}(\MSL),H\bideg)\otimes_{\Z/l}\bigoplus_{u,s}\Ext^{s,(2u-s,u)}_{A\bideg}(H^{*, *}(\MSL),H\bideg)\\\to \bigoplus_{u,s}\Ext^{s,(2u-s+1,u+1)}_{A\bideg}(H^{*, *}(\MSL),H\bideg)
\end{multline*}
is surjective.
\end{enumerate}
\end{prop}

The rest of this section is devoted to the proof of Proposition \ref{prop:ext_alg_MGL}, following an argument similar to \cite{Nov}. We note that (3) follows from (1) and (2) and Lemmas~\ref{lem:ext_iso}, \ref{lem:ExtAlg1} and \ref{lem:MB}.

\begin{remark} As we shall see from the proof of Proposition \ref{prop:ext_alg_MGL} below, the generator $z_{(k)}$ corresponds to the dual of the generator $u_{(k)}$ via the decomposition of $H^{*, *}(\MGL)$, resp. $H^{*, *}(\MSL)$ described in Lemma~\ref{lem:MSL_Mbeta}. Similarly, the generator $z_{\yu_r}$ is the dual of the   generator $u_{\yu_r}$. Finally, the generator $h_r$ corresponds to the generator of  $\Ext^{1,(1-2l^r, 1-l^r)}_{A\bideg}(M_B\cdot u_{(0)},H\bideg)$ via this same decomposition and using the description of the $\Ext$-group given in Lemma~\ref{lem:ExtAlg1} and Lemma~\ref{lem:ext_iso}; $u_{(0)}$ is the unit in $H^{*, *}(\MGL)$.
\end{remark}

\Omit{
We have the following isomorphism of left $A^{*, *}$ modules
\begin{equation}
 A\bideg/ A\bideg(Q_0, Q_1, \ldots )\otimes h(\mathbb{L})^{\vee} \to H^{*, *}(\MGL), 
\end{equation}
where $i: h(\mathbb{L})\subset \Z/l[b_1, b_2, \ldots]$ is a polynomial subring 
$ \Z/l[b_n\mid  n\neq l^r-1]$, and $h(\mathbb{L})^\vee\subset H^{*, *}(\MGL)$ is dual to the embedding $i$. 
}

By Theorem~\ref{thm:MGLMSLCohHom}, $H{\bideg}(\MGL)$  is the polynomial ring $H{\bideg}[c_1, c_2, \ldots]$, where $c_n\in H^{2n,n}(\MGL)$ corresponds via the Thom isomorphism to the $n$th Chern class of  the universal bundle over $\BGL_N$ for $N>>0$. Associating as usual $c_n$ to the $n$th elementary symmetric function $\sigma_n$, each partition $\yu$ gives rise to an element $u_{\yu}\in H^{*, *}(\MGL)$ of bidegree $(2|\yu|,|\yu|)$, namely, the polynomial in the $c_n$ with $\Z$-coefficients which gives  the monomial symmetric function associated to $\yu$ by substituting  $\sigma_n$ for $c_n$. 

We note that  $Q_i(u_{\yu})=0$  for all $i$. Indeed, it follows from the computation of $H{\bideg}(\MGL)$ as a polynomial ring over $H{\bideg}$ and the vanishing of motivic cohomology, $H^{a,b}(k)=0$ for $a>b$ or $b<0$ (Theorem \ref{Thm19.3}) that $H^{2*+1,*}(\MGL)=0$. Since $Q_i(u_{\yu})$ has bi-degree $(2(l^i+|\yu|)+1,l^i+|\yu|)$, we have $Q_i(u_{\yu})=0$.

This gives us the $A\bideg$--module homomorphism 
\[
\Phi_{\yu}: M_{B} u_{\yu} \to H^{*, *}(\MGL), \,\ a\cdot u_{\yu} \mapsto a(u_{\yu}).
\]

Similarly, Theorem~\ref{thm:MGLMSLCohHom} says that the canonical map $\MSL\to \MGL$ induces an isomorphism
\[
H\bideg(\MSL)\cong H\bideg(\MGL)/(c_1).
\]
{By the same degree reasoning as for $\MGL$}
 we have the $A\bideg$-module homomorphism
$\Phi_{\yu}: M_B u_{\yu}\to H^{*, *}(\MSL)$ (see \cite[Lemma 16]{Nov}\footnote{This result is stated without proof in \cite{Nov}; we give a proof in Appendix~\ref{App:Novikov}.}). 
As in the topological setting (see \cite[Lemma 4, Lemma 16]{Nov}, \cite[Theorem 2]{Mil2}) we have the following result.
\begin{lemma}\label{lem:MSL_Mbeta}
The maps $\Phi_{\yu}$ induce  isomorphisms of $A\bideg$-modules 
\[
H^{*, *}(\MGL)\cong\bigoplus_{\yu\in P}M_B u_{\yu}, \,\
H^{*, *}(\MSL)\cong\bigoplus_{\yu\in P_l}M_B u_{\yu}.
\]
\end{lemma}
\begin{proof}
The arguments for the two isomorphisms are similar; we first give the details for $\MSL$.

{
We have the $\Z/l$-subalgebra $A^{**}_0\subset A^{**}$ generated by the $P^i$ and $\beta$; by  Lemma~\ref{lem:steen_generators}, $\Xi:A^{\topo}\to A^{**}$ induces an isomorphism $A^{\topo}\to A^{**}_0$ and $A^{**}\cong H^{**}\otimes_{\Z/l}A^{**}_0$.  Similarly, we have the $\Z/l$-subalgebra  $M_{B0}\subset M_B$ generated by the $P^i$ and the subalgebra $H^{**}_0(\MSL)= 
\Z/l[c_1, c_2,\ldots]\subset H^{**}(\MSL)$. By reason of degree, $A^{**}_0$ acts on $H^{**}_0(\MSL)$, with $\beta$ acting trivially,  and the map $M_B u_{\yu}\to H^{*, *}(\MSL)$ restricts to a map of $A^{**}_0$-modules $M_{B0} u_{\yu}\to  H^{**}_0(\MSL)$. As left $H^{**}$-modules, we have   $H^{**}(\MSL)=H^{**}\otimes_{\Z/l}H^{**}_0(\MSL)$ and $M_B=H^{**}\otimes_{\Z/l}M_{B0}$.}

{
The map  $\Phi_{\yu}$ induces the map $\Phi_{\yu}^0:M_{B0}\cdot u_{\yu}\to H^{**}_0(\MSL)$, giving the map 
\[
\Phi^0:=\prod_{\yu\in P_l}\Phi_{\yu}^0:\bigoplus_{\yu\in P_l}M_{B0}\cdot u_{\yu}\to H^{**}_0(\MSL).
\] 
Suppose that $\Phi^0$ is an isomorphism of $\Z/l$-vector spaces. As $M_B=H^{**}\otimes_{\Z/l}M_{B0}$, $H^{**}(\MSL)=H^{**}(k)\otimes_{\Z/l}H^{**}_0(\MSL)$ and  $\Phi_{\yu}=\id_{H^{**}(k)}\otimes \Phi^0_{\yu}$, it follows that $\Phi:=\prod_{\yu\in P_l}\Phi_{\yu}:\bigoplus_{\yu\in P_l}M_B(k)\cdot u_{\yu}\to H^{**}(k)(\MSL)$ is an isomorphism of $\Z/l$-vector spaces and hence an isomorphism of $A^{**}$-modules. We proceed to show that $\Phi^0$ is an isomorphism of $\Z/l$-vector spaces.}

{
Let $H^*_{\topo}(-)$ denote mod $l$ singular cohomology $H^*(-, \Z/l)$. We have $H^*_{\topo}(\BU)=\Z/l[c_1^{\topo}, c_2^{\topo},\ldots]=H^*_{\topo}(\MU)$ with $c_n^{\topo}$ coming from the $n$th Chern class of the universal bundle $V_N\to \BU_N$ for $N>>0$; similarly
$H^*_{\topo}(\BSU)=\Z/l[c_1^{\topo}, c_2^{\topo},\ldots]/(c_1)=H^*_{\topo}(\MSU)$. We thus have the isomorphism $\rho:H^*_{\topo}(\BSU)\cong H^{**}_0(\MSL)$ sending $c_n^{\topo}\to c_n$.}

{
 We claim that $\rho$ is an isomorphisms  of left modules for $A^{\topo}\cong A^{**}_0$, in other words, 
\[
\rho(P^i_{\topo}(c_n^{\topo}))= P^i(c_n)
\]
Indeed,  the action of   $P^i$ on $H^{**}_0(\MSL)$ arises from its action on $H^{**}(\Gr(n, m+n), \Z/l)$, with $c_j$ corresponding to the product of Chern classes  $c_j(\sV_{n,m})\cdot c_n(\sV_{n,m})$. By the classical splitting principle, the value $P^i((c_j\cdot c_n)(\sV_{n,m}))$ is determined by $P^i((\sigma_j\cdot \sigma_n)(\xi_1,\ldots, \xi_n))$, where $\xi_r=c_1(L_r)$,  $L_1,\ldots, L_n$ are the tautological line bundles on the full flag variety $Fl(\sV_{n,m})$ over $\Gr(n, m+n)$, and $\sigma_j, \sigma_n$ are the  elementary symmetric functions. By the Cartan formula \cite[Proposition 9.7]{Voev03}, $P^i((\sigma_j\cdot \sigma_n)(\xi_1,\ldots, \xi_n))$ is determined by the values $P^s(\xi_r)$, $s=0, 1,\ldots$,  and by Lemma~\ref{lem:steen_generators}(6),  we have $P^0(\xi_m)=\xi_m$, $P^1(\xi_m)=\xi_m^l$ and $P^s(\xi_m)=0$ for $s\ge2$.}

{
Exactly the same argument shows that $P^i_{\topo}(c_n^{\topo})$ is determined by the classes
$P^s_{\topo}(\xi_r^{\topo})$, where  $\xi_r^{\topo}=c_1^{\topo}(L_r^{\topo})$ and $L_r^{\topo}$ is the $\CC$-line bundle on the complex flag manifold $Fl(\sV^{\CC}_{n,m})$ corresponding to $L_r$.  Again by Lemma~\ref{lem:steen_generators}(6),  we have $P^0_{\topo}(\xi^{\topo}_m)=\xi^{\topo}_m$, $P^1_{\topo}(\xi^{\topo}_m)=(\xi^{\topo}_m)^l$ and $P^s_{\topo}(\xi^{\topo}_m)=0$ for $s\ge2$. Thus, if we express $P^i_{\topo}(c_n^{\topo})$ as a polynomial in the $c_j^{\topo}= \sigma_j(\xi_1^{\topo},\ldots, \xi^{\topo}_n)$, exactly the same polynomial in the  $c_j$ will yield $P^i(c_n)$, that is, $\rho(P^i_{\topo}(c_n^{\topo}))=P^i(c_n)$, as claimed.}

{
We have the $A^{\topo}$-module $M^{\topo}_B:=A^{\topo}/A^{\topo}(Q_0^{\topo}, Q_1^{\topo},\ldots)$, the $A^{\topo}$-module $M^{\topo}_B u_{\yu}$, isomorphic to $M^{\topo}_B$, and the   $A^{\topo}$-module $\bigoplus_{\yu\in P_I}M_{B}^{\topo} u_{\yu}$. The map $\Xi$ induces the isomorphism $M^{\topo}_B\to M_{B0}$ of $A^{\topo}=A^{**}_0$-modules. Novikov defines the map $\Phi_{\yu}^{\topo}:M_{B}^{\topo} u_{\yu}\to H^{\topo}({\MU})$  exactly as our map $\Phi_{\yu}^0$ and shows \cite[Lemma 16]{Nov} that 
\[
\Phi^{\topo}:=\prod_{\yu\in P_l} \Phi_{\yu}^{\topo}:\bigoplus_{\yu\in P_l} M_{B}^{\topo} u_{\yu}\to 
H^{\topo}({\MSU})
\]
is an isomorphism of $A^{\topo}$--modules. This gives us the commutative diagram
\[
\xymatrixcolsep{40pt}
\xymatrix{
\bigoplus_{\yu\in P_l} M_{B}^{\topo} u_{\yu}\ar[d]^\wr_{\Xi} \ar[r]_\sim^{\prod_{\yu\in P_l}\Phi_{\yu}^{\topo}}&H^{\topo}({\MSU})\ar[d]^{\rho}_\wr\\
\bigoplus_{\yu\in P_l} M_{B0} u_{\yu}  \ar[r]_{\prod_{\yu\in P_l}\Phi_{\yu}^0}&H^{**}_0({\MSL)}
}
\]
which shows that $\Phi^0$ is an isomorphism of    $A^{**}_0$-modules and hence $\Phi:\bigoplus_{\yu\in P_l}M_B\cdot u_{\yu}\to H^{**}(\MSL)$ is an isomorphism of $A^{**}$-modules.}

{
The proof for $\MGL$ is the same, using \cite[Lemma 4]{Nov} or \cite[Theorem 2]{Mil2}  to handle the parallel case of $H^*(\MU)$.}
\end{proof}

\begin{remark}\label{rem:MultStruct}
$\MSL$ is a ring spectrum \cite[Theorem 4.3]{PW}. As in Lemma~\ref{lem:MB} and using Corollary~\ref{cor:MGLMSLCohHom},   the multiplication $\MSL\wedge\MSL\to \MSL$ induces a map $H^{*, *}(\MSL) \to H^{*,*}(\MSL\wedge \MSL)\cong H^{*, *}(\MSL) \otimes_{H^{*, *}} H^{*, *}(\MSL)$ of $A\bideg$-modules.
By Lemma \ref{lem:MSL_Mbeta}, this map induces the bottom map of the following diagram
\[\xymatrix@R=1.5em
{
H^{*, *}(\MSL) \ar[r]^(0.4){\Delta}& H^{*, *}(\MSL) \otimes_{H^{*, *}} H^{*, *}(\MSL)\\
\bigoplus_{\yu\in P_l}M_B u_{\yu} \ar[r]^(0.3){\Delta}\ar[u]^{\Phi}_\wr
&(\bigoplus_{\yu\in P_l}M_B u_{\yu})\otimes_{H\bideg} (\bigoplus_{\yu\in P_l}M_B u_{\yu})\ar[u]_{\Phi\otimes\Phi}^\wr, 
}\]
Comparing with the similarly defined map 
\[
\Delta_{\topo}:\bigoplus_{\yu\in P_l}M_B^{\topo} u_{\yu} 
\to (\bigoplus_{\yu\in P_l}M_B^{\topo} u_{\yu})\otimes_{\Z/l} (\bigoplus_{\yu\in P_l}M_B^{\topo} u_{\yu}) 
\]
via $\Xi$ and $\rho$, as in the proof of Lemma \ref{lem:MSL_Mbeta},
  it follows from the Cartan formula \cite[Lemma 7]{Nov} that
\begin{equation}\label{eqn:CartanYu}
\Delta(u_{\yu})=\sum_{(\yu_1, \yu_2)=\yu,\yu_1\neq\yu_2}[u_{\yu_1}\otimes u_{\yu_2}+u_{\yu_2}\otimes u_{\yu_1}]+\sum_{(\yu_1,\yu_1)=\yu}u_{\yu_1}\otimes u_{\yu_1}
\end{equation}
\end{remark}


Let $z_{\yu}\in\Ext^{0,(-2|\yu|,-|\yu|)}_{A\bideg}(H^{*, *}(\MSL),H\bideg)=\Hom^{(-2|\yu|,-|\yu|)}_{A\bideg}(H^{*, *}(\MSL),H\bideg)$ be elements such that $(z_{\yu},u_{\yu'})=\delta_{\yu,\yu'}$.  The relation \eqref{eqn:CartanYu} gives the dual relation $z_{\yu}z_{\yu'}=z_{(\yu,\yu')}$  in the Ext-algebra $\Ext^{0,(2*,*)}_{A\bideg}(H^{*, *}(\MSL),H\bideg)$.  Proposition \ref{prop:ext_alg_MGL} then follows from the Lemmas~\ref{lem:ext_iso}-\ref{lem:MSL_Mbeta}, as the $z_{(k)}$, $k\neq l^r-1$ are the indecomposable elements  in the polynomial ring  $\Z_l[\{z_{\yu}\mid \yu\in P\}]$, and the $z_{(k)}$, $k\neq l^r-1, l^r$, and $z_{\yu_r}$, $r\ge0$,  are the indecomposable elements in  $\Z_l[\{z_{\yu}\mid \yu\in P_l\}]$. Alternatively,  one can rely on the result in topology \cite[Lemma 17]{Nov} and a comparison of the Ext-algebra $Ext^{0,(2*,*)}_{A\bideg}(H^{*, *}(\MSL),H\bideg)$ with its topological counterpart, as in the proof of Lemma~\ref{lem:MSL_Mbeta}.

\section{The motivic Adams spectral sequence  {for} $\MSL$}
\label{sec:MASS}

We remind the reader that $p$ denotes the exponential characteristic of $k$ and $l$ will be an odd prime different from $p$.

\subsection{Some completions of $\MGL$}\label{subsec:completion}

Letting $X\in \SH(k)$ be a motivic spectrum, we construct a tower $C_{X}$ under $X$ in the following way.
Let $E\in \SH(k)$ be   {a motivic commutative ring spectrum.}
Let $\overline E$ be the homotopy fiber of ${\bbS_k} \to   E${, giving us the distinguished} triangle 
\begin{equation}\label{eqn:triang_E}
\overline E\to {\bbS_k}\to E.
\end{equation}
Let $\overline{E}^{s}$ be $\overline{E}\wedge\cdots\wedge\overline{E}$ ($s$-times).
Smashing  \eqref{eqn:triang_E} with $\overline E^{\wedge s}\wedge X$, we get
\begin{equation}\label{eqn:triang_Es}
\overline E^{\wedge s+1} \wedge X \to \overline E^{\wedge s} \wedge X \to 
E\wedge \overline E^{\wedge s} \wedge X. 
\end{equation}
Write $X_s:=\overline E^{\wedge s} \wedge X$, $W_s:= E\wedge \overline E^{\wedge s}$, and $W_s(X)=E\wedge X_s=W_s  \wedge X$, the above triangle becomes
\[
X_{s+1} \to  X_s \to  W_s(X). 
\]
and we have the tower and homotopy cofiber sequences
\begin{equation}\label{eqn:AdamsTower0}
 \xymatrix{
 \ldots\ar[r]&X_{s+1}\ar[d]\ar[r]&X_s\ar[r]\ar[d]&\ldots\ar[r]&X_1\ar[r]\ar[d]&X_0\ar[d]\ar@{=}[r]&X\\
 &W_{s+1}(X)&W_s(X)&&W_1(X)&W_0(X)
 }
 \end{equation}

Let $C_{s-1}(X)$ be the homotopy cofiber of $X_s\to  X_0$, $C_{-1}(X)=0$. There are induced maps $C_{s}(X) \to C_{s-1}(X)$ with fiber $W_s(X)$. One gets a tower under $X$, with homotopy fiber sequences,  of the form
\begin{equation}\label{eqn:AdamsTower}
\xymatrix{
&&W_2(X)\ar[d]&W_1(X)\ar[d]&W_0(X)\ar@{=}[d]\\
X \ar[r]& \dots  \ar[r]& C_2(X)  \ar[r]& C_1(X)  \ar[r]& C_0(X)\ar[r]&0
}
\end{equation} 
The homotopy limit of the above tower is called the {\em $E$-nilpotent completion} of $X$, denoted by $X^{\wedge}_{E}$.

We have the stable algebraic Hopf map $\eta:\Sigma^{1,1}\mS_k\to \mS_k$, induced by the unstable version $\A^2\setminus\{0\}\to \P^1$, $(x,y)\mapsto [x:y]$ and the isomorphisms $(\A^2\setminus\{0\},(0,1))\cong S^{3,2}$, $(\P^1,\infty)\cong S^{2,1}$. We let $\mS_k/\eta^n$ denote the homotopy cofiber of $\eta^n:\Sigma^{n,n}\mS_k\to \mS_k$. We have the map $\mS_k/\eta^{n+1}\to \mS_k/\eta^n$ induced by the commutative diagram
\[
\xymatrix{
\Sigma^{n+1,n+1}\mS_k\ar[r]^-{\eta^{n+1}}\ar[d]_{\eta}&\mS_k\ar@{=}[d]\\
\Sigma^{n,n}\mS_k\ar[r]_{\eta^n}&\mS_k
}
\]
Set $X/\eta^n:=\mS_k/\eta^n\wedge X$ and   let $X^\wedge_\eta$ be the $\eta$-completion of $X$, that is, the homotopy inverse limit of the tower 
\[
\cdots\to X/\eta^3\to X/\eta^2\to X/\eta.
\]

{For an integer $m$, we let $S\Z/m \in\SH(k)$ be the corresponding motivic Moore spectrum, that is, the cofiber of $\times m: \mS_k\to \mS_k$, and for $X\in \SH(k)$ we set $X/m:=S\Z/m\wedge X$. 
Let $X^\wedge_l$ be the homotopy inverse limit of the tower $\ldots\to X/l^n\to X/l^{n-1}\to\ldots$, and let $(X^*)^{\wedge}_{l}$ be the $l$-adic completion of the graded abelian group $X^*$. }

{
Let  $H\Z$ denote the motivic  Eilenberg-MacLane spectrum. The same construction as in \cite[Example 3.4]{DRO}, with $\Z_{tr}$ replaced by $\Z_{tr}/l$, shows that  $E:=H\Z/l$  has the structure of a motivic commutative ring spectrum. }
The main theorem of \S~\ref{subsec:completion} is the following. 
\begin{prop}\label{prop:compl of MGL}
We have the isomorphism $(\MGL^{*})^{\wedge}_l\cong  (\MGL^{\wedge}_{H\bbZ/l})^{*}$. 
\end{prop}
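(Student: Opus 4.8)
The plan is to compare the $H\bbZ/l$-nilpotent completion tower \eqref{eqn:AdamsTower} for $X=\MGL$ with the Moore-space tower defining $\MGL^\wedge_l$, and to extract the statement about homotopy (equivalently, bigraded cohomotopy) groups from a comparison of the two. First I would recall that, quite generally, for a connective-type spectrum the $H\bbZ/l$-nilpotent completion $X^\wedge_{H\bbZ/l}$ is the homotopy limit of a tower whose associated graded pieces are wedges of suspensions of $H\bbZ/l$; the induced spectral sequence is the mod-$l$ motivic Adams spectral sequence. The key input is that $\MGL$ has well-understood mod-$l$ motivic homology (it is free over the motivic homology of the point, by the cellularity of $\Gr(r,m)$ used already in the proof of Proposition~\ref{prop:ThomComp}), so the $E_2$-page is computable — but for this proposition I do not actually need the full computation, only a comparison of the two completions.

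The concrete steps I would carry out: (1) Observe that both $X^\wedge_l$ and $X^\wedge_{H\bbZ/l}$ are homotopy limits of towers under $X$, and that there is a natural map of towers from the Moore tower $\{X/(l^n)\}$ to the Adams tower $\{C_s\}$, inducing $X^\wedge_l\to X^\wedge_{H\bbZ/l}$; this is essentially the observation that $X/(l)$ is already $H\bbZ/l$-nilpotent, so the $l$-adic tower refines to the Adams tower. (2) Show that on the level of the $2*,*$-graded homotopy groups this map becomes the $l$-adic completion map on the source: for this I would use that $\MGL^{2*,*}(\Spec k)[1/p]\cong \Laz^*[1/p]$ is a polynomial ring over $\Z[1/p]$ (hence its $l$-adic completion is well-behaved, with no $\lim^1$-obstruction in the relevant range), together with Theorem~\ref{Thm19.3} to control the bidegrees $(p,q)$ with $p>2q$ and guarantee that the Adams spectral sequence in weights $(2n,n)$ has no room for contributions outside the expected line. (3) Invoke the convergence result Theorem~\ref{thm:intro_MASS} (Theorem~\ref{thm:app}) — $\MGL$ has a cell presentation of finite type and satisfies the finiteness condition \eqref{eqn:fin} — to identify $X^\wedge_{H\bbZ/l}$ with $X^\wedge_{\eta,l}$, and then note that after $l$-localization the $\eta$-completion is invisible on the slices seen in weight $(2n,n)$, so that $(\MGL^\wedge_{H\bbZ/l})^{2*,*}$ agrees with the $l$-adic completion of $\MGL^{2*,*}$. (4) Finally assemble these identifications, being careful that the relevant $\lim$ and $\lim^1$ terms vanish: since $\MGL^{2*,*}$ is a free $\Z$-module (polynomial ring) in each degree, $\lim^1$ of the system $\{\MGL^{2*,*}/l^n\}$ vanishes and the Milnor exact sequence collapses to give $(\MGL^\wedge_l)^{2*,*}\cong (\MGL^{2*,*})^\wedge_l$.

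The main obstacle I anticipate is step (2)–(3): controlling the motivic Adams spectral sequence for $\MGL$ precisely enough in the weight-$(2n,n)$ diagonal to see that it collapses to the $l$-adic completion, without accidentally picking up the $h'_r$-classes (coming from the $Q_r$-part of the Steenrod algebra, cf. Proposition~\ref{lem:ext_alg_MGL}) which do not contribute to $\MGL^{2*,*}$ but do appear in $\Ext$. The point will be that those classes live in filtration $s\ge1$ and, by the degree bookkeeping in Proposition~\ref{lem:ext_alg_MGL}(1) ($t>2u$ forces vanishing, $t=2u$ only at the origin), they are pushed off the relevant diagonal or killed by differentials; making this precise — and checking that the finiteness hypotheses of Theorem~\ref{thm:app} genuinely hold for $\MGL$ — is where the real work lies. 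The rest is formal manipulation of homotopy limits and Milnor sequences.
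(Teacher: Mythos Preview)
Your outline matches the paper's argument: invoke Theorem~\ref{thm:app} to identify $\MGL^\wedge_{H\bbZ/l}\simeq \MGL^\wedge_{\eta,l}$, then handle the $\eta$- and $l$-completions separately via Milnor sequences and the known structure of $\MGL^{2*,*}$. Step~(4) is exactly Lemma~\ref{lem:for MGL}: the paper uses the vanishing $\MGL^{2t+1,t}=0$ and the torsion-freeness of $\MGL^{2m,m}\cong\Laz^{-m}$ to kill the relevant $\varprojlim^1$-terms, just as you propose.

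However, the ``main obstacle'' you anticipate is a phantom. No Adams spectral sequence analysis whatsoever is needed for this proposition, and the $h'_r$-classes play no role. The paper disposes of the $\eta$-completion in one line (Lemma~\ref{lem:MGL eta}): since $\MGL$ is oriented, multiplication by $\eta$ is already zero on $\MGL$, hence $(\MGL^\wedge_\eta)^{*}=\MGL^{*}$ on the nose, not merely ``invisible on slices in weight $(2n,n)$''. Once you know this, $(\MGL^\wedge_{\eta,l})^{2*,*}=(\MGL^\wedge_l)^{2*,*}$ and you are reduced to the purely formal Milnor-sequence bookkeeping of step~(4). So your plan is correct, but strip out the spectral-sequence worry in steps~(2)--(3) and replace your vague claim about $\eta$-completion with the one-line observation that $\eta\cdot\MGL=0$.
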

By Theorem \ref{thm:intro_MASS}, $\MGL^{\wedge}_{{H\Z/l}}$ is isomorphic to the completion $\MGL^{\wedge}_{l, \eta}$ of $\MGL$ at $l$ and $\eta$. Proposition \ref{prop:compl of MGL} {is a consequence of  } the following lemmas.
\begin{lemma}\label{lem:MGL eta}
We have $(\MGL^{\wedge}_{ \eta})^*=\MGL^*$. In particular, the map $\MSL\to \MGL$ induces a map $(\MSL^{\wedge}_{ \eta})^*\to\MGL^*$.
\end{lemma}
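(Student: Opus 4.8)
The plan is to show that $\eta$ acts nilpotently (in fact trivially in the relevant range) on the homotopy of $\MGL$, so that $\eta$-completion does not change the geometric part $\MGL^{2*,*}$. First I would recall that $\MGL$ is an oriented ring spectrum, and that for any oriented theory the Hopf map $\eta\colon \G_m = S^{1,1}\to S^0$ acts as zero on $\MGL^{*,*}$. The cleanest way to see this: $\eta$ classifies, up to a unit, the class $[-1]_F(x)/x$ associated to the formal group law $F$ on $\MGL^{2*,*}$, but more directly, one uses that the first Chern class of the tautological bundle on $\PP^1$ together with the orientation gives a splitting forcing $\eta \cdot \alpha = 0$ for every $\alpha \in \MGL^{*,*}$; equivalently $\eta$ maps to $0$ under the unit map $\bbS_k \to \MGL$ because $\pi_{1,1}(\MGL)$ receives $\eta$ trivially — this is the standard fact that $\MGL$ is a theory with an inverse to the Hopf element in the sense that $\eta$ acts as $0$. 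Hence multiplication by $\eta$ on $\MGL \wedge X$ for any $X$ is null, and in particular on $\MGL$ itself.

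Next I would compute the $\eta$-completion. By definition $\MGL^\wedge_\eta = \holim_n \MGL/(\eta^n)$, where $\MGL/(\eta^n)$ is the cofiber of $\eta^n \colon \Sigma^{n,n}\MGL \to \MGL$. Since multiplication by $\eta$ is null on $\MGL$, each $\MGL/(\eta^n)$ splits as $\MGL \vee \Sigma^{n+1,n}\MGL$, and the tower maps $\MGL/(\eta^{n+1}) \to \MGL/(\eta^n)$ are, on homotopy groups in each fixed bidegree, eventually the identity on the $\MGL$ summand (the shifted summand $\Sigma^{n+1,n}\MGL$ contributes nothing in a fixed bidegree once $n$ is large). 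Therefore the $\lim$ and $\lim^1$ terms of the Milnor sequence give $(\MGL^\wedge_\eta)^{a,b} \cong \MGL^{a,b}$ in every bidegree; in particular $(\MGL^\wedge_\eta)^* = \bigoplus_n (\MGL^\wedge_\eta)^{2n,n} = \bigoplus_n \MGL^{2n,n} = \MGL^*$. The second assertion is then immediate: the projection $\MSL \to \MGL$ of Thom spectra is a map of ring spectra, hence commutes with $\eta$-completion, inducing $\MSL^\wedge_\eta \to \MGL^\wedge_\eta = \MGL$, and passing to the bigraded groups $(2*,*)$ gives the claimed map $(\MSL^\wedge_\eta)^* \to \MGL^*$.

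The main obstacle I anticipate is making the vanishing ``$\eta$ acts as $0$ on $\MGL$'' precise and correctly cited rather than merely invoked — one must be careful that this is a statement about $\MGL$ as a spectrum over a general perfect field $k$, not just over $\C$. The safe route is to deduce it from the orientation: an orientation of $\MGL$ provides, for the line bundle $\sO(-1)$ on $\PP^1 = \Th(\sO)$, a Thom class whose associated projective bundle formula computes $\MGL^{*,*}(\PP^1)$ as a free $\MGL^{*,*}$-module, and the cofiber sequence $\bbS_k^{1,1} \xrightarrow{\eta} \bbS_k \to \Sigma^\infty\PP^1_+ \to \bbS_k^{2,1}$ smashed with $\MGL$ then shows $\eta$ is null after smashing with $\MGL$. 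Once this input is in hand, the rest is the formal splitting-and-$\holim$ argument above, which is routine. One should also remark that the same reasoning shows $\MGL^\wedge_\eta \cong \MGL$ as spectra, not merely on $\pi_{2*,*}$, which is the form in which it will be used later together with the $l$-completion in Proposition~\ref{prop:compl of MGL}.
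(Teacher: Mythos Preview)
Your approach is the same as the paper's one-line proof: both rest on the fact that multiplication by $\eta$ is null on $\MGL$ (a consequence of the orientation), from which $\eta$-completeness follows formally; you have simply spelled out the details of the completion.

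One correction to your justification of the key input: the cofiber of $\eta\colon S^{1,1}\to S^0$ is $\Sigma^{-2,-1}\Sigma_T^\infty\PP^2$, not $\Sigma_T^\infty\PP^1_+$. Indeed $\Sigma_T^\infty\PP^1_+$ already splits as $S^0\vee S^{2,1}$, so it cannot be the cofiber of a nontrivial map $S^{1,1}\to S^0$. The argument you actually want is the projective bundle formula for $\MGL^{*,*}(\PP^2)$, which forces the attaching map $\eta$ of the top cell of $\PP^2$ to become null after smashing with $\MGL$. The remainder of your argument is fine, though it is cleaner to observe that the transition maps between the shifted summands $\Sigma^{n+2,n+1}\MGL\to\Sigma^{n+1,n}\MGL$ in the tower are themselves shifts of $\eta$, hence null, so both $\lim$ and $\lim^1$ over those summands vanish by Mittag--Leffler; this avoids having to argue that the individual groups $\pi_{a,b}(\Sigma^{n+1,n}\MGL)$ eventually vanish.
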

\begin{proof}
This follows from the fact that the multiplication by $\eta$ is a zero map on $\MGL$. To see this, we have the homotopy cofiber sequence 
\[
\A^2\setminus\{0\}\xrightarrow{\eta} \P^1\to \Th(O_{\P^1}(-1))\to \Sigma_{S^1}\A^2\setminus\{0\};
\]
giving the distinguished triangle of strongly dualizable objects in $\SH(k)$
\[
S^{3,2}\wedge \mS_k\xrightarrow{\eta} S^{2,1}\wedge \mS_k\to \Sigma^\infty_{\P^1}\Th(O_{\P^1}(-1))\to
S^{4,2}\wedge \mS_k
\]
The Thom isomorphism gives the isomorphism in $\SH(k)$
\[
\Cone(\times\eta:S^{-3,-1}\wedge \MGL\to S^{-4, -2}\wedge \MGL)=\sHom(\Th(O_{\P^1}(-1)), \MGL)\cong \Sigma^{-1}_{\P^1}\MGL\oplus \Sigma^{-2}_{\P^1}\MGL.
\]
This defines a splitting to the sequence
\[
S^{-3, -1}\wedge \MGL\xrightarrow{\times\eta}\Sigma^{-2}_{\P^1}\MGL\to \Cone(\times\eta)\to \Sigma^{-1}_{\P^1}\MGL
\]
and thus $\times\eta$ is a zero map.

The tower defining $\MGL^\wedge_\eta$ is thus a direct sum of the identity tower on $\MGL$ with the tower
\[
\cdots\to \Sigma^{n+2, n+1}\MGL\xrightarrow{\eta}\Sigma^{n+1, n}\MGL\to\cdots
\]
and since $\times\eta$ is a zero map, it follows from the Moore sequence
\[
0\to R^1\lim_\leftarrow\pi_{a+1,b}\Sigma^{n+1, n}\MGL\to \pi_{a,b}\holim \Sigma^{n+1, n}\MGL\to  \lim_\leftarrow\pi_{a,b}
\Sigma^{n+1, n}\MGL\to0
\]
that the homotopy inverse limit over this latter tower is zero, and thus the canonical map $\MGL\to \MGL^\wedge_\eta$ is an isomorphism.
\end{proof}

For an abelian group $A$,  we write $A^\wedge_l$  for $\varprojlim_n A/l^n$ and   write the $l^n$ torsion elements  in  $A$ as $_{l^n-\tors}(A)$.

\begin{lemma}\label{lem:for MGL}
We have
$(\MGL^{*})^{\wedge}_l\cong (\MGL^{\wedge}_l) ^{*}$. 
\end{lemma}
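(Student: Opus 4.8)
The plan is to identify both sides as limits of the same tower and to control the relevant $\lim^1$-term using the known structure of $\MGL^{2*,*}$. Recall that $(\MGL^{\wedge}_l)^{2*,*}$ is by definition the bidegree-$(2*,*)$ part of the homotopy groups of $\holim_n \MGL/(l^n)$, while $(\MGL^{2*,*})^{\wedge}_l$ is the $l$-adic completion of the abelian group $\MGL^{2*,*}=\Laz$ (after inverting $p$, this is a polynomial ring, hence in particular a flat, indeed free, $\Z[1/p]$-module; integrally it is still a countably generated free abelian group). First I would write down, for each $n$, the cofiber sequence $\MGL \xrightarrow{l^n} \MGL \to \MGL/(l^n)$, which on $(2*,*)$-valued homotopy (a cohomology theory, so it takes cofiber sequences to long exact sequences) gives
\[
0 \to \MGL^{2*,*}/l^n \to (\MGL/(l^n))^{2*,*} \to {}_{l^n}\MGL^{2*+1,*} \to 0,
\]
where ${}_{l^n}(-)$ denotes the $l^n$-torsion subgroup. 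The key input here is that $\MGL^{2*,*}\cong \Laz$ is torsion-free, and more generally that $\MGL^{2*+1,*}$ has no $l$-torsion; the cleanest way to see the latter is to note $\MGL^{2*+1,*}(k)$ vanishes in this range, or alternatively to invert $p$ from the start (which is harmless for the statement in the paper since it is used only after inverting $2p$) and use Hoyois's computation that $\MGL[1/p]^{2*,*}\cong \Laz[1/p]$ together with the fact that the odd part vanishes. In any case the upshot is $(\MGL/(l^n))^{2*,*}\cong \MGL^{2*,*}/l^n$ compatibly in $n$.

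Next I would pass to the limit over the tower $\cdots \to \MGL/(l^{n+1}) \to \MGL/(l^n) \to \cdots$. Applying the Milnor exact sequence for the homotopy inverse limit gives
\[
0 \to {\textstyle\lim^1_n}\, (\MGL/(l^n))^{2*-1,*} \to (\MGL^{\wedge}_l)^{2*,*} \to {\textstyle\lim_n}\, (\MGL/(l^n))^{2*,*} \to 0.
\]
The right-hand term is $\lim_n \MGL^{2*,*}/l^n = (\MGL^{2*,*})^{\wedge}_l$, which is exactly the group we want. So it remains to show the $\lim^1$ term vanishes. By the same exact sequence as above (shifted by one), $(\MGL/(l^n))^{2*-1,*}$ fits in $0 \to \MGL^{2*-1,*}/l^n \to (\MGL/(l^n))^{2*-1,*} \to {}_{l^n}\MGL^{2*,*}\to 0$; since $\MGL^{2*,*}=\Laz$ is torsion-free the right term is zero, and $\MGL^{2*-1,*}$ vanishes in this motivic bidegree (or again: invert $p$ and cite Hoyois), so $(\MGL/(l^n))^{2*-1,*}=0$ for all $n$ and the $\lim^1$ is trivially zero. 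Hence the middle map is an isomorphism, which is the assertion.

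I expect the main obstacle to be bookkeeping the motivic bidegrees carefully, rather than any deep point: one must make sure that ``$(2*,*)$'' and ``$(2*\pm 1,*)$'' parts behave as in the topological analogue, i.e. that the odd-weight-mismatched groups $\MGL^{2*+1,*}(k)$ genuinely vanish (equivalently, that $\MGL^{\wedge}_l$ has its homotopy concentrated in the expected slices), since unlike in topology there is a priori more room. This is where I would lean on Theorem~\ref{Thm19.3}, on Lemma~\ref{lem:MGL eta} (the $\eta$-completion does nothing to $\MGL$, so no extra contributions appear from that direction), and on the cited result $\phi_{\MGL}[1/p]:\Laz[1/p]\xrightarrow{\sim}\MGL[1/p]^{2*,*}$ of Hoyois, exactly as the surrounding text already does. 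Once the relevant odd groups are known to vanish $l$-locally, the argument is the standard ``limit of a Mittag-Leffler tower of surjections, with torsion-free terms'' computation and the $\lim^1$ obstruction evaporates.
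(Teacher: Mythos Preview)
Your overall strategy is exactly the one the paper uses: the same cofiber sequences, the same short exact sequence
\[
0 \to \MGL^{s,t}/l^n \to (\MGL/l^n)^{s,t} \to {}_{l^n\text{-}\tors}(\MGL^{s+1,t}) \to 0,
\]
the same Milnor $\varprojlim^1$ sequence, and the same use of $\MGL^{2t+1,t}(k)=0$ to identify $\varprojlim_n(\MGL^{2*,*}/l^n)$ with $\varprojlim_n((\MGL/l^n)^{2*,*})$.

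There is, however, one genuine gap. To kill the $\varprojlim^1$ term you assert that $\MGL^{2*-1,*}(k)=0$, so that $(\MGL/l^n)^{2*-1,*}=0$. This is not true over a general perfect field: for instance the slice (Hopkins--Morel--Hoyois) spectral sequence shows that $\MGL^{2m-1,m}(k)$ receives a contribution from $H^{1,1}(k;\bbZ)\otimes \Laz^{m-1}\cong k^\times\otimes \Laz^{m-1}$, which is typically nonzero. Hoyois's computation gives $\MGL[1/p]^{2*,*}\cong\Laz[1/p]$ and the vanishing of $\MGL^{2t+1,t}$, but it does \emph{not} give vanishing of $\MGL^{2t-1,t}$; the paper is careful never to claim this.

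The repair is immediate and is precisely what the paper does: from the torsion-freeness of $\MGL^{2m,m}=\Laz$ you already have $(\MGL/l^n)^{2m-1,m}\cong \MGL^{2m-1,m}/l^n$, and the tower $\{\MGL^{2m-1,m}/l^n\}_n$ has surjective transition maps, hence is Mittag--Leffler, so $\varprojlim_n^1(\MGL/l^n)^{2m-1,m}=0$. With this correction your argument coincides with the paper's.
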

\begin{proof}
We need to show  $\varprojlim_n( (\MGL/l^n)^{*})\cong (\MGL^{*})^\wedge_l$ and $(\holim_n\MGL/l^n)^{*} \cong \varprojlim_n ((\MGL/l^n)^{*})$. We use the partial computation of $\MGL^{*,*}$ given in Theorem~\ref{thm:HMH}.

Multiplication by $l$ induces the following commutative diagram
\begin{equation}\label{eqn:diag_MGLModP}
\xymatrix @R=1em{
\cdots \ar[r] & \MGL\ar[r]^{l^n} \ar[d]^{l} & \MGL \ar[r] \ar[d]^{\id} & \MGL/l^n \ar[r]^{[1]}  \ar[d]& \Sigma^{1, 0}\MGL \ar[r]\ar[d]^l &\cdots\\
\cdots \ar[r] & \MGL \ar[r]^{l^{n-1}} & \MGL\ar[r] & \MGL/l^{n-1} \ar[r]^{[1]} 
& \Sigma^{1, 0}\MGL\ar[r] & \cdots
}
\end{equation}
Applying the functor $[\mathbb{S}, \Sigma^{s, t}-]$ to \eqref{eqn:diag_MGLModP}, we get 
\begin{equation}\label{equ:ses of mod p}
\xymatrix@R=1em{
0 \ar[r] & \MGL^{s, t}/l^n \ar[r] \ar[d]& (\MGL/l^n)^{s, t} \ar[r] \ar[d]& _{l^n-\tors}(\MGL^{s+1, t})\ar[r] \ar[d]^{l}& 0\\ 
0 \ar[r] & \MGL^{s, t}/l^{n-1} \ar[r] & (\MGL/l^{n-1})^{s, t} \ar[r]& _{l^{n-1}-\tors}(\MGL^{s+1, t})\ar[r] & 0\\ 
}
\end{equation}
Taking $\varprojlim_n$, we obtain the exact sequence
\begin{equation}\label{eqn:MGL_LES}
0\rightarrow \varprojlim_n( \MGL^{s, t}/l^n)\rightarrow 
\varprojlim_n ((\MGL/l^n)^{s, t}) \rightarrow  \varprojlim_n ({}_{l^n-\tors}(\MGL^{s+1, t}))\rightarrow \varprojlim {}^{1}(\MGL^{s+1, t}/l^n)
\end{equation}
Using the fact that $\MGL^{2t+1, t}=0$, we conclude the isomorphism 
$\varprojlim_n (\MGL^{*}/l^n) \cong \varprojlim_n ((\MGL/l^n)^{*})$. 

Taking $(s, t)=(2m-1, m)$ in \eqref{equ:ses of mod p}, the third term, the $l^n$-torsion in the Lazard ring $\MGL^{2m, m}=\Laz^m$, is zero. 
Therefore, the system $\{(\MGL/ l^n)^{2m-1, m} \}$ is isomorphic to $\{ \MGL^{2m-1, m}/l^n \}$, which is a surjective system, and in particular has the Mittag-Leffler property. Hence,   $ \varprojlim_n^1(\MGL/l^n)^{2m-1, m}=0$. Using the following short exact sequence
\begin{equation}\label{eq:MGL p-compl}
0\to \varprojlim_n{}^1((\MGL/l^n)^{s-1, t}) \to (\holim_n\MGL/l^n)^{s, t} \to \varprojlim_n ((\MGL/l^n)^{s, t})
\to 0,
\end{equation}
we conclude that $(\holim_n\MGL/l^n)^{*} \cong \varprojlim_n ((\MGL/l^n)^{*})$. This concludes the proof.
\end{proof}

\subsection{The motivic Adams spectral sequence}
\label{subsec:MASS}

In this section, we discuss the $E_2$-term and convergence properties of the spectral sequence associated to the Adams tower \eqref{eqn:AdamsTower}. We fix a prime $l$ and take $E=H\Z/l$. The Adams spectral sequence for a motivic spectrum $X$ is the spectral sequence of the tower \eqref{eqn:AdamsTower}. We use the following indexing convention:  
\begin{equation}\label{eqn:AdamsSS}
E_1^{s,t, u}:=W_s(X)^{t, u}\Rightarrow (X^\wedge_{H\Z/l})^{t, u}
\end{equation}
with $d_r^{s,t,u}:E_r^{s,t,u}\to E_r^{s+r, t+1, u}$, that is, $s$ is the filtration degree and $(t,u)$ is the cohomological bi-degree.  

We recall some facts concerning the motivic Adams spectral sequence from \cite{DI}. It is already mentioned in \cite[pg. 3845]{HKO} that their results enable the properties of the mod $l$ Adams spectral established in \cite{DI} (for $l=2$ and over a characteristic zero field) and \cite{HKO} (for arbitrary $l$ and over a characteristic zero field) to extend over an arbitrary field, but we thought it worthwhile to collect these results in a useful form below, without any claim of originality.

\begin{definition}\label{def:ASS}
1.   For $Y\in \SH(k)$, we say that $Y$ is a {\em motivically finite type wedge of  copies of $H\bbZ/l$} if for some $a,b\in \Z$, $\Sigma^{a,b}Y\cong \oplus_{\alpha\in S}\Sigma^{p_\alpha,q_\alpha}H\Z/l$ where the bi-degrees $(p_\alpha, q_\alpha)$ satisfy the following conditions: \\[3pt]
i.   $p_\alpha\ge 2q_\alpha\ge 0$ for all $\alpha\in S$\\
ii. For each $q\in \Z$, there are only finitely many $\alpha$ with $q_\alpha\le q $
\\[3pt]
2.  Let  $\angl{\bbS_k}_{H\Z/l}$ be the full localizing subcategory of $H\Z/l$-{\em cellular spectra}, that is,  $\angl{\bbS_k}_{H\Z/l}$ is the smallest full  subcategory containing $\bbS_k$ and closed under arbitrary coproducts (wedges) and the operations $H\Z/l\wedge-$, taking homotopy cofiber and suspension $\Sigma^{a,b}$, $a, b\in \Z$.
\end{definition}

\begin{remarks}\label{rem:MFT} 1. Our definition of ``a motivically finite type wedge of  copies of $H\bbZ/l$'' is a version of the hypothesis of \cite[Lemma 5.2]{HKO}, modified by making this hypothesis stable under suspension $\Sigma^{a,b}$, and differs from the definition of this term given in \cite{DI}. The conditions given in \cite{DI} will also yield the results described in the remainder of this remark, however, rather than relying on the ``elementary'' vanishing properties of motivic cohomology described in Theorem~\ref{Thm19.3}, these require in addition the vanishing of $H^{a,b}$ for $a<0, b>0$, which is the case, but requires the Block-Kato conjectures for their proof. \\
2. It follows from \cite[Lemma 5.2]{HKO} that for $Y$ a motivically finite type wedge of  copies of $H\bbZ/l$, the natural map
\[
H_{-*,-*}(Y)\to \Hom_{H^{**}(H\Z/l)}(H^{**}(Y), H^{**})
\]
is an isomorphism. Similarly, using the same proof as for Corollary~\ref{cor:MGLMSLCohHom}, we see that the canonical map
\[
H^{**}(Y)^{\otimes_{H^{**}} m}\to H^{**}(Y^{\wedge m})
\]
is an isomorphism.  \\
3. We have $A^{**}=H^{**}(H\Z/l)$ and letting $A_{-*,-*}:=\Hom_{H^{**}}(A^{**}, H^{**})$ be the dual Steenrod algebra, we have $A_{**}=H_{**}(H\Z/l)$ (see \cite[Theorem 1.1, Corollary 3.3, Proposition 5.3]{HKO}). Moreover, $H\Z/l\wedge H\Z/l$ is a motivically finite type wedge of copies of $H\bbZ/l$ (with $a=b=0$) \cite[Corollary 3.4]{HKO}. For $Y=\oplus_\alpha \Sigma^{p_\alpha, q_\alpha}H\Z/l$  a motivically finite type wedge of  copies of $H\bbZ/l$,  it follows from \cite[Lemma 5.2]{HKO} that  $H^{**}Y$ is the free bi-graded $A{\bideg}$-module $\oplus_\alpha \Sigma^{-p_\alpha, -q_\alpha}A{\bideg}$.\\
4. Take $X\in \angl{\bbS_k}_{H\Z/l}$. Then $X_s:=\overline{H\Z/l}^{\wedge s}\wedge X$ and $W_s(X):=H\Z/l\wedge X_s$ are both in $\angl{\bbS_k}_{H\Z/l}$. For $s=0$, this is clear, and in general this follows from the homotopy cofiber sequences $X_s\to X_{s-1}\to W_{s-1}$.\\
5. For $X\in  \angl{\bbS_k}_{H\Z/l}$, the canonical maps
\begin{align*}
W_s(X)_{*,*}&\to H_{**}(\overline{H\Z/l})^{\otimes_{H_{**}}s}\otimes_{H_{**}}H_{**}(X)\\
H_{**}W_s(X)&\to H_{**}(H\Z/l)\otimes_{H_{**}}H_{**}(\overline{H\Z/l})^{\otimes_{H_{**}}s}\otimes_{H_{**}}H_{**}(X)
\end{align*}
are isomorphisms. This follows from \cite[Lemma 5.4, case (1)]{HKO} with $R_{**}=H_{**}$ and $E=H\Z/l$ or $H\Z/l\wedge H\Z/l$. 
\end{remarks}

\begin{prop} [\cite{DI}, Remark~6.11, Proposition~6.14]\label{DI1} 
\label{DI2}
Assume $X$ is a $H\bbZ/l$-cellular spectrum  such that each $W_s(X)$ is a motivically finite type wedge of copies of  $H\bbZ/l$. The $E_2$-page of the mod $l$ Adams spectral sequence for $X$ is given by  
\begin{equation}\label{eqn:E2}
E_2^{s,t,u}=\Ext_{A\bideg}^{s,(t-s,u)}(H^{*, *}(X,\Z/l), H\bideg).
\end{equation}

Fix $(t,u)$. If in addition $\varprojlim_r^1 E^{s,t,u}_r(X)=0$ for each $s$, then the spectral sequence converges completely to $(X^{\wedge}_{H\Z/l})^{t,u}$. That is, let $F_s(X^{\wedge}_{H\Z/l})^{t,u}\subset (X^{\wedge}_{H\Z/l})^{t,u}$ be the kernel of $(X^{\wedge}_{H\Z/l})^{t,u}\to C_{s-1}(X)^{t,u}$. Then $(X^{\wedge}_{H\Z/l})^{t,u}\to \varprojlim_sC_s(X)^{t,u}$ is an isomorphism,   $\cap_sF_sX^{\wedge\ t,u}_{H\Z/l}=\{0\}$ and the natural map \[F_s(X^{\wedge}_{H\Z/l})^{t,u}/F_{s+1}(X^{\wedge}_{H\Z/l})^{t,u}\to E_\infty^{s,t,u}(X)\] is an isomorphism for all $s\ge0$.
\end{prop}

\begin{proof}[Proof, following \cite{DI}] 
  The $E_1$-complex is 
\[
E_1^{\bullet, *,u}:= W_0(X)^{*,u}\to \Sigma^{1,0}W_1(X)^{*,u}\to \ldots\to \Sigma^{s,0}W_s(X)^{*, u}\to \ldots
\]
with $E_1^{s,t,u}=W_s(X)^{t,u}=\Sigma^{s,0}W_s(X)^{t-s, u}$. By  Remark~\ref{rem:MFT}(5)   the natural maps
\begin{align*}
W_s(X)_{*,*}&\to H_{**}(\overline{H\Z/l})^{\otimes_{H_{**}}s}\otimes_{H_{**}}H_{**}(X)\\
H_{**}W_s(X)&\to H_{**}(H\Z/l)\otimes_{H_{**}}H_{**}(\overline{H\Z/l})^{\otimes_{H_{**}}s}\otimes_{H_{**}}H_{**}(X)
\end{align*}
are isomorphisms. Moreover, the multiplication map $H\Z/l\wedge H\Z/l\to H\Z/l$ splits the cofiber sequence $H\Z/l\wedge X_{s+1}\to H\Z/l\wedge X_s\to   H\Z/l\wedge W_s(X)$, so the complex
\begin{equation}\label{eqn:Seq1}
0\to H_{**}(X,\Z/l)\to H_{**}W_0(X)\to H_{*, *}\Sigma^{1,0}W_1(X)\to \ldots\to H_{*, *}\Sigma^{s,0}W_s(X)\to \ldots
\end{equation}
is a $H_{**}(H\Z/l)$-comodule resolution of $H_{**}(X,\Z/l)$ by extended  $H_{**}(H\Z/l)$-comodules, and is split exact as a sequence of $H_{**}$-modules; as $W_s(X)$ is a sum of copies of  suspensions of $H\bbZ/l$, these  extended  $H_{**}(H\Z/l)$-comodules are extended from free $H_{**}$-modules. This gives us the isomorphism
\[
\Ext^{s, (s-t, -u)}_{H_{**}(H\Z/l)}(H_{**}, H_{**}(X))\cong H^s(\Hom^{(s-t, -u)}_{H_{**}(H\Z/l)}(H_{**}, 
H_{*, *}\Sigma^{\bullet,0}W_\bullet)).
\]
where $\Ext_{H_{**}(H\Z/l)}(-,-)$ and $\Hom_{H_{**}(H\Z/l)}(-,-)$ are in the category of bi-graded $H_{**}(H\Z/l)$-comodules. 

Our description of $W_s(X)_{*,*}$ and $H_{**}W_s(X)$ given above imply that the natural map
\[
W_s(X)_{a,b}\to \Hom^{a,b}_{H_{**}H\Z/l}(H_{**}, 
H_{*, *}W_s(X))
\]
is an isomorphism. Comparing with the $E_1$-complex gives the isomorphism
\[
E_2^{s,t,u}=\Ext^{s, (s-t, -u)}_{H_{**}H\Z/l}(H_{**}, H_{**}(X,\Z/l)).
\]

Recall that $W_s=H\Z/l\wedge X_s$, and $X_s\to W_s$ is induced by the unit map $\mS_k\to H\Z/l$. Thus if we have  a map $x:X_s\to \Sigma^{a,b}H\Z/l$, we have the composition
\[
W_s=H\Z/l\wedge X_s\xrightarrow{\id\wedge x} \Sigma^{a,b}H\Z/l\wedge H\Z/l\xrightarrow{\mu}
\Sigma^{a,b}H\Z/l
\]
lifting $x$. The maps $H^{**}(W_s)\to H^{**}(X_s)$ are therefore surjective, which implies that the complex 
\begin{equation}\label{eqn:Seq2}
0\leftarrow H^{**}(X,\Z/l)\leftarrow H^{**}(W_0)\leftarrow\ldots\leftarrow H^{**}(\Sigma^{s,0}W_s)\leftarrow\ldots
\end{equation}
is a $H^{**}(H\Z/l)$-module resolution of $H^{**}(X,\Z/l)$ by free $H^{**}(H\Z/l)$-modules (see Remark~\ref{rem:MFT}). 

By Remark~\ref{rem:MFT}, the map $H_{-*,-*}(\Sigma^{s,0}W_s)\to \Hom_{H\bideg}(H^{**}(\Sigma^{s,0}W_s),H\bideg)$
is an isomorphism for each $s$. Thus,  applying $\Hom_{H\bideg}(-,H\bideg)$ to the subcomplex $H^{**}(\Sigma^{\bullet,0}W_\bullet)$ in \eqref{eqn:Seq2} yields the subcomplex $H_{**}(\Sigma^{\bullet,0}W_\bullet)$ in \eqref{eqn:Seq1}.  Since $\Hom^{a,b}_{H_{**}H\Z/l}(H_{**}, H_{**}H\Z/l)
=\Hom^{-a,-b}_{H^{**}H\Z/l}(H^{**}H\Z/l, H^{**})$,  this gives the isomorphism
\[
E_2^{s,t,u}\cong \Ext^{s, (s-t, -u)}_{H_{**}(H\Z/l)}(H_{**}, H_{**}(X,\Z/l))\cong 
\Ext^{s,(t-s,u)}_{H^{**}(H\Z/l)}(H^{**}(X,\Z/l), H^{**})=\Ext^{s,(t-s,u)}_{A\bideg}(H^{**}(X,\Z/l), H^{**}).
\]

The statement about convergence follows from the general convergence properties of the holim tower of a cosimplicial space; see \cite[\S 6]{Bous} and \cite[Chap. IX, \S 5.3, Lemma 5.4]{BK} for details.
\end{proof}

\subsection{Multiplicative structure} We assume as in Proposition~\ref{DI1}  that $X$ is an $H\bbZ/l$-cellular spectrum  such that each $W_s(X)$ is a motivically finite type wedge of copies of  $H\bbZ/l$. In addition, we assume that $X$ is a motivic commutative ring spectrum, with multiplication $\mu_X:X\wedge X\to X$, and we assume that the canonical map $H^{**}(X)\otimes_{H^{**}}H^{**}(X)\to H^{**}(X\wedge X)$ is an isomorphism. Finally, we assume that $H^{**}(X)$ is flat over $H\bideg$. From \S\ref{subsec:Ext}, this makes the tri-graded Ext-groups $\Ext_{A\bideg}^{*,(*,*)}(H^{**}(X), H^{**})$ into a tri-graded algebra. 

\begin{prop}\label{prop:MultASS} With the assumptions as above, the Adams spectral sequence \eqref{eqn:AdamsSS} has a multiplicative structure, compatible with the product on 
$(X^\wedge_{H\Z/l})^{*, *}$ induced by the product $\mu_X$. Moreover, the product on the $E_2$-terms
\[
E_2^{s,(t,u)}=\Ext^{s, (t-s, u)}_{A\bideg}(H^{**}(X), H^{**})
\]
induced by the spectral sequences agrees with the natural algebra structure on $\Ext_{A\bideg}^{*,(*,*)}(H^{**}(X), H^{**})$,  as defined in \S\ref{subsec:Ext}.
\end{prop}

\begin{proof} We follow the usual construction in the topological case. We first work in a somewhat more general setting, letting $X$  be a $H\bbZ/l$-cellular spectrum  such that each $W_s(X)$ is a motivically finite type wedge of copies of  $H\bbZ/l$, and $X'\in \SH(k)$ a second spectrum satisfying the same hypotheses. It follows that $X\wedge X'$ also is an $H\bbZ/l$-cellular spectrum  and  each $W_s(X\wedge X')$ is a motivically finite type wedge of copies of  $H\bbZ/l$. We will construct a pairing of spectral sequences
\[
E_r^{*,(*,*)}(X)\otimes E_r^{*,(*,*)}(X')\to E_r^{*,(*,*)}(X\wedge X')
\]
and show that the induced pairing on the $E_2$-terms agrees with the natural pairing on the $\Ext$-groups.

For this, we use the towers over $X$, $X'$ \eqref{eqn:AdamsTower0} to define the respective Adams spectral sequences for $X$ and $X'$. We replace $X_s$ with
$X_s^h:=\hocolim_{t\ge s}X_t$.
This gives us a new tower
\begin{equation}\label{eqn:AdamsTowerH}
 \xymatrix{
 \ldots\ar[r]&X^h_{s+1}\ar[d]\ar[r]&X^h_s\ar[r]\ar[d]&\ldots\ar[r]&X^h_1\ar[r]\ar[d]&X^h_0\ar[d]\ar[r]^\sim&X\\
 &W^h_{s+1}(X)&W^h_s(X)&&W^h_1(X)&W^h_0(X)
 }
 \end{equation}
which is weakly equivalent to the tower \eqref{eqn:AdamsTower0}, with each map $X^h_{s+1}\to X^h_s$ a cofibration; we do the same for $X'$. Taking the smash product of these two towers gives us the two-dimensional diagram with terms $X^h_s\wedge X^{\prime h}_t$. Defining
\[
(X\wedge X')^h_n:=\hocolim_{s+t\ge n}X^h_s\wedge X^{\prime h}_t
\]
gives us the tower of cofibrations
\begin{equation}\label{eqn:AdamsTowerProduct}
\ldots\to(X\wedge X')^h_{n+1}\to (X\wedge X')^h_n\to\ldots\to (X\wedge X')^h_0\sim X\wedge X'
\end{equation}
One computes that the cofiber of $(X\wedge X')^h_{n+1}\to (X\wedge X')^h_n$ is $\oplus_{s+t=n}W^h_s(X)\wedge W^h_t(X')$. 

Thus $\oplus_{s+t=n}W^h_s(X)\wedge W^h_t(X')$ is a motivically finite type wedge of copies of  $H\bbZ/l$, as is $(X\wedge X')^h_n$. The maps $X_s\to W_s(X)$, $X'_t\to W_t(X')$ induce surjections on $H^{**}$, hence the same holds for the map $(X\wedge X')^h_n\to \oplus_{s+t=n}W^h_s(X)\wedge W^h_t(X')$. By the arguments of \cite[6.14 Proposition]{DI}, the tower \eqref{eqn:AdamsTowerProduct} is weakly equivalent to the Adams tower \eqref{eqn:AdamsTower0} for $X\wedge X'$. 

The natural map of $X^h_s\wedge X^{\prime h}_t$ to $(X\wedge X')^h_n$ for $s+t=n$ and the inclusion $W_s(X)\wedge W_t(X')\to \cofib[(X\wedge X')^h_{n+1}\to (X\wedge X')^h_n]$  induces the pairing of rigid towers in the sense of \cite[Section 6, Appendix C]{Dugger}. By \cite[Theorem 6.1]{Dugger}, this gives us the desired pairing of spectral sequences. Note that the results of \cite{Dugger} are formulated and proven in the setting of usual spectra, but there is no essential change in checking that analogous results hold in the motivic setting.

It remains to check that the above pairing on the $E_2$-terms is the one given by the pairing of Ext-groups. The connection of the $E_2$-terms with the Ext-groups for the Adams tower \eqref{eqn:AdamsTower0}, as explained in the proof of Proposition~\ref{DI1},  follows from two facts:\\[3pt]
1. The complex
\[
0\leftarrow H^{**}(X)\leftarrow H^{**}(W_0(X))\leftarrow H^{**}(\Sigma^{1,0}W_1(X))\leftarrow \ldots\
\leftarrow H^{**}(\Sigma^sW_{s}(X))\leftarrow  \ldots
\]
induced from \eqref{eqn:AdamsTower0} is exact and $H^{**}(\Sigma^{s,0}W_s(X))$ is a free $A^{**}$-module for all $s$.\\[3pt]
2. The canonical map $\vartheta:\pi_{**}(W_s(X))\to \Hom_{A^{**}}(H^{**}(W_s(X)), H^{**})$ is an isomorphism for each $s$. 
\\[3pt]
In other words, the $E_1$-complex for the Adams tower \eqref{eqn:AdamsTower0}  is isomorphic to the complex
\[
0\to \Hom_{A^{**}}(H^{**}(W_0(X)), H^{**})\to \ldots\to \Hom_{A^{**}}(H^{**}(\Sigma^s W_s(X)), H^{**})\to\ldots
\]
whose cohomology in degree $s$ computes $\Ext^s_{A\bideg}(H^{**}(X), H^{**})$. The analogs of (1) and (2) carry over for the tower \eqref{eqn:AdamsTowerProduct}, due to the fact that $\oplus_{s+t=n}W^h_s(X)\wedge W^h_t(X')$ is a motivically finite type wedge of copies of  $H\bbZ/l$ and the map $(X\wedge X')^h_n\to \oplus_{s+t=n}W^h_s(X)\wedge W^h_t(X')$ induces a surjection on $H_{**}$. Thus the $E_1$-complex for the tower \eqref{eqn:AdamsTowerProduct} computes $\Ext_{A^{**}}(H^{**}(X\wedge X'), H^{**})$.

Moreover,  we have the commutative diagram
\[
\xymatrix{
\pi_{**}(W_s(X))\otimes \pi_{**}(W_t(X')\ar[r]\ar[d]_{\vartheta\otimes\vartheta}&\pi_{**}(W_s(X)\wedge W_t(X'))\ar[d]^\vartheta\\
\Hom_{A^{**}}(H^{**}(W_s(X)), H^{**})\otimes \Hom_{A^{**}}(H^{**}(W_t(X')), H^{**})\ar[r]&
\Hom_{A^{**}}(H^{**}(W_s(X)\wedge W_t(X')), H^{**})
}
\]
where the top row is the usual multiplication and the bottom row is the product 
\[
\Hom_{A^{**}}(H^{**}(W_s(X)), H^{**})\otimes \Hom_{A^{**}}(H^{**}(W_t(X')), H^{**})\to
\Hom_{A^{**}}(H^{**}(W_s(X))\otimes_{H^{**}}H^{**}( W_t(X')), H^{**})
\]
composed with the map induced by the isomorphism $H^{**}(W_s(X))\otimes_{H^{**}}(W_t(X')) \cong H^{**}(W_s(X)\wedge W_t(X'))$. Thus, our product structure induces a map of $E_1$-complexes
\[
E_1(X)\otimes E_1(X')\to E_1(X\wedge X')
\]
which after taking cohomology induces the the natural pairing on the $\Ext$-groups
\[
\Ext_{A^{**}}(H^{**}(X), H^{**})\otimes \Ext_{A^{**}}(H^{**}(X'), H^{**})\to 
\Ext_{A^{**}}(H^{**}(X\wedge X'), H^{**}).
\]
as defined in \S\ref{subsec:Ext}.

To conclude the proof, we use that fact that the Adams tower \eqref{eqn:AdamsTower0} is natural in $X$, so if we have a multiplication $\mu_X:X\wedge X\to X$, this together with the pairing we have just constructed in case $X'=X$ gives the multiplicative structure 
\[
E_r^{*,(*,*)}(X)\otimes E_r^{*,(*,*)}(X)\to E_r^{*,(*,*)}(X).
\]
The pairing on the $E_2$-terms, viewed as Ext-groups, is given by
\[
\Ext_{A^{**}}(H^{**}(X), H^{**})\otimes \Ext_{A^{**}}(H^{**}(X), H^{**})\to
\Ext_{A^{**}}(H^{**}(X\wedge X), H^{**})\xrightarrow{(\mu^*)^*}\Ext_{A^{**}}(H^{**}(X), H^{**})
\]
which is just the algebra structure on $\Ext_{A^{**}}(H^{**}(X), H^{**})$ as defined in \S\ref{subsec:Ext}.
\end{proof}

\subsection{The motivic Adams spectral sequence for $\MSL$}
\label{subsec:MASSMSL}

We first analyze the case $X=\mS_k$. Set $E=H\Z/l$. 

For each $s\ge1$ we have the distinguished triangle \eqref{eqn:triang_Es} in $\SH(k)$ 
\[
\overline E^{\wedge s} \to \overline E^{\wedge s-1} \to 
E\wedge \overline E^{\wedge s-1}=W_{s-1}.
\]
Smashing with $E$ gives us the distinguished triangle in $\Mod E$
\[
E\wedge \overline E^{\wedge s} \to E\wedge \overline E^{\wedge s-1} \to 
E\wedge E\wedge \overline E^{\wedge s-1};
\]
the multiplication map $E\wedge E\to E$ splits the map $E\wedge \overline E^{\wedge s-1} \to 
E\wedge E\wedge \overline E^{\wedge s-1}$ and shows that $W_s$ is a summand of $\Sigma^{-1,0}E\wedge W_{s-1}$. Inductively, $W_s$ is a summand of $\Sigma^{-s,0}E^{\wedge s+1}$. 

\begin{lemma}\label{lem:Ws} $W_s\cong \oplus_{p,q} \Sigma^{p, q}(H\Z/l)^{r_{p,q}}$, where the sum is over $(p,q)$ with  $p+s\ge 2q\ge0$ and only finitely many $r_{p,q}$ are non-zero for each $q$.
\end{lemma}

\begin{proof}  By Remark~\ref{rem:MFT}(3)
$H\Z/l\wedge H\Z/l= \oplus_{p,q} \Sigma^{p, q}(H\Z/l)^{n^{(2)}_{p,q}}$, where the sum is over $(p,q)$ with  $p\ge 2q\ge0$ and for each $q$ only finitely many $n^{(2)}_{p,q}$ are non-zero. By induction, $(H\Z/l)^{\wedge m}$ has the same description for all $m\ge2$
\[
(H\Z/l)^{\wedge m}=\oplus_{p,q} \Sigma^{p, q}(H\Z/l)^{n^{(m)}_{p,q}},\ p\ge 2q\ge0
\]
with only finitely many $n^{(m)}_{p,q}$ non-zero for each $q$.

Suppose $M$ is a summand of $(H\Z/l)^{\wedge m}$. Then there is an idempotent endomorphism $a:(H\Z/l)^{\wedge m}\to (H\Z/l)^{\wedge m}$ with $M\cong \im a$. As 
\[
\Hom_{H\Z/l\lMod}(\Sigma^{p, q}H\Z/l, \Sigma^{p', q'}H\Z/l)=
\begin{cases}
0&\text{ for }q>q'\\
0&\text{ for }q=q', p\neq p'\\
\Z/l\cdot \id&\text{ for }q=q', p=p'
\end{cases}
\]
and as each $\Sigma^{p, q}H\Z/l$ is compact, 
$a$ may be represented as a block lower triangular matrix 
\[
a=\begin{pmatrix} 
a_{0,0}&0&\cdots\\
*&a_{1,0}&0&\cdots\\
\vdots&&\ddots\\
*&\cdots&*&a_{p,q}&0\cdots\\
\vdots&&\vdots
\end{pmatrix},\ a_{p,q}\in \End_{\F_l}(\F_l^{n^{(m)}_{p,q}})
\]
with each column finite. Then each  $a_{p,q}$ is an idempotent in $\End_{\F_l}(\F_l^{n^{(m)}_{p,q}})$ and $a$ thus gives an isomorphism of $\im a$ with $\oplus_{p,q} \Sigma^{p, q}(H\Z/l)^{r_{p,q}}$, where $r_{p,q}=\text{rank}\,a_{p,q}\le n^{(m)}_{p,q}$ for all $p,q$. As $W_s$ is a summand of $\Sigma^{-s,0}E^{\wedge s+1}$, the result follows.
\end{proof}

\begin{lemma}\label{lem:Ws2} $H\Z/l\wedge \MGL\cong \oplus_{n\ge0}\Sigma^{2n, n}(H\Z/l)^{p_n}$
and
$H\Z/l\wedge \MSL\cong \oplus_{n\ge0}\Sigma^{2n, n}(H\Z/l)^{r_n}$ for suitable integers $p_n\ge r_n\ge0$.
\end{lemma}

\begin{proof} This follows from Theorem~\ref{thm:MGLMSLCohHom}, taking $\sE=H\Z/l$.
\end{proof} 

\begin{prop} \label{prop:MFTMSL} In the Adams tower \eqref{eqn:AdamsTower0} for $E=H\Z/l$ and $X=\MGL$ or $\MSL$, the cofiber $W_s(X)$  of $X_{s+1}\to X_s$ is of the form
\[
W_s(X)\cong \oplus_{p,q}\Sigma^{p,q}(H\Z/l)^{m_{p,q}}
\]
with the sum over $(p,q)$ with $p+s\ge 2q\ge0$ and only finitely many $m_{p,q}$ non-zero for each $q$. In particular, $W_s(\MGL)$ and $W_s(\MSL)$ are motivically finite type wedges of copies of $H\Z/l$.
\end{prop}

\begin{proof} This follows from Lemma~\ref{lem:Ws} and Lemma~\ref{lem:Ws2}, noting that  $W_s(X)=W_s\wedge X=W_s\wedge_{H\Z}H\Z\wedge X$.
\end{proof}

Now we apply Proposition \ref{DI1} to the case when $X$ is $\MGL$ or $\MSL$.

\begin{theorem}\label{thm:MGLMSLASS} For $X=\MGL$ or $\MSL$ and $l$ prime to the characteristic of $k$,  the mod $l$ Adams spectral sequence for $X$ is of the form
\[
E_2^{s,t,u}=\Ext_{A{\bideg}}^{s,(t-s,u)}(H^{**}(X), H^{**})\Rightarrow (X_{H\Z/l}^\wedge)^{t,u}
\]
with $d_r:E_r^{s,t,u}\to E_r^{s+r,t+1,u}$.
\end{theorem}

\begin{proof} We have already seen that the $W_s(X)$ are all motivically finite wedges of copies of $H\Z/l$ (Proposition~\ref{prop:MFTMSL}). We now show that $\MGL$ and $\MSL$ are $H\Z/l$-cellular spectra (in fact $\mS_k$-cellular). 

 Indeed, from the definition 
\[
\MSL=\colim_N\Sigma_{\P^1}^{-N}\MSL_N=\colim_N\Sigma_{\P^1}^{-N}\Th(\sE_N)=\colim_{N, m}\Sigma_{\P^1}^{-N}\P(\tilde\sE_{N, N+m}\oplus \sO_{\BSL_{N, N+m}})/\P(\tilde\sE_{N,N+m}).
\]
Here $\BSL_{N,N+m}$ is the $\G_m$-bundle over $\BGL_{N, N+m}=\Gr(N, N+m)$ corresponding to the line bundle $\det \sE_{N,N+m}$ and $\tilde\sE_{N, N+m}\to \BSL_{N,N+m}$ is the pull-back  of $\sE_{N, N+m}\to \Gr(N, N+m)$. The cellular structure on  $\Gr(N, N+m)$ given by the Schubert cell decomposition thus gives a cellular decomposition of $\BSL_{N,N+m}$ in terms of products $\G_m\times\A^r$ and this in turn induces a  cellular structure on $\Th(E_{N,N+m})$. This shows that the suspension spectrum $\Sigma_{\P^1}^\infty \Th(E_{N,N+m})$ is  $\mS_k$-cellular. As this subcategory by definition is closed under homotopy colimits, it follows that $\MSL$ is $\mS_k$-cellular and hence is $H\bb\Z/l$-cellular. The same argument shows that $\MGL$ is $H\bb\Z/l$-cellular. 
\end{proof}

\subsection{Vanishing of differentials}

By \cite[Theorem 5.11]{Hoy} and \cite[Corollary 4.9]{Spitz} , we know that $\MGL^{*}(k)[\frac{1}{p}]$ is isomorphic to the localized Lazard ring $\Laz[\frac{1}{p}]$, which is a  polynomial ring over $\bbZ[1/p]$ in generators $x_n$, $n=1, 2,\ldots$, with $\deg(x_n)=(-2n-n)$. 

Set $E_r^{t,u}(X):=\bigoplus_{s\geq 0} E_r^{s,t,u}(X)$.  
The differential $d_r$ of the motivic Adams spectral sequence sends $E_r^{t,u}(X)$ to $E_r^{t+1,u}(X)$. The restriction of $d_r$ to $E_r^{t,u}(X)$ is denoted by $d_r^{X,t,u}$, or simply by $d_r^{t,u}$ if $X$ is understood from the context.

\begin{prop}\label{diff_MGLMSL} 1. For $r\geq2$, the differentials $d_r^{\MGL,2u-1,u}$ and $d_r^{\MGL,2u,u}$ vanish; in consequence, $E_2^{2u,u}(\MGL)\cong E_\infty^{ 2u,u}(\MGL)$, and for each $u\in \Z$, the mod $l$ Adams spectral sequence for $\MGL$ converges completely to $(\MSL^\wedge_{H\Z/l})^{2u,u}\cong 
 (\MSL^\wedge_{l})^{2u,u}$.\\
 2.  For $r\geq2$, the differentials $d_r^{\MSL,2u-1,u}$ and $d_r^{\MSL,2u,u}$ vanish; in consequence, $E_2^{2u,u}(\MSL)\cong E_\infty^{ 2u,u}(\MSL)$, and for each $u\in \Z$, the mod $l$ Adams spectral sequence for $\MSL$ converges completely to $(\MSL^\wedge_{H\Z/l})^{2u,u}\cong 
 (\MSL^\wedge_{l,\eta})^{2u,u}$.
\end{prop}

\begin{proof} For (1), the isomorphism $(\MGL^\wedge_{H\Z/l})^{2u,u}\cong (\MGL^{2u,u})^\wedge_l$ follows from Proposition~\ref{prop:compl of MGL}. By Proposition~\ref{prop:ext_alg_MGL}, we have $E_2^{2u+1, u}(\MGL)=0$, so $d_r^{\MGL,2u,u}=0$. 

We use the multiplicative structure on the Adams spectral sequence given by Proposition~\ref{prop:MultASS}. From Proposition~\ref{prop:ext_alg_MGL}(1), we have $E^{1, (1,1)}_2=H^{1,1}$ and 
$E^{1+r, (2,1)}_2=0$ for all $r\ge2$.  Thus the differential leaving  $E^{1, (1,1)}_r$ is zero for all $r\ge2$. Also from Proposition~\ref{prop:ext_alg_MGL}(1), the product map
\[
E^{1, (1,1)}_2\otimes_{\Z/l} E^{s-1, (2u-2,u-1)}_2\to E^{s, (2u-1,u)}_2
\]
is surjective.  Using the multiplicative structure, the vanishing of the differentials  $d_r^{\MGL,2u,u}$ implies the vanishing of $d_r^{\MGL,2u-1,u}$ for all $u$. 

Since the differentials $d_r^{\MGL,2u,u}$ and $d_r^{\MGL,2u-1,u}$  are zero, we have
\[
E_{r+1}^{s, 2u, u}=E_{r}^{s, 2u, u},\ E_{r+1}^{s, 2u-1, u}=E_{r}^{s, 2u-1, u},
\]
for $r>s$, so $\varprojlim^1_rE_r^{s, 2u,u}(\MGL)=0=\varprojlim^1_rE_r^{s, 2u-1,u}(\MGL)$, which verifies the complete convergence.  

The proof of (2) is exactly the same as the proof of (1), using Proposition~\ref{prop:ext_alg_MGL}(2) instead of Proposition~\ref{prop:ext_alg_MGL}(1) for the vanishing of the differentials, and using Theorem \ref{thm:intro_MASS} for the identification of the $H\Z/l$-completion (we do not expect that
$(\MSL^\wedge_{l,\eta})^{2u,u}=(\MSL^\wedge_{l})^{2u,u}$ in general).
\end{proof}

\begin{lemma}\label{lem:fix1}
 For any $s\in\bbZ$, we have the following.
\begin{enumerate}
\item $(\MSL^\wedge_{\eta,l})^{2s,s}\inj (\MGL^\wedge_{l})^{2s,s}$; \label{lem:item1}
\item $ (\MSL^\wedge_{\eta,l})^{2s,s} \cong \varprojlim_n ((\MSL^{\wedge}_{\eta}/l^n)^{2s, s})$;  \label{lem:item2}
\item there is a natural injective map
$\Phi: ((\MSL_\eta^\wedge)^{2s,s})^{\wedge}_l \to (\MSL^{\wedge}_{\eta, l})^{2s, s}$.   \label{lem:item3}
\end{enumerate}

\end{lemma}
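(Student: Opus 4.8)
I would prove the three parts in order, with \eqref{lem:item2} and \eqref{lem:item3} turning out to be formal once \eqref{lem:item1} is in hand. For \eqref{lem:item1} the plan is to compare the mod-$l$ motivic Adams spectral sequences of $\MSL$ and $\MGL$. As recorded above, both $\MGL$ and $\MSL$ satisfy the hypotheses of Proposition~\ref{DI1} and the Mittag--Leffler condition holds for $\{E_r^{s,t,u}(X)\}_{r\ge 2}$, so both spectral sequences converge completely; by Theorem~\ref{thm:intro_MASS} the abutments are $\pi_{*,*}(\MGL^\wedge_{\eta,l})$ and $\pi_{*,*}(\MSL^\wedge_{\eta,l})$, and complete convergence equips $(\MSL^\wedge_{\eta,l})^{2s,s}$ and $(\MGL^\wedge_{\eta,l})^{2s,s}$ with exhaustive, complete, Hausdorff filtrations $F_\bullet$ whose associated gradeds are the $E_\infty$-pages in total bidegree $(2s,s)$. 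By Proposition~\ref{diff_MGL} and Proposition~\ref{prop:degen_MSL} (taking the weight $u=s$) the relevant differentials vanish, so $E_2^{2s,s}(X)=E_\infty^{2s,s}(X)$ for $X=\MGL,\MSL$, and $\phi\colon E_2^{2s,s}(\MSL)\to E_2^{2s,s}(\MGL)$ is injective. Thus $\phi\colon(\MSL^\wedge_{\eta,l})^{2s,s}\to(\MGL^\wedge_{\eta,l})^{2s,s}$ is a filtered map injective on associated gradeds; since the source filtration is exhaustive and Hausdorff, $\phi$ is itself injective. Finally $(\MGL^\wedge_{\eta,l})^{2s,s}\cong(\MGL^{2s,s})^\wedge_l\cong(\MGL^\wedge_l)^{2s,s}$ by Proposition~\ref{prop:compl of MGL} and Lemma~\ref{lem:for MGL} (using Theorem~\ref{thm:intro_MASS}), giving \eqref{lem:item1}.

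For \eqref{lem:item2} I would use the Milnor exact sequence for $\MSL^\wedge_{\eta,l}=\varprojlim_n\MSL^\wedge_\eta/l^n$,
\[
0\to\varprojlim_n^1(\MSL^\wedge_\eta/l^n)^{2s-1,s}\to(\MSL^\wedge_{\eta,l})^{2s,s}\xrightarrow{\ \pi\ }\varprojlim_n(\MSL^\wedge_\eta/l^n)^{2s,s}\to0,
\]
and argue that $\pi$ is injective. Since $\eta$ acts as zero on $\MGL$ (cf.\ Lemma~\ref{lem:MGL eta}) it acts as zero on $\MGL^\wedge_l=\varprojlim_n\MGL/l^n$, so $\MGL^\wedge_l$ is already $\eta$-complete; hence $\MSL\to\MGL\to\MGL^\wedge_l$ $\eta$-completes to a map $\MSL^\wedge_\eta\to\MGL^\wedge_l$, and reducing mod $l^n$ yields maps $\MSL^\wedge_\eta/l^n\to\MGL^\wedge_l/l^n\simeq\MGL/l^n$ compatible with the towers. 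Passing to $\varprojlim_n$ and using that the Milnor $\varprojlim^1$ for $\MGL$ vanishes (Lemma~\ref{lem:for MGL}), one checks, unwinding the constructions, that the composite
\[
(\MSL^\wedge_{\eta,l})^{2s,s}\xrightarrow{\ \pi\ }\varprojlim_n(\MSL^\wedge_\eta/l^n)^{2s,s}\longrightarrow\varprojlim_n(\MGL/l^n)^{2s,s}\cong(\MGL^\wedge_l)^{2s,s}
\]
is the injective map of \eqref{lem:item1}. Therefore $\pi$ is injective, the $\varprojlim^1$-term vanishes, and $\pi$ is the asserted isomorphism.

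For \eqref{lem:item3} I would take the universal-coefficient short exact sequence for $\MSL^\wedge_\eta$ in bidegree $(2s,s)$ — the exact analogue of \eqref{equ:ses of mod p} — whose first term gives a natural inclusion $(\MSL^\wedge_\eta)^{2s,s}/l^n\inj(\MSL^\wedge_\eta/l^n)^{2s,s}$ compatible with the towers in $n$; applying the left-exact functor $\varprojlim_n$ and composing with the isomorphism of \eqref{lem:item2} produces the desired natural injection $\Phi\colon\bigl((\MSL^\wedge_\eta)^{2s,s}\bigr)^\wedge_l\inj(\MSL^\wedge_{\eta,l})^{2s,s}$.

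The main obstacle lies entirely in \eqref{lem:item1}: what really has to be checked is that both spectral sequences converge \emph{completely} — verifying for $X=\MGL,\MSL$ the hypotheses of Proposition~\ref{DI1} (freeness of $H\bideg(X,\Z/l)$ over $H\bideg$, the finite-type condition on the Adams tower, and the duality isomorphism $H_{*,*}(X,\Z/l)\xrightarrow{\sim}\Hom_{H\bideg}(H\bideg(X,\Z/l),H\bideg)$), the vanishing $\varprojlim_r^1E_r^{s,t,u}=0$, and the identifications of $X^\wedge_{H\Z/l}$ with $X^\wedge_{\eta,l}$ and, for $X=\MGL$, of $(\MGL^\wedge_{\eta,l})^{2*,*}$ with $(\MGL^{2*,*})^\wedge_l$ — so that the abutment filtrations are Hausdorff. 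Granting this, the degeneration Propositions~\ref{diff_MGL} and \ref{prop:degen_MSL} reduce the injectivity in \eqref{lem:item1} to the injectivity on $E_2$-pages already recorded, and \eqref{lem:item2}, \eqref{lem:item3} follow formally as above.
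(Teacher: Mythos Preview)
Your proposal is correct and follows essentially the same route as the paper. For \eqref{lem:item1} the paper likewise invokes Propositions~\ref{diff_MGL}, \ref{prop:degen_MSL} and Theorem~\ref{thm:intro_MASS} to get injectivity of $(\MSL^\wedge_{\eta,l})^{2s,s}\to(\MGL^\wedge_l)^{2s,s}$ via the spectral-sequence comparison you spell out; for \eqref{lem:item2} it uses the same commutative square of Milnor sequences together with $\varprojlim_n^1(\MGL/l^n)^{2s-1,s}=0$ (from Lemma~\ref{lem:for MGL}) and the injectivity from \eqref{lem:item1} to kill the $\varprojlim^1$-term for $\MSL^\wedge_\eta$; and for \eqref{lem:item3} it takes exactly your universal-coefficient injection $(\MSL^\wedge_\eta)^{2s,s}/l^n\inj(\MSL^\wedge_\eta/l^n)^{2s,s}$ and passes to $\varprojlim_n$ using \eqref{lem:item2}. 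The only cosmetic difference is that the paper records the two Milnor sequences as a single commutative diagram up front and reads off both \eqref{lem:item1} and \eqref{lem:item2} from it, whereas you separate the two steps.
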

\begin{proof}
In the following commutative diagram
\[
\xymatrix @R=1.5em{
0\ar[r] &\varprojlim_n{}^1(\MSL^{\wedge}_{\eta}/l^n)^{2s-1, s} \ar[r] \ar[d]&(\varprojlim_n\MSL^{\wedge}_{\eta}/l^n)^{2s, s} \ar[r] \ar@{^{(}->}[d]&\varprojlim_n (\MSL^{\wedge}_{\eta}/l^n)^{2s, s}
\ar[r] \ar[d]&0\\
0\ar[r] &\varprojlim_n{}^1(\MGL/l^n)^{2s-1, s} \ar[r] &(\varprojlim_n\MGL/l^n)^{2s, s} \ar[r] &\varprojlim_n (\MGL/l^n)^{2s, s}
\ar[r] &0,
}\]
the middle map $(\MSL^\wedge_{\eta,l})^{2s,s}\inj (\MGL^\wedge_{l})^{2s,s}$ is injective 
by Propositions~\ref{diff_MGLMSL}  and Theorem~\ref{thm:intro_MASS}. This implies \eqref{lem:item1}.

Also, by the proof of Lemma \ref{lem:for MGL}, $\varprojlim_n{}^1(\MGL/l^n)^{2s-1, s}=0$. Using the conclusion of \eqref{lem:item1}, it implies $\varprojlim_n{}^1(\MSL^{\wedge}_{\eta}/l^n)^{2s-1, s}=0$. Therefore, we have $ (\MSL^\wedge_{\eta,l})^{2s,s} \cong \varprojlim_n ((\MSL^{\wedge}_{\eta}/l^n)^{2s, s})$. This implies \eqref{lem:item2}.

Using the diagram \eqref{equ:ses of mod p} with $\MGL$ replaced by $\MSL^{\wedge}_{\eta}$, we have the exact sequence
\begin{equation}\label{eqn:MSL_LES}
0\rightarrow \varprojlim_n ((\MSL^{\wedge}_{\eta})^{s, t}/l^n)\rightarrow 
\varprojlim_n ((\MSL^{\wedge}_{\eta}/l^n)^{s, t}) \rightarrow  \varprojlim_n ({}_{l^n-\tors}(\MSL^{\wedge}_{\eta})^{s+1, t})\rightarrow  \varprojlim {}^{1}((\MSL^{\wedge}_{\eta})^{s+1, t}/l^n)
\end{equation}
for any $s,t\in\bbZ$. 
In particular, we have an injection $\varprojlim_n((\MSL^\wedge_\eta)^{2s,s}/l^n)\inj \varprojlim_n((\MSL^\wedge_\eta/l^n)^{2s,s})$ for any $s\in\bbZ$. As we know from \eqref{lem:item2}, $\varprojlim_n ((\MSL^{\wedge}_{\eta}/l^n)^{2s, s})$ is isomorphic to  $(\MSL^\wedge_{\eta,l})^{2s,s}$. This implies  \eqref{lem:item3}. 
\end{proof}


\subsection{The coefficient rings}

\begin{proof}[Proof of Theorem~\ref{thm:MSL}]  We first discuss the completion $(\MGL_{\eta,l}^\wedge)^*=(\MGL^*)^\wedge_l$. 

Since $\MGL^*\otimes\Z_{(l)}\cong \Z_{(l)}[x_1, x_2,\ldots]$ with $\deg x_i=-i$, $\MGL^n\otimes\Z_{(l)}$ is a finitely generated $\Z_{(l)}$-module for every $n$ and thus $(\MGL^*)^\wedge_l=\MGL^*\otimes{\ZZ_l}=\Z_l[x_1, x_2,\ldots]$.  

 Letting $Q(\MGL^*)^\wedge_l$ denote the $\Z_l$-module of indecomposables, we thus have the isomorphism of graded $Z_l$-modules
\[
Q(\MGL^*)^\wedge_l=\oplus_{n\ge1}\Z_l\cdot [x_n].
\]
We have the Newton class $c_{(n)}\in \Z[c_1, c_2,\ldots]$, corresponding to the partition $(n)$ of $n$, that is, to the symmetric function $\sum_i\xi_i^n$.   Note that $c_{(n)}(p_1^*V_1\otimes p_2^*V_2)=0$ for $V_i$  vector bundles on $X_i\in \Sm_k$, $p_i:X_1\times X_2\to X_i$ the projection, so by Theorem~\ref{thm:MSLLifitng}(4),  $c_{(n)}$ vanishes on decomposable elements in $\MGL^*\otimes{\ZZ_l}$ and 
\[
\nu_l(c_{(n)}(x_n))=\begin{cases}0&\text{ for }n\neq l^r-1\\ 1&\text{ for }n = l^r-1, r\ge1.
\end{cases}
\]
Thus a family of element $(y_n\in (\MGL^n)^\wedge_l)_{n\ge2}$ give polynomial generators of $(\MGL^*)^\wedge_l$ if and only if 
\[
\nu_l(c_{(n)}(y_n))=\begin{cases}0&\text{ for }n\neq l^r-1\\ 1&\text{ for }n = l^r-1, r\ge1.
\end{cases}
\] 

The spectral sequence filtration $F^*(\MGL^*)^\wedge_l$ has associated graded $\bigoplus_{u} E_\infty^{2u,u}(\MGL)=\bigoplus_{u} E_2^{2u,u}(\MGL)$, which is a graded $\Z/l$-algebra. Since $E_2^{0,(0,0)}(\MGL)=\Z/l$, it follows that $F^m((\MGL^0)^\wedge_l)=(l^m)\Z_l$, from which it  follows that the polynomial generators  $z_{(k)}\in E_2^{(0,(-2k,-k))}=E_\infty^{(0,(-2k,-k))}$, $k\neq l^r-1$, and $h_r'\in E_2^{(1,(1-2l^{r+1},1-l^{r+1})}=E_\infty^{(1,(1-2l^{r+1},1-l^{r+1})}$ lift to polynomial generators $\tilde{z}_{(k)}\in F^0(\MGL^{-2k, -k})_l^\wedge)$ and $\tilde{h}_r'\in F^1(\MGL^{-2(l^{r+1}-1), -(l^{r+1}-1)})_l^\wedge)$ for $(\MGL^*)^\wedge_l$ as $\Z_l$-algebra. The elements $\tilde{z}_{(k)}, \tilde{h}_r'$ are not uniquely defined, but are unique modulo decomposable elements plus elements in $l\cdot (\MGL^*)^\wedge_l$. Thus we have
$\nu_l(c_{(k)}(\tilde{z}_{(k)}))=0$ for all $k$ and $\nu_l(c_{(l^r-1)}(\tilde{h}_r))=1$.

We now consider the spectral sequence for $((\MSL^\wedge_{\eta,l})^*)$, with filtration $F^*\MSL^{\wedge *}_{\eta,l}$ on $\MSL^{\wedge *}_{\eta,l}\subset (\MGL^*)^\wedge_l$. As above, we find that the elements $z_{(k)}$, $k\neq l^i, l^i-1$,  $z_{\yu_r}$, $r\ge0$ and $h'_r$, $r\ge1$,  lift to elements $\tilde{z}'_{(k)}\in  \MSL^{\wedge k}_{\eta,l}$, $\tilde{z}_{\yu_r}\in  \MSL^{\wedge l^{r+1}}_{\eta,l}$ and $\tilde{h}'_r\in  F^1\MSL^{\wedge l^{r+1}}_{\eta,l}$ which give polynomial generators for 
$\MSL^{\wedge *}_{\eta,l}$ as $\Z_l$-algebra. 

The morphism $\MSL\to \MGL$ induces a map of spectral sequences, which is an isomorphism on the summands of the $E_2$-terms corresponding to the summands  $M_B\cdot u_{\yu}$, $\yu\in P_l$, of $H^{*, *}(\MSL)$ and $H^{*, *}(\MGL)$ described in Lemma~\ref{lem:MSL_Mbeta}. In particular, taking $\yu=(k)$, $k\neq l^i, l^i-1$, we see that the image of $\tilde{z}'_{(k)}$ in $(\MGL^*)^\wedge_l$ is equal to $\tilde{z}_{(k)}$ modulo decomposables and $l\cdot (\MGL^*)^\wedge_l$; taking $\yu=(0)$ shows that  the image of $\tilde{h}'_r$ in $(\MGL^*)^\wedge_l$ is equal to $\tilde{h}_r$ modulo decomposables and $l\cdot (\MGL^*)^\wedge_l$. Thus 
\[
\nu_l(c_{(k)}(\tilde{z}'_{(k)}))=0,\   \nu_l(c_{(l^r-1)}(\tilde{h}'_r))=1.
\]
For $\tilde{z}_{\yu_r}$,  let $c_{\yu_r}\in H^{2l^{r+1}, l^{r+1}}(\MGL,\Z_l)$ be the cohomology class corresponding to the virtual partition $\yu_r$. As the generator $z_{\yu_r}$ of the corresponding $E_2$-term is dual to $c_{\yu_r}$, it follows that $c_{\yu_r}(\tilde{z}_{\yu_r})\equiv 1\mod   l$. Since $\tilde{z}_{\yu_r}$ comes from $\MSL^{\wedge l^{r+1}}_{\eta,l}$, it follows that $c_1\cap \tilde{z}_{\yu_r}=0$,   so by definition of $\yu_r$ we have 
 \[
 c_{(l^{r+1})}(\tilde{z}_{\yu_r})=l\cdot c_{\yu_r}(\tilde{z}_{\yu_r})
\]
and thus $\nu_l(c_{(l^{r+1})}(\tilde{z}_{\yu_r}))=1$.

Since the Newton classes $c_{(n)}$ vanish on decomposables, this gives the following criterion for a family of  elements $(y_k'\in (\MSL^\wedge_{\eta,l})^{-2k, -k})_{k\ge2}$ to form a system of polynomial generators for $(\MSL^\wedge_{\eta,l})^*$, namely
\[
c_{(k)}(y_k')=\begin{cases}\lambda_k\in \Z_l^\times&\text{ for }k\neq l^r-1, l^r,\  r\ge1\\
\lambda_k'\cdot l,&\text{ with }\lambda_k' \in \Z_l^\times\text{ for }k=l^r, l^r-1,\  r\ge1.
\end{cases}
\]
This follows the chararcterization of polynomial generators for $\MSU^*$ given by Novikov  \cite[Theorem 8]{Nov}.

We now show that the map $\Phi: ((\MSL_\eta^\wedge)^{*})^{\wedge}_l \to (\MSL^{\wedge}_{\eta, l})^{*}$ of Lemma~\ref{lem:fix1} is an isomorphism; it suffices to show that $\Phi$ is surjective. 
We consider the following composition map 
\[
\xymatrix{
 (\MSL^{*})^\wedge_l\ar[r]^{\pi} &((\MSL_\eta^\wedge)^{*})^{\wedge}_l \ar[r]^{\Phi}  &(\MSL^{\wedge}_{\eta, l})^{*}.
}\]
To prove the surjectivity of $\Phi$, it suffices to show that $\Phi\circ\pi$ is surjective.

By  Theorem~\ref{thm:MSLLifitng}, there exists a class $[X,\theta]_{\MSL}$ in $\MSL^{-2n,-n}(k)$ associated to a dimension $n$ Calabi-Yau manifold $X$ together with an isomorphism $\theta:\omega_{X/k}\cong \calO_X$, with $[X,\theta]_{\MSL}$ lifting the class $[X]_{\MGL}\in \MGL^{-2d_X, -d_X}(k)$ under the projection $\MSL\to \MGL$. In particular, the image of $[X,\theta]_{\MSL}$ in $\MGL^{-2n,-n}(k)$ is the same as that given by the image of $X$ in the Lazard ring (after inverting $p=char(k)$). Thus, it suffices to show that $({\MSL}^{\wedge}_{\eta, l})^{*}\subset ({\MGL}^{\wedge}_{l})^{*}$ is generated (as $\Z_l$-module) by classes $[X]_{\MGL}$ of smooth projective Calabi-Yau manifolds $X$ defined over $k$. The necessary collection of smooth projective Calabi-Yau manifolds $X$ is furnished by Lemma~\ref{lem:Hypersurf} below.

As the image $\overline{(\MSL^\wedge_\eta)^*[1/2p]}$ of $(\MSL^\wedge_\eta)^*[1/2p]$ in $\MGL^*[1/2p]$ is degreewise a finitely generated $\Z[1/p]$ submodule, we have
\[
\overline{(\MSL^\wedge_\eta)^*[1/2p]}\otimes_{\Z[1/2p]}\Z_l\cong (\MSL^{\wedge *}_\eta)^\wedge_l\cong (\MSL^\wedge_{\eta,l})^*
\]
so we have the following criterion for a family of  elements $(y_k'\in \overline{(\MSL^\wedge_\eta)^{-2k, -k}[1/2p]})_{k\ge2}$ to form a system of polynomial generators for $\overline{(\MSL^\wedge_\eta)^*[1/2p]}$, namely
\begin{equation}\label{Chern number sn} 
c_{(k)}(y_k')=\begin{cases}\lambda_k\in \Z[1/2p]^\times&\text{ for }k\neq l^r-1, l^r, l\text{ a prime, } l\neq 2,p,  \text{ and } r>0\\
\lambda_k'\cdot l,&\text{ with }\lambda_k' \in \Z[1/2p]^\times\text{ for }k=l^r, l^r-1, l\text{ a prime, } l\neq 2,p, \text{ and } r>0.
\end{cases}
\end{equation}

\end{proof}

\begin{lemma}\label{lem:Hypersurf}
For given $n\ge2$ and this choice of $n_1,\ldots, n_r$, a  smooth hypersurface $H_n$ of $\bbP^{n_1} \times\cdots\times \bbP^{n_r}$ of multi-degree  $d_1,\dots, d_r$  satisfies Chern number condition \eqref{Chern number sn}. 
\end{lemma}

\begin{proof}

Fix $n\ge2$. For $n_1,\ldots, n_r$ positive integers, let $d_i=n_i+1$. Then each smooth hypersurface $H$ of multi-degree $d_1,\dots, d_r$ in $\bbP^{n_1} \times\cdots\times \bbP^{n_r}$ is a Calabi-Yau manifold.   

We consider the hypersurfaces constructed in 
 \cite[Page 241]{S68} for each of the three cases in \eqref{Chern number sn}. An elementary computation\footnote{One computes $c_{(n)}$ from the formula on \cite[Page 241]{S68} by taking the logarithm of the total Chern class of the tangent bundle of $H_n$, noting that the degree $n$ term in $\log(c(T_{H_n}))$ is $((-1)^n/n)\sum_i\xi_i^n$, where the $\xi_i$ are the Chern roots of $T_{H_n}$.} following the argument in \cite[\hbox{\it loc. cit.}]{S68} shows that for given $n\ge2$ and this choice of $n_1,\ldots, n_r$, a  smooth hypersurface $H_n$ of $\bbP^{n_1} \times\cdots\times \bbP^{n_r}$ of multi-degree  $d_1,\dots, d_r$  satisfies Chern number condition \eqref{Chern number sn}. 
 
For an arbitrary infinite field $k$, such a smooth $H_n$ always exists, by Bertini's theorem. Thus,  for each $n$, there is a smooth projective Calabi-Yau manifold $H_n$ defined over $k$ whose Chern numbers satisfy  \eqref{Chern number sn}. Thus the classes $[X]_{\MGL}$ of smooth projective Calabi-Yau manifolds $X$ defined over $k$ give polynomial generators (over $\Z^\wedge_l$) of ${\MSL}^{\wedge}_{\eta, l})^{*}$ and hence  $\Phi\circ\pi$ is surjective.

If $k$ is a finite field, there is for each prime $q$ (including $q=p$) a pro-$q$-power  infinite extension $L_q$ of $k$ and thus a smooth $H_{n,q}$ as above, defined over a finite extension $L_q'$ of $k$ of  degree $q^{\nu_q}$.  Taking norms from $L_q'$ down to $k$, we have the class 
$\pi_{L'_q/k*}[H_{n,q},\theta]_{\MSL}$ in $\MSL^{-2n,-n}(k)$ that maps to $q^{\nu_q}$ times a degree $n$ polynomial generator of   $({\MSL}^{\wedge}_{\eta, l})^{*}$; since $q$ was an arbitrary prime, this shows that $\Phi\circ\pi$ is surjective in this case as well.
\end{proof}

\begin{remark} Compare with  \cite[Theorem 8]{Nov}. 
\end{remark}

\section{Elliptic genus of $\MSL$-varieties}
\label{sec:EllGenus}

In this section, we use \cite{LYZ} and Theorem \ref{thm:MSL} to prove 
Theorem~\ref{thm:elliptic genus}. We recall from the introduction the ring $\tilde\Ell:=\bbZ[a_1,a_2,a_3, (1/2)a_4]$, the family of elliptic curves $\sE_{\tilde\Ell}$ over $\Spec \tilde\Ell$ with discriminant $\Delta\in \tilde\Ell$ and the localization $\Ell:=\tilde\Ell[\Delta^{-1}]$.
\subsection{Summary of previous work}
\label{subsec:summary_LYZ1}
In \cite{LYZ}, we studied the algebraic Krichever elliptic genus $\phi: \Laz\to \tilde\Ell$, and $\phi$ is given by the Baker-Akhiezer function \cite[(3.1)]{LYZ}.

\begin{theorem}[\cite{LYZ}]\label{thm: ellip_exact_intro}
Let $k$ be a perfect field of exponential characteristic $p$. The oriented cohomology theory on $\Sm_k$ in the sense of \cite{LM} sending 
\[
X\mapsto \MGL^*(X)\otimes_{\Laz}\Ell[1/2p],
\]
 is represented by a motivic oriented cohomology theory on $\Sm_k$ in the sense of \cite{PS}.
\end{theorem}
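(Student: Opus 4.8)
The plan is to reduce the statement, via the motivic Landweber exactness theorem of Naumann, Spitzweck and {\O}stv{\ae}r, to a purely algebraic Landweber-exactness check for the ring $\Ell[\Delta^{-1}]$. Recall (see \S\ref{sec:prelim}) that $\MGL^{2*,*}(k)\cong\Laz$, at least after inverting $p$ and integrally in characteristic zero. Motivic Landweber exactness says that for any Landweber exact graded $\Laz$-algebra $R$, the bigraded functor $X\mapsto\MGL^{*,*}(X)\otimes_{\Laz}R$ on $\SH(k)$ (over our base $k$) is a representable ring cohomology theory, represented by an $\MGL$-module spectrum $E_R$ which inherits an orientation from $\MGL$; restricting $E_R^{*,*}(-)$ to bidegrees $(2*,*)$ and to $\Sm_k$ then yields an oriented ring cohomology theory in the sense of \cite{PS}, whose formal group law is the one classified by the structure map $\Laz\to R$. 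So the whole statement follows once we show that $R:=\Ell[\Delta^{-1}]$, made into a $\Laz$-algebra through the Krichever genus $\phi$ of \cite[\S3.1]{LYZ}, is Landweber exact, and then match the resulting theory on $\Sm_k$ with the Levine--Morel free theory $X\mapsto\MGL^*(X)\otimes_{\Laz}R$ using $\MGL^{2*,*}\cong\Laz$ (\cite[Proposition~8.2]{Hoy}) and the universal property of algebraic cobordism in the sense of \cite{LM}.

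To check Landweber exactness, recall that a graded $\Laz$-algebra $R$ is Landweber exact iff for every rational prime $q$ (with $R/q\neq0$, else the condition at $q$ is vacuous) the sequence $q,v_1,v_2,\dots$ of images of the Hazewinkel generators is regular, i.e.\ $v_n$ is a nonzerodivisor on $R/(q,v_1,\dots,v_{n-1})$ for all $n\ge1$. Here $\phi$ classifies the formal group law $\hat{C}$ of the Weierstrass curve $C\to\Spec\Ell$ in the uniformizer $t=y/x$, and inverting $\Delta$ makes $C\to\Spec R$ a smooth elliptic curve. Thus, modulo a prime $q$, the formal group $\hat{C}\bmod q$ has height $1$ at ordinary points and height $2$ at supersingular points; up to a unit, $v_1\bmod q$ is the Hasse invariant, a nonzero element of the domain $R/q$, and for every maximal ideal $\mathfrak{m}\supseteq(q,v_1)$ of $R$ the base-changed curve over $R/\mathfrak{m}$ is supersingular (Hasse invariant vanishes), hence of height exactly $2$, so $v_2\notin\mathfrak{m}$; therefore $v_2$ lies in no maximal ideal of $R/(q,v_1)$ and is a unit there. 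It follows that $q$ is a nonzerodivisor on the domain $R$, $v_1$ is a nonzerodivisor on the domain $R/q$, $v_2$ is a unit on $R/(q,v_1)$, and $R/(q,v_1,v_2)=0$, so the remaining $v_n$ ($n\ge3$) act on the zero ring and the regularity condition holds. This is exactly the Landweber exactness of the two-variable elliptic genus exploited by Totaro in \cite{Totaro}; over a more standard presentation it goes back to Landweber--Ravenel--Stong and to Franke.

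The step I expect to absorb most of the work is making the height stratification of $\hat{C}\bmod q$ genuinely precise over the non-local, possibly non-reduced ring $R/q$: identifying $v_1\bmod q$ with (a unit multiple of) the Hasse invariant, confirming it is a nonzero element of $R/q$, and verifying that $v_2$ really is a unit on $R/(q,v_1)$ for \emph{all} $q$, including $q=2$ (where the supersingular $j$-invariant is $0$). Concretely I would extract $v_1\bmod q$ and $v_2\bmod(q,v_1)$ as explicit polynomials in $a_1,a_2,a_3,a_4$ from the logarithm of $\hat{C}$ --- that is, from the Baker--Akhiezer expansion defining $\phi$ in \cite{LYZ} --- and run the computation directly, simultaneously checking that the normalization of $\phi$ used here agrees with a genus already known to be Landweber exact. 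Once this is done, the Naumann--Spitzweck--{\O}stv{\ae}r theorem supplies the representing $\MGL$-module spectrum $E_R$ together with its \cite{PS}-orientation, and the comparison of oriented theories over $\Sm_k$ through $\Laz\cong\MGL^{2*,*}$ finishes the proof.
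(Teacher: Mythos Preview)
This theorem is not proved in the present paper: it appears in \S\ref{subsec:summary_LYZ1} as a quotation of a result from \cite{LYZ}, so there is no ``paper's own proof'' here to compare against. Your strategy---reduce to the motivic Landweber exactness theorem of Naumann--Spitzweck--{\O}stv{\ae}r and then verify Landweber exactness of $\Ell[\Delta^{-1}]$ via the height stratification of the associated elliptic curve---is the standard route and is essentially what \cite{LYZ} does; the argument you sketch (regularity of $(q,v_1,v_2,\dots)$ using that $v_1$ is the Hasse invariant and $v_2$ is a unit on the supersingular locus) is correct. One small remark: since $2$ is already inverted in $\Ell$, the case $q=2$ is vacuous and you need not worry about it separately; for odd $q$ your Jacobson-ring argument that $v_2$ is a unit on $R/(q,v_1)$ is fine because $R/(q,v_1)$ is a finite-type $\bbF_q$-algebra localized at $\Delta$.
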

This theorem gives a well-defined notion of  Krichever's elliptic cohomology with coefficients $\ZZ[1/2p]$ of a variety $X$.

Let $\MGL^*_\QQ$ (resp. $\Ell^*_\QQ$) be the $\MGL$-cobordism theory (resp. elliptic cohomology theory) with rational coefficients.
The main focus of \cite{LYZ} is to study $\phi_\bbQ: \MGL^*_\QQ(k)\to \Ell^*_\QQ(k)$ when $k$ is an arbitrary perfect field. Recall that two smooth projective $n$-folds $X_1$ and $X_2$ are related by a flop if we have the following diagram of projective birational morphisms:
\begin{equation}
\xymatrix @R=0.5em{
& \widetilde{X}\ar[ld]\ar[rd] & \\
X_1\ar[rd]^{p_1}& & X_2\ar[ld]_{p_2}\\
& Y & 
}
\end{equation}
Here $Y$ is a singular projective $n$-fold with singular locus $Z$, such that $Z$ is smooth of dimension $n-2k+1$. We assume in addition that there exist rank $k$ vector bundles $A$ and $B$ on $Z$, such that the exceptional locus $F_1$ in $X_1$ is the $\PP^{k-1}$-bundle $\PP(A)$ over $Z$, with normal bundle $N_{F_1}X_1=B\otimes \sO_A(-1)$. Similarly, the exceptional locus $F_2$ in $X_2$ is $\PP(B)$, with normal bundle $N_{F_2}X_2=A\otimes \sO_B(-1)$.
Let $Q^3\subset \PP^4$ denote the 3-dimensional quadric with an ordinary double point $v$, defined by the equation $x_1x_2=x_3x_4$.  We say that $X_1$ and $X_2$ are related by a {\em classical} flop if in addition  $k=2$, and  along $Z$, $(Y,Z)$ is {\em Zariski} locally isomorphic to $(Q^3\times Z, v\times Z)$.

Let  $\calI_{fl}\subseteq \MGL[1/p]^*$ be the ideal generated by differences of flops.
\begin{theorem}[\cite{LYZ}]\label{thm:LYZ}
The kernel of the algebraic elliptic genus $\phi_\bbQ: \MGL^*_\QQ(k)\to \Ell^*_\QQ(k)$ is $\calI_{fl}\otimes_{\bbZ[1/p]}\bbQ$, and its image is the  polynomial ring $\bbQ[a_1,a_2,a_3,a_4]$.
\end{theorem}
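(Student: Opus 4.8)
Rationally $\MGL^*_\QQ(k)$ is the Lazard ring $\Laz\otimes\QQ$, a polynomial $\QQ$-algebra $\QQ[u_1,u_2,\dots]$ with a single generator $u_n$ in each degree $n\ge1$ (say $u_n\equiv[\PP^n]$ modulo decomposables), while $\Ell\otimes\QQ=\QQ[a_1,a_2,a_3,a_4]$; here $\phi_\QQ$ is the ring map classifying the formal group law whose logarithm is the Krichever Baker--Akhiezer function. The plan is in two halves: (a) prove $\calI_{fl}\otimes\QQ\subseteq\ker\phi_\QQ$, i.e.\ flop invariance of the elliptic genus; (b) prove that the induced surjection $\MGL^*_\QQ(k)/(\calI_{fl}\otimes\QQ)\to\QQ[a_1,a_2,a_3,a_4]$ is an isomorphism. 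Half (b) gives at once the surjectivity of $\phi_\QQ$ onto $\QQ[a_1,a_2,a_3,a_4]$ and the reverse inclusion $\ker\phi_\QQ\subseteq\calI_{fl}\otimes\QQ$.

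For (a), fix $X_1,X_2$ related by a flop with data $(Y,Z,A,B)$ as in the statement, and let $\widetilde X\to X_i$ be the common blow-up along the exceptional loci $F_1=\PP(A)$ and $F_2=\PP(B)$. Using the blow-up formula for algebraic cobordism of Levine--Morel I would write $[X_1]-[X_2]\in\MGL^*_\QQ(k)$ as an explicit expression in $[Z]$ and the Chern classes of the rank $k$ bundles $A$ and $B$ (via $N_{F_1}X_1=B\otimes\sO_A(-1)$ and $N_{F_2}X_2=A\otimes\sO_B(-1)$), thereby reducing $\phi_\QQ([X_1]-[X_2])=0$ to an identity between the elliptic genera of the two projective bundles $\PP(A)$ and $\PP(B)$ over $Z$. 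That identity follows from the quasi-periodicity and the $x\mapsto-x$ symmetry of the Baker--Akhiezer function, which exchange the fibrewise contributions along the two exceptional loci; this is the algebraic incarnation of the rigidity computations of Krichever, H\"ohn and Totaro, and for a classical flop ($k=2$, with $(Y,Z)$ Zariski-locally $(Q^3\times Z,v\times Z)$) it is a direct residue computation on a $\PP^1$-bundle.

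For (b), I would first check that $\phi_\QQ$ is surjective by evaluating $\phi$ on projective spaces (and a couple of small hypersurfaces), reading the values off the Taylor expansion of the Baker--Akhiezer function, and verifying that these generate $\QQ[a_1,a_2,a_3,a_4]$ in degrees $1,2,3,4$; since $\Ell\otimes\QQ$ is generated in those degrees, surjectivity follows. It then remains to bound the quotient $\MGL^*_\QQ(k)/(\calI_{fl}\otimes\QQ)$ from above, for which it suffices to show that modulo $\calI_{fl}\otimes\QQ$ each generator $u_n$ with $n\ge5$ becomes a polynomial in $u_1,\dots,u_4$: one exhibits, for every $n\ge5$, an explicit classical flop of a suitable $\PP^1$-bundle over a product of projective spaces (with multidegrees chosen so that $\dim Z=n-3$) whose class difference equals $c_n\,u_n$ plus decomposables with $c_n\in\QQ^\times$, so that $u_n$ is expressible through lower generators mod $\calI_{fl}\otimes\QQ$ and inductively through $u_1,\dots,u_4$. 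Consequently $\MGL^*_\QQ(k)/(\calI_{fl}\otimes\QQ)$ is a quotient of $\QQ[u_1,\dots,u_4]$ that surjects onto $\QQ[a_1,a_2,a_3,a_4]$; comparing Poincar\'e series of these two polynomial rings on generators in degrees $1,2,3,4$ forces both maps to be isomorphisms, which yields $\ker\phi_\QQ=\calI_{fl}\otimes\QQ$.

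The main obstacle is half (a): carrying out the flop invariance of the Krichever genus purely algebraically, i.e.\ replacing Totaro's topological argument by the blow-up and projective-bundle formulas in algebraic cobordism together with the functional equations of the Baker--Akhiezer function, valid over an arbitrary perfect field. A secondary difficulty, inside (b), is arranging a family of flops flexible enough to kill every high-degree generator; once both are in hand, the surjectivity verification and the degreewise dimension count are routine.
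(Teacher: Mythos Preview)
This theorem is not proved in the present paper: it is quoted from the authors' earlier paper \cite{LYZ} as background (see \S6.1), so there is no proof here to compare against directly. That said, your outline matches the strategy of \cite{LYZ} in its broad architecture---prove flop invariance of $\phi$, then show the quotient by $\calI_{fl}\otimes\QQ$ has at most four generators---and would succeed with the right execution.

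A few points where your sketch diverges from what \cite{LYZ} actually does, as visible from the hints in this paper. For half (a), the key algebraic input is not the blow-up formula per se but the \emph{double point relation} in algebraic cobordism: equation \eqref{X1-X2} in Lemma~\ref{prop:compute s_n} records that \cite{LYZ} expresses $[X_1]-[X_2]$ as the difference of the two \emph{iterated} projective bundles $\PP_{\PP(A)}(B\otimes\sO_A(-1)\oplus\sO)$ and $\PP_{\PP(B)}(A\otimes\sO_B(-1)\oplus\sO)$, not merely $\PP(A)$ versus $\PP(B)$. The vanishing of $\phi$ on this difference then reduces, via the pushforward formula \eqref{Quill-formula}, to a specific functional identity for the Baker--Akhiezer function (a three/four-variable theta identity of Fay type), rather than just quasi-periodicity and parity. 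Your description of the mechanism is morally right but under-specifies both the geometric reduction and the analytic identity.

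For half (b), your plan is the correct one and is exactly what \cite{LYZ} carries out; the present paper reuses the same machinery for the $\SL$ case in Lemmas~\ref{lem:step2} and the following lemma, citing \cite[Lemma~6.2]{Totaro} for the explicit flops whose $s_n$ numbers hit the required generators in each degree $n\ge 5$, and \cite[Proposition~5.2]{LYZ} for the low-degree generators $W_2,W_3,W_4$ with algebraically independent images. So on this half your sketch is essentially on the nose.
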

In particular, $\calI_{fl}\subseteq\ker\phi$.
It is shown that the ideal $\calI_{fl}$ is also generated by the differences of classical flops.

\subsection{Proof of Theorem~\ref{thm:elliptic genus}}
Recall the restriction $\phi:\MGL[1/2p]^*\to\Ell[1/2p]$ to $\overline{\MSL}^\wedge_\eta[1/2p]^*\inj \MGL[1/2p]^*$ is denoted by $\overline{\phi}$. The ideal of $\SL$-flops $\calI^{\SL}_{fl}\subseteq \overline{\MSL}^\wedge_\eta[1/2p]^*$ is $\overline{\MSL}^\wedge_\eta[1/2p]^*\bigcap \calI_{fl}[1/2p]$. 
Then, Theorem~\ref{thm:LYZ} implies that $\calI^{\SL}_{fl}\subseteq \ker \overline{\phi}$.

Let $s_n$ be the Chern number
\[
s_n(X):=\deg_k(c_{(n)}(-T_X)
\]
where $c_{(n)}$ is as above   the $n$-th power sum polynomial in the Chern classes $c_1, \dots, c_n$; as a polynomial in the Chern roots $x_1, x_2, \dots, x_n$, $c_{(n)}:=x_1^n+x_2^n+\cdots +x_n^n$. Note that $s_n$ is an additive function on $\MGL^{-2n,-n}(k)$. For $X$ smooth and projective over $k$, we write $[X]$ for the $\MGL$-class  $[X]_{\MGL}$ as defined in \S\ref{sec:CY}.

Theorem \ref{thm:elliptic genus} is proved through the following lemmas. 

\begin{lemma}\label{prop:compute s_n}
Assume $X_1$ and $X_2$ are related by a classical flop. 
We denote the Chern roots (in $\CH^*$) of $A$ by $a_1, a_2$ and Chern roots of $B$ by $b_1, b_2$.
Then, we have
\begin{align*}
s_n([X_1-X_2])=\int_{Z}& \sum_{i_1+i_2+i_3+i_4=n-3, i_r\geq 0}
a_1^{i_1}a_2^{i_2}a_3^{i_3}a_4^{i_4} \\
&\left[ (-1)^{i_2} {n-1\choose i_1}+
(-1)^{i_1} {n-1\choose i_2}+
(-1)^{i_4+1} {n-1\choose i_3}+(-1)^{i_3+1} {n-1\choose i_4}
\right]. 
\end{align*}
\end{lemma}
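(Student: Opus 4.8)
The plan is to compute $s_n([X_1]-[X_2])$ directly from the geometry of a classical flop, exploiting the symmetric structure of the set-up in the bundles $A$ and $B$. Recall that the characteristic number $s_n$ of an $n$-fold is additive and, for a hypersurface or a blow-up type construction, is computed by pushing forward the degree-$n$ part of the logarithm of the total Chern class of the tangent bundle. Since $\widetilde X\to X_1$ is the blow-up of $X_1$ along $F_1=\PP(A)$ and $\widetilde X\to X_2$ is the blow-up along $F_2=\PP(B)$, the difference $[X_1]-[X_2]$ in cobordism is represented (after inverting nothing extra, as in \cite{LYZ}) by a class supported on the exceptional divisors; more precisely, the first step is to recall from \cite{LYZ} (and the description of classical flops recalled above) the standard formula expressing $[X_1]-[X_2]$ as $\pi_{Z*}$ of an explicit universal expression in the Chern roots $a_1,a_2$ of $A$ and $b_1,b_2$ of $B$ and the relative hyperplane classes.

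First I would set up the blow-up comparison: writing $E_i\subset\widetilde X$ for the common exceptional divisor, with $E_1\cong \PP(N_{F_1}X_1)=\PP(B\otimes\sO_A(-1))$ over $F_1=\PP(A)$, one has the classical formula for the tangent bundle of a blow-up, and hence
\[
c(T_{\widetilde X})=c(T_{X_i})\cdot(\text{excess term in }E_i).
\]
Taking logarithms and extracting the degree-$n$ component, the contributions of $X_1$ and of $X_2$ to $s_n$ both reduce, via the projection formula down to $\widetilde X$ and then down to $Z$, to residues/pushforwards of rational expressions in $a_1,a_2,b_1,b_2$. The second step is the key computational one: using the Quillen/residue description of $s_n$ for projective bundles, the contribution from $X_1$ becomes the coefficient extraction
\[
s_n(X_1)\ \leftrightarrow\ \int_Z \operatorname{Res}\ \frac{\text{(numerator in }a_i,b_j)}{\prod(\cdots)},
\]
and similarly for $X_2$ with the roles of $A$ and $B$ interchanged. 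Carrying out the geometric-series expansions of $1/(1-a_i)$-type factors produces the sum over $i_1+i_2+i_3+i_4=n-3$, and the four binomial coefficients $\binom{n-1}{i_r}$ with signs $(-1)^{i_2},(-1)^{i_1},(-1)^{i_4+1},(-1)^{i_3+1}$ emerge as the four terms coming from the two Chern roots of $A$ and the two of $B$, the sign flip between the $a$- and $b$-pairs reflecting that $B$ enters $X_1$ through the \emph{normal} bundle while $A$ enters $X_2$ that way (the canonical-bundle trivialization forcing $c_1(A)+c_1(B)$ to appear symmetrically, which is why only $n-1$, not $n$, appears in the binomials). The shift by $3$ in the exponent sum accounts for $\dim F_i=\dim Z+1$ together with the fibre dimension of the exceptional $\PP^1$.

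I expect the main obstacle to be bookkeeping the signs and the precise numerator in the residue formula: one must be careful that the excess normal bundle term $\sO_A(-1)$ twisting $B$ (resp.\ $\sO_B(-1)$ twisting $A$) is tracked correctly through the logarithm, and that the pushforward $\pi_{\PP(A)/Z*}$ along the $\PP^1=\PP(A)$-fibre (Gysin, i.e.\ taking the coefficient of the relative hyperplane class to the appropriate power) is applied before $\pi_{Z*}$. A clean way to organize this is to first establish the identity for $Z=\operatorname{Spec}k$ and $A,B$ arbitrary rank-$2$ bundles on a point, reducing to an explicit power-series computation with the four Chern roots treated as formal variables, and then invoke the splitting principle and functoriality of $s_n$ to pass to general $Z$. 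Once the point case is verified by matching coefficients, the general case follows formally, since both sides are $\pi_{Z*}$ of the same universal polynomial in $a_1,a_2,b_1,b_2$.
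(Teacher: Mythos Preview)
Your approach differs from the paper's, and while it could be made to work, it is considerably more roundabout and has a genuine gap in the final reduction step.

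The paper does not pass through the common resolution $\widetilde X$ at all. Instead it invokes the identity, already established in \cite[Lemmas~4.2 and~4.4]{LYZ} via the double point relation, that in $\MGL(k)$
\[
[X_1]-[X_2]=\bigl[\PP_{\PP(A)}(B\otimes\sO_{\PP(A)}(-1)\oplus\sO)\bigr]-\bigl[\PP_{\PP(B)}(A\otimes\sO_{\PP(B)}(-1)\oplus\sO)\bigr].
\]
Each term on the right is an iterated projective bundle over $Z$, so $s_n$ is obtained by two successive applications of Quillen's pushforward formula
\[
\pi_*\bigl(f(c_1(\sO(1)))\bigr)=\sum_i\frac{f(-\lambda_i)}{\prod_{j\neq i}(\lambda_j-\lambda_i)},
\]
after which the stated expression in $a_1,a_2,b_1,b_2$ is a direct expansion. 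There is no blow-up comparison, no excess-term bookkeeping, and no logarithm of a total Chern class; the double point relation has already repackaged all of that.

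Your route via $c(T_{\widetilde X})=c(T_{X_i})\cdot(\text{excess term})$ is in principle viable, but the final step you propose---``establish the identity for $Z=\Spec k$ and $A,B$ arbitrary rank-$2$ bundles on a point''---does not do what you want. Over a point every bundle is trivial, all Chern roots vanish, and the identity degenerates to $0=0$. What you actually need is to check the equality as a formal polynomial identity in the symbols $a_1,a_2,b_1,b_2$; the splitting principle then transports this to general $Z$. That is a legitimate strategy, but it is not ``the point case'', and the distinction matters because the entire content of the lemma lives in the coefficients of the monomials $a_1^{i_1}a_2^{i_2}b_1^{i_3}b_2^{i_4}$.
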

\begin{proof}
By the double point relation of $\MGL$, we show in \cite[Lemmas 4.2 and 4.4]{LYZ}  that in $\MGL(k)$, 
\begin{equation}\label{X1-X2}
[X_1-X_2]=
\bbP_{\bbP(A)}(B\otimes \sO_{\bbP(A)}(-1)\oplus \sO)-
\bbP_{\bbP(B)}(A\otimes \sO_{\bbP(B)}(-1)\oplus \sO).
\end{equation}
Note that the right hand side of \eqref{X1-X2} is the difference of two systems of iterated projective bundles. 

For any $n$-dimensional vector bundle $V$   on a smooth quasi-projective variety $X$
with Chern roots $\{\lambda_i\}$,  let $\pi: \PP_X(V) \to X$ be the corresponding projective bundle. Take $f(t) \in \CH^*(X)[\![t]\!]$.  By \cite[Theorem 5.35, Lemma 5.36]{Vishik}, we have
\begin{equation}\label{Quill-formula}
\pi_*(f(c_1(\sO(1))))=
\sum_i\frac{f(-\lambda_i)}{\prod_{j\neq i}(\lambda_j-\lambda_i)}. 
\end{equation}

A direct computation using  \eqref{X1-X2} and  the formula \eqref{Quill-formula} shows the desired formula. 
\end{proof}

\begin{lemma}\label{lem:step2}
The polynomial generators (over $\Z[1/2p]$) of $\overline{\MSL}^{\wedge}_{\eta}[1/2p]$ of degrees greater than 4 lie in the ideal $\calI_{fl}$. 
\end{lemma}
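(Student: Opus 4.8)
The plan is to exhibit, for every integer $n>4$, a polynomial generator of ${\MSL}^{\wedge}_{\eta}[1/2p]$ of degree $n$ that is represented by the difference $[X_1]-[X_2]$ of a classical flop of smooth projective $n$-folds over $k$; since such a difference lies in $\calI_{fl}[1/2p]$ by the very definition of $\calI_{fl}$, it can then be taken as the degree-$n$ member of a system of polynomial generators, which proves the lemma. The reduction to a statement about Chern numbers is supplied by Theorem~\ref{thm:MSL}(2): a class $g\in\MGL[1/2p]^{2n,n}$ is a polynomial generator of degree $n$ of ${\MSL}^{\wedge}_{\eta}[1/2p]$ exactly when $s_n(g)$ equals $\pm 2^{a}p^{b}$, or $\pm l\cdot 2^{a}p^{b}$ when $n$ or $n+1$ is a power of an odd prime $l$ — that is, up to a unit of $\Z[1/2p]$, equals the integer $\epsilon_n$ which is $l$ in the exceptional cases and $1$ otherwise. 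Hence it suffices to produce, for each $n>4$, a classical flop (or, since $s_n$ is additive and $\calI_{fl}$ is an ideal, a $\Z$-combination of classical flops) with $s_n([X_1]-[X_2])=\pm\epsilon_n\cdot 2^{a}p^{b}$.

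To set up the computation I would take $Z$ to be a product of projective spaces of total dimension $n-3$ and let $A,B$ be rank-two bundles on $Z$ that are direct sums of line bundles; the associated classical flop of \eqref{X1-X2} — and the two iterated projective bundles on its right-hand side — are then defined over the prime field, so there is no need for an analogue of the Bertini argument used in the proof of Lemma~\ref{lem:fix2}, and finite base fields cause no trouble. For such a choice, Lemma~\ref{prop:compute s_n} evaluates $s_n([X_1]-[X_2])$ as an explicit $\Z$-linear combination, with coefficients monomials in the degrees of the chosen line bundles, of the binomial quantities $(-1)^{i_2}\binom{n-1}{i_1}+(-1)^{i_1}\binom{n-1}{i_2}+(-1)^{i_4+1}\binom{n-1}{i_3}+(-1)^{i_3+1}\binom{n-1}{i_4}$ over $i_1+i_2+i_3+i_4=n-3$; for example $A=\sO(1)\oplus\sO$, $B=\sO\oplus\sO$ over $\bbP^{n-3}$ already gives $s_n=\binom{n-1}{2}+(-1)^{n-1}-2$, which is a unit times $\epsilon_n$ for many values of $n$ outright (e.g. $n=5,6,7$), the remaining ones to be corrected by a further flop.

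The main obstacle is precisely this last step: checking that for every $n>4$ one can choose the line-bundle degrees so that the resulting integer has the prescribed $l$-adic, $2$-adic and $p$-adic valuations, i.e.\ so that the greatest common divisor of the realizable values of $s_n$ is $1$ in the generic case and $l$ when $n$ or $n+1$ is a power of an odd prime $l$. This is a combinatorial computation entirely parallel to the classical construction of complex-cobordism generators (Milnor, Thom, Stong) and to Totaro's topological argument \cite[Theorem~6.1]{Totaro}, but it must respect the dichotomy ``$n$ or $n+1$ a power of an odd prime'', characteristic of $\SL$-cobordism, rather than the ``$n+1$ a power of $p$'' dichotomy that governs $\MU_*$; the hypersurface identities of \cite[p.~241]{S68} already invoked in Lemma~\ref{lem:fix2}, applied now to the projective bundles in \eqref{X1-X2}, furnish the needed input. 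Once this is carried out the conclusion is immediate. Finally, the restriction to degrees $>4$ is optimal: by Theorem~\ref{thm:LYZ} one has $\calI_{fl}\subseteq\ker\phi$, and since the image of $\overline{\phi}$ is a polynomial ring with generators in degrees $2,3,4$, no nonzero element of ${\MSL}^{\wedge}_{\eta}[1/2p]$ of degree $\le 4$ can lie in $\calI_{fl}$.
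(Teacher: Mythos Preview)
Your approach is exactly the paper's: reduce via Theorem~\ref{thm:MSL}(2) to an $s_n$ computation, evaluate $s_n$ on classical flop differences by Lemma~\ref{prop:compute s_n}, and then quote Totaro for the combinatorial verification. The paper's proof is the one-line ``follows from Theorem~\ref{thm:MSL}(2), Lemma~\ref{prop:compute s_n}, and \cite[Lemma~6.2]{Totaro}''.

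Two remarks. First, the specific input you need from Totaro is \cite[Lemma~6.2]{Totaro}, not Theorem~6.1: that lemma carries out precisely the gcd computation you flag as the ``main obstacle'', showing that for each $n>4$ suitable $\Z$-combinations of classical flop differences over products of projective spaces realise the required $s_n$-value. Second, your worry that the ``$n$ or $n+1$ a power of an odd prime'' dichotomy might require redoing Totaro's combinatorics is unfounded. The point of Theorem~\ref{thm:MSL}(2) is that the generator criterion for ${\MSL}^{\wedge}_{\eta}[1/2p]^*$ is \emph{identical} to Novikov's criterion for $\MSU_*$ (away from $2$), and Lemma~\ref{prop:compute s_n} gives the \emph{same} $s_n$-formula for algebraic flop differences as one has topologically; so Totaro's Lemma~6.2 applies verbatim with no modification. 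You do not need the Stong hypersurface identities here.
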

\begin{proof}
This follows directly from Theorem \ref{thm:MSL} (2), Lemma~ \ref{prop:compute s_n}, and \cite[Lemma 6.2]{Totaro}. 
\end{proof}

\begin{lemma}
The polynomial generators of $\overline{\MSL}^{\wedge}_{\eta}[1/2p]^*$ with degree 2, 3, 4 have algebraically independent images  under the map $\phi$. 
\end{lemma}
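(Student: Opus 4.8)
The plan is to reduce the statement to a Jacobian computation, exploiting the polynomial structures of source and target together with Theorems~\ref{thm:LYZ} and~\ref{thm:MSL}. Write $g_n$ for the degree-$n$ polynomial generator of $\MSL^\wedge_\eta[1/2p]^*$ for $n=2,3,4$; by Theorem~\ref{thm:MSL}(1) we may view each $g_n$ inside $\MGL[1/2p]^*$, and, after passing to $\Q$-coefficients, inside $\MGL_\Q^* = \Laz\otimes\Q$, which is a polynomial ring on one generator $x_n$ in each positive degree $n$. Algebraic independence of $\{\phi(g_2),\phi(g_3),\phi(g_4)\}$ over $\Z[1/2p]$ is equivalent to algebraic independence over $\Q$ of their images in $\Ell_\Q=\Q[a_1,a_2,a_3,a_4]$, and this property is unaffected by rescaling each $g_n$ by a unit of $\Z[1/2p]$ or by modifying $g_4$ by a $\Z[1/2p]$-multiple of $g_2^2$ (the only remaining ambiguity in the choice of generators in degrees $\le4$), so it suffices to treat a single choice.

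The first step is to determine $\phi(g_n)$ modulo decomposables. Since $\phi$ is a ring homomorphism, it carries decomposable elements of $\MGL_\Q^*$ to decomposable elements of $\Ell_\Q$, and $s^n$ (the Chern number of $x_1^n+\cdots+x_n^n$) vanishes on decomposables of $\Laz\otimes\Q$ while being nonzero on the Lazard generator $x_n$. By Theorem~\ref{thm:LYZ}, $\phi_\Q\colon \MGL_\Q^*\to \Ell_\Q=\Q[a_1,a_2,a_3,a_4]$ is surjective, hence induces a surjection on indecomposables; in each degree $n\le4$ both indecomposable spaces are one-dimensional (spanned by $x_n$, resp.\ by $a_n$), so $\phi_\Q(x_n)=\nu_n a_n$ modulo decomposables of $\Ell_\Q$ with $\nu_n\ne0$. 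On the other hand, Theorem~\ref{thm:MSL}(2) gives $s^n(g_n)\ne0$, whence $g_n\equiv \lambda_n x_n$ modulo decomposables of $\MGL_\Q^*$ with $\lambda_n=s^n(g_n)/s^n(x_n)\ne0$. Combining, $\phi(g_n)=c_n a_n+(\text{decomposables of }\Ell_\Q)$ with $c_n=\lambda_n\nu_n\ne0$, so that $\phi(g_2)=c_2a_2+(\ast)a_1^2$, $\phi(g_3)=c_3a_3+(\ast)a_1a_2+(\ast)a_1^3$, and $\phi(g_4)=c_4a_4+(\ast)a_1a_3+(\ast)a_2^2+(\ast)a_1^2a_2+(\ast)a_1^4$.

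Algebraic independence then follows from the Jacobian criterion over $\Q$: the $3\times3$ submatrix of $\bigl(\partial\phi(g_i)/\partial a_j\bigr)$ formed by the columns $a_2,a_3,a_4$ is lower triangular with diagonal entries $c_2,c_3,c_4$, hence has nonzero determinant $c_2c_3c_4$, so $\phi(g_2),\phi(g_3),\phi(g_4)$ are algebraically independent. The proof is short given the two cited theorems; the only delicate point is the bookkeeping of decomposables in $\MGL_\Q^*$ versus in $\Ell_\Q$, together with the observation that the indecomposable quotients are one-dimensional precisely in the degrees $n\le4$ that matter here — above degree $4$ the indecomposables of $\Ell_\Q$ vanish, which is the same phenomenon exploited in Lemma~\ref{lem:step2}. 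If an independent check is wanted, the conclusion can also be obtained by computing $\phi$ on explicit Calabi--Yau models (a K3 surface, a quintic threefold, a sextic fourfold) via the Baker--Akhiezer formula of \cite[\S3.1]{LYZ}, mirroring the complex-analytic computation in \cite[\S6]{Totaro}.
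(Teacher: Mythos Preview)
Your argument is correct and takes a genuinely different route from the paper's. The paper proceeds by exhibiting explicit degree-$2,3,4$ generators $W_2,W_3,W_4$ (pinned down by prescribed Chern numbers), checking via Theorem~\ref{thm:MSL}(2) that they are polynomial generators of $\MSL^\wedge_\eta[1/2p]^*$, and then quoting the computation from \cite[\S5]{LYZ} that $\phi(W_2)=24a_2$, $\phi(W_3)=a_3$, $\phi(W_4)=6a_2^2-a_4$, which are visibly algebraically independent. Your approach instead avoids any explicit evaluation of $\phi$: you use the surjectivity of $\phi_\QQ$ from Theorem~\ref{thm:LYZ} to see that $\phi_\QQ$ is an isomorphism on indecomposables in degrees $\le4$, and Theorem~\ref{thm:MSL}(2) to see that each $g_n$ is indecomposable in $\MGL_\QQ^*$; the resulting lower-triangular shape of the Jacobian gives the result. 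What the paper's approach buys is the precise values $\phi(W_i)$, which feed directly into the identification of the image ring $\bbZ[1/2p][3a_2,a_3,a_4]$ in Theorem~\ref{thm:elliptic genus}; your approach is cleaner and self-contained, but by itself only yields algebraic independence, not the exact image. Incidentally, your preliminary remark about the ambiguity in the choice of $g_2,g_3,g_4$ is correct but superfluous: your Jacobian argument already works for an arbitrary choice of generators, since all it uses is $s^n(g_n)\ne0$.
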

\begin{proof}
As in \cite{H}, (see also \cite[Proposition 5.2]{LYZ}), we have the following generators $W_i$ of degree $i$ of $\MGL^*$,   characterized by their Chern numbers
\begin{align*}
&c_1^2[W_2]=0, c_2[W_2]=24;\\
&c_1^3[W_3]=0, c_1c_2[W_3]=0, c_3[W_3]=2;
\\
&c_1^4[W_4]=0, c_1^2c_2[W_4]=0, c_2^2[W_4]=2, c_1c_3[W_4]=0, c_4[W_4]=6.
\end{align*}
In particular, by Theorem~\ref{thm:MSL} (2), we know that $W_i$, $i=2,3,4$, are polynomial generators (over $\Z[1/2p]$) of $\overline{\MSL}^\wedge_\eta[1/2p]^*$ in the corresponding degrees. 

In \cite[\S~5]{LYZ}, we calculated that $\phi(W_2)=24a_2$, $\phi(W_3)=a_3$, and $\phi(W_4)= 6a_2^2-a_4$. Hence, they are algebraically independent. 
\end{proof}
This finishes the proof of Theorem~\ref{thm:elliptic genus}.

We conclude the main body of the paper with the following comment.
\begin{remark}
The proof of $\SU$-rigidity property in \cite{Kr} only uses properties of the Baker-Akhiezer function, hence this argument  also applies to the algebraic genus $\phi:\MGL[1/2p]^*\to\Ell[1/2p]$ studied here. 
Therefore, Theorem~\cite[Theorem~B]{LYZ} implies that any genus factoring through $\calI_{fl}$ is $\SL$-rigid. However, the argument in \cite{Totaro} shows that the converse is also true in the topological setting, although at the present we do not know this in the algebraic setting. 
\end{remark}

\appendix
\section{Convergence of the motivic Adams spectral sequence}
\label{app}
In the topological setting, in \cite{Bous}, Bousfield proved that the 
nilpotent completion of any connective spectrum at the Eilenberg-MacLane spectrum of $\bbZ/l$ is isomorphic to the Bousfield localization  at the Moore spectrum. The goal of this appendix is to study the motivic analogues of Bousfield's result. 

Let  $k$ be an arbitrary perfect field and $\SH(k)$ the stable motivic homotopy category over $k$. For a prime number  $l$  and a motivic spectrum $Y$, $Y^\wedge_l$ is the completion of $Y$ at $l$ as in \S~\ref{subsec:completion}. We set $E:=H\Z/l$, giving us 
$Y^{\wedge}_{E}$, the nilpotent completion of $Y$ at $E$, i.e., the homotopy inverse limit of the Adams tower \eqref{eqn:AdamsTower}   for $H\Z/l$. 

We recall some facts about Voevodsky's slice tower; for details we refer the reader to \cite{RSO, Voev_open}. For each $q\in \Z$, let  $\Sigma^{2q, q}\SH^{\eff}(k)\subset \SH(k)$ be the localizing subcategory generated by the spectra $\Sigma^{2q, q}\Sigma^\infty_TX_+$ for $X\in \Sm_k$. The inclusion $i_q: \Sigma^{2q, q}\SH^{\eff}(k)\subset \SH(k)$ admits the right adjoint $r_q:\SH(k)\to \Sigma^{2q, q}\SH^{\eff}(k)$ and the endofunctor $f_q:\SH(k)\to \SH(k)$ is defined to be the composition $i_q\circ r_q$. For each $M\in \SH(k)$, the co-unit $\eta_M:f_qM\to M$ is thus universal for maps  $N\to M$ in $\SH(k)$, $N$ in $\Sigma^{2q, q}\SH^{\eff}(k)$. The $f_q$ fit together to form the {\em slice tower}
\[
\ldots\to f_{q+1}\to f_q\to\ldots\to \id_{\SH(k)}. 
\]
Moreover, $f_nf_m=f_{\max(m,n)}$.
The $q$th slice functor $s_q:\SH(k)\to \SH(k)$ is characterized as fitting into a natural distinguished triangle
\[
f_{q+1}\to f_q\to s_q\to \Sigma^{1,0}f_{q+1}\ .
\]

Let  $\Sigma^{2q, q}\SH^{\eff}(k)^\perp\subset \SH(k)$ denote the right perpendicular to $ \Sigma^{2q, q}\SH^{\eff}(k)$, that is the full subcategory of objects $N$ with $[M, N]_{\SH(k)}=0$ for all $M\in \Sigma^{2q, q}\SH^{\eff}(k)$. The inclusion $i^q:\Sigma^{2q, q}\SH^{\eff}(k)^\perp\to \SH(k)$ admits the left adjoint $\ell^q:\SH(k)\to \Sigma^{2q, q}\SH^{\eff}(k)^\perp$ and the functor $f^{q-1}:\SH(k)\to \SH(k)$ is defined as the composition $i^q\circ\ell^q$. There is a natural distinguished triangle
\[
f_q\xrightarrow{\eta} \id_{\SH(k)}\xrightarrow{u} f^{q-1}\to \Sigma^{1,0}f_q
\]
with $u:\id_{\SH(k)}\to f^{q-1}$ the unit of the adjunction. For $M\in \SH(k)$, the map $u_M:M\to f^{q-1}M$ is universal for maps $M\to N$ with $N$ in $\Sigma^{2q, q}\SH^{\eff}(k)^\perp$. The relation $f_nf_m=f_{\max(m,n)}$ translates to the relation $f^nf^m=f^{\min(m,n)}$ and we have the distinguished triangle
\[
s_q\to f^q\to f^{q-1}\to \Sigma^{1,0}s_q.
\]

A spectrum $Y\in \SH(k)$ is {\em slice connective} if $f_NY\to Y$ is an isomorphism for some $N$; in other words, $Y\in \SH(k)$ is slice connective if and only if $Y$ is in $\Sigma^N_{\P^1}\SH^{\eff}(k)$ for some $N$. If this is the case, then $f_nY\to Y$ is an isomorphism for all $n\le N$, since $f_nF_N=f_{\max(n,N)}=f_N$. Equivalently, $f^nM=0$ for $n\le N-1$. 

\begin{remarks}\label{rem:Slice} 1. As $f_n\circ f_{n+1}=f_{n+1}f_n=f_{\max(n,n+1)}=f_{n+1}$, and similarly $f_{n+1}f_{n+1}=f_{n+1}$,  applying $f_n$ to the distinguished triangle $f_{n+1}\to f_n\to s_n\to \Sigma^{1,0}f_{n+1}$ shows that  $f_{n+1}s_n=0$ and $f_ns_n=s_ns_n=s_n$. Similarly, $s_ms_n=0$ for $m\neq n$. \\[2pt]
2. By \cite{Voev0Slice} in characteristic zero and \cite[\S 10]{LevineHCT} in arbitrary characteristic, $s_0(\mS_k)\cong H\Z$. Thus $f_0H\Z=s_0H\Z=H\Z$.
\end{remarks}

Following \cite{RSO}, the slice completion endofunctor on $\SH(k)$ $sc$ is defined as
 $sc:=\holim_q f^{q-1}$. 

We say a set of bi-degrees
$\{(p_i, q_i)\}_{i\in I}$  satisfies condition \eqref{eqn:fin} if
\begin{equation}\label{eqn:fin}\tag{Fin}
\hbox{ there exists some $s\in \Z$, such that 
$p_i-2q_i\geq s$, for all $i\in I$.}
\end{equation}
We say a spectrum $Y\in \SH(k)$   satisfies condition \eqref{eqn:fin}, if $H\Z/l\wedge Y=\oplus_{i\in I} \Sigma^{(p_i, q_i)}H\Z/l$, where the index set $\{(p_i, q_i)\}_{i\in I}$ satisfies condition \eqref{eqn:fin}. 

We will need one more finiteness condition, taken from \cite{RSO}. To describe this, fix a pair of integers $(p,q)$, let $D^{p, q}\bbS_k$ be the simplicial mapping cylinder of $\Sigma^{p-1,q}\bbS_k\to 0$ and let $\alpha_{p,q}:\Sigma^{p-1,q}\bbS_k\to D^{p, q}\bbS_k$ be the canonical map.  Roughly speaking, a  motivic spectrum $X$ has  a  {\em cell presentation of finite type} if $X$ is equivalent to the colimit $X_\infty$ of a sequence 
\[
0=X_0\to X_1\to \ldots \to X_{i-1}\to X_i\to \ldots 
\]
with each $X_{i-1}\to X_i$  fitting into a co-cartesian diagram of the form
\[
\xymatrix{
\bigvee_j\Sigma^{p_{ij}-1, q_{ij}}\bbS_k\ar[r]\ar[d]_{\bigvee_j\alpha_{p_{ij}, q_{ij}}}&X_{i-1}\ar[d]\\
\bigvee_jD^{p_{ij}, q_{ij}}\bbS_k\ar[r]&X_i,
}
\]
such that\\[5pt]
i. there is an integer $k$ such that $p_{ij}-q_{ij}\ge k$ for all $i,j$,\\
ii. for each integer $n$, there are only finitely many indices $i,j$ with $p_{ij}-q_{ij}=n$. 
\\[5pt]
For details, we refer the reader to \cite[\S~3.3]{RSO}.

We have the following motivic analogue of the Bousfield isomorphism, which is a more precise version of  Theorem~\ref{thm:intro_MASS}.
\begin{theorem}\label{thm:app}\label{thm:eta completion} Let $l$ be an odd prime.
\begin{enumerate}
\item
Let $Y\in \SH(k)$ be a  slice connective  motivic spectrum and let $Y^{\wedge}_{l, \eta}$ be the completion of $Y$ at $l, \eta$. 
Then we have a weak equivalence
\[
sc(Y^{\wedge}_{l})\cong sc(Y^{\wedge}_{H\Z/l}).
\]
Moreover, if $Y$ has a cell presentation of finite type, then $sc(Y^{\wedge}_{H\Z/l})\cong Y^{\wedge}_{l, \eta}$. 
\item
Let $Y\in \SH$ be a motivic spectrum satisfying condition \eqref{eqn:fin}, 
then $Y^{\wedge}_{H\Z/l}$ is slice complete. 
Moreover, if  $Y$ has a cell presentation of finite type, 
then there is a weak equivalence $Y^{\wedge}_{H\Z/l}\cong Y^{\wedge}_{l, \eta}$. 
\end{enumerate}
\end{theorem}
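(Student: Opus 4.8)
The plan is to transcribe Bousfield's argument \cite{Bous} into the motivic world, with the slice tower of Voevodsky --- as studied in \cite{RSO} --- and its completion $sc=\holim_q f^{q-1}$ playing the role of the Postnikov tower. Three structural observations organize everything. First, the layers of the Adams tower \eqref{eqn:AdamsTower} of $Y$ are the $E$-modules $E\wedge\overline{E}^{\wedge s}\wedge Y$, and since multiplication by $l$ and by the Hopf map $\eta$ are both zero on $E=H\Z/l$, every $E$-nilpotent spectrum is simultaneously $l$-complete and $\eta$-complete; in particular $Y^{\wedge}_{E}$ is $(l,\eta)$-complete. Second, $Y\to Y^{\wedge}_{l}$ is an $E_{*}$-equivalence --- the Milnor-sequence computation in the proof of Lemma~\ref{lem:for MGL} gives $E\wedge Y^{\wedge}_{l}\simeq E\wedge Y$ --- so $(Y^{\wedge}_{l})^{\wedge}_{E}\simeq Y^{\wedge}_{E}$, and one may assume $Y$ is $l$-complete. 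Third, a cell presentation of finite type is exactly what makes $E\wedge(-)$ commute with the homotopy limits appearing below and makes the slice spectral sequence of $Y$ converge strongly in the sense of \cite{RSO}.

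For part (1), I would first prove $sc(Y^{\wedge}_{l})\cong sc(Y^{\wedge}_{E})$ by comparing the two spectra slice by slice. Applying $f^{q-1}$ kills the slices in weights $\ge q$, and each individual slice $s_{q}(-)$ is a module over the zeroth slice $s_{0}\mS_{k}\simeq H\Z$ of the sphere; on a module over $H\Z$ the Hopf map $\eta$ acts as zero, so over a single slice the motivic comparison between the $E$-nilpotent completion and the $l$-adic completion reduces to the classical statement of \cite{Bous} for $H\Z$-modules. Hence $Y^{\wedge}_{l}\to Y^{\wedge}_{E}$ induces an equivalence on every slice, and passing to $\holim_{q}f^{q-1}$ gives $sc(Y^{\wedge}_{l})\cong sc(Y^{\wedge}_{E})$. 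Next, assuming $Y$ has a cell presentation of finite type, I would invoke the convergence results of \cite{RSO}: the slice completion of such a spectrum coincides with its $\eta$-adic completion, so $sc(Y^{\wedge}_{l})\cong(Y^{\wedge}_{l})^{\wedge}_{\eta}=Y^{\wedge}_{l,\eta}$. Chaining the two equivalences yields $sc(Y^{\wedge}_{E})\cong Y^{\wedge}_{l,\eta}$.

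For part (2), suppose $Y$ satisfies condition \eqref{eqn:fin}, so that $E\wedge Y=\bigoplus_{i\in I}\Sigma^{(p_{i},q_{i})}E$ with $p_{i}-2q_{i}\ge s_{0}$ for a fixed $s_{0}$. Then the vanishing theorem (Theorem~\ref{Thm19.3}) forces the $E_{2}$-page $\Ext^{s,(s+t,u)}_{A\bideg}(H^{*,*}(Y),H\bideg)$ of the Adams spectral sequence of $Y$ to vanish for $t>2u$; this vanishing line makes the layers of the Adams tower of $Y$ uniformly bounded below for the slice filtration, so $Y^{\wedge}_{E}$ is already slice complete, i.e.\ $Y^{\wedge}_{E}\cong sc(Y^{\wedge}_{E})$. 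Feeding this into part (1), together with the finite-type hypothesis, gives $Y^{\wedge}_{E}\cong sc(Y^{\wedge}_{E})\cong Y^{\wedge}_{l,\eta}$, as claimed.

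The main obstacle is the interaction of the two towers in the slice-by-slice step: one is really interchanging a homotopy limit over the Adams filtration $s$ with one over the slice weight $q$, and one must show the relevant $\varprojlim^{1}$-terms vanish and that $E\wedge(-)$ passes through these limits --- this is precisely where the cellular-finite-type hypothesis is indispensable. The feature with no classical counterpart is the non-nilpotence of $\eta$: in \cite{Bous} the $H\Z/l$-nilpotent completion is simply $l$-completion, whereas motivically $Y^{\wedge}_{E}$ also remembers an $\eta$-completion, and matching that extra completion with the slice completion via \cite{RSO} is the technical heart of the appendix.
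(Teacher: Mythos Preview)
Your outline hits the right landmarks (Bousfield's strategy, the RSO identification $sc\simeq(-)^\wedge_\eta$ for finite type, the vanishing theorem for part~(2)), but the slice-by-slice step in part~(1) has a genuine gap. You assert that $Y^\wedge_l\to Y^\wedge_E$ is an equivalence on each slice because ``over a single slice the comparison reduces to the classical statement of Bousfield for $H\Z$-modules.'' This tacitly assumes that the slice functor $s_q$ commutes with both $(-)^\wedge_E=\holim_s\overline{E}_s\wedge(-)$ and with $(-)^\wedge_l=\holim_n(-)/l^n$. Neither commutation is obvious: $s_q$ is not known to preserve homotopy limits in general, nor to commute with smashing by $\overline{E}^{\wedge s}$. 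Without this, there is no mechanism reducing $s_q$ of the completed spectrum to the completion of $s_q(Y)$, and the appeal to Bousfield on a single $H\Z$-module has no traction. The same problem afflicts your claim that $E\wedge Y^\wedge_l\simeq E\wedge Y$: smashing with $E$ need not commute with $\holim_n$ for arbitrary $Y$, and Lemma~\ref{lem:for MGL} is specific to $\MGL$.

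The paper circumvents this entirely. Rather than comparing slice by slice, it introduces an adapted notion of \emph{$E$-nilpotent resolution}: towers $\{W_s\}$ under $Y$ with each $W_s$ a finite extension of $E$-modules satisfying $W_s=f^nW_s$ for $n\gg0$, and with $\colim_s[W_s,N]\cong[Y,N]$ for all such $N$. It then proves (i) any two $E$-nilpotent resolutions have the same homotopy limit, and (ii) both mixed towers $\{f^s(\overline{E}_s\wedge Y)\}$ and $\{f^s(S\Z/l^s\wedge Y)\}$ are $E$-nilpotent resolutions of $Y$. A single diagonal argument (as in \cite{CS}) then identifies the common limit with $sc(Y^\wedge_E)$ on one side and $sc(Y^\wedge_l)$ on the other. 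The point is that the slice truncation is baked into the tower from the start, so the troublesome limit interchange is replaced by one clean diagonal argument. For part~(2), your vanishing-line instinct is right but is a statement about homotopy groups, not the slice filtration; the paper instead shows directly that each Adams layer $\overline{E}_s\wedge Y$ is slice complete, using Voevodsky's decomposition $E\wedge E\simeq\bigoplus_I\Sigma^{p(I),q(I)}E$ (with $p(I)\ge 2q(I)$) to propagate condition~\eqref{eqn:fin} through the tower and then applying Theorem~\ref{Thm19.3}.
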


\begin{remark}\begin{enumerate}
\item Mantovani \cite[Theorems 1.0.1, 1.0.3]{Mantovani} has proven similar comparison properties under a different connectivity hypotheses. In the special case of $E=H\Z/l$, $l\neq \Char k$, his results show the following: Suppose that $Y\in \SH(k)$ is connective, that is, there is an $n_0$ such that $\pi_n(Y)_*=0$ for $n<n_0$. Then $Y^\wedge_{H\Z/l}\cong Y^\wedge_{l,\eta}$.  
\item If the field $k$ has characteristic $0$, similar convergence properties of the motivic Adams spectral sequence have been studied by Hu-Kriz-Ormsby in  \cite{HKO}.  The proof in the present paper has no restriction on the characteristic of the field,   except that we assume $l$ is different from the characteristic.
\item Theorem~\ref{thm:app} is false without any finiteness assumption. For an $H\Z/l$-module $X$, we have $X^{\wedge}_{H\Z/l}\cong X$ (see e.g., \cite[Remark~6.9]{DI}). Without condition \eqref{eqn:fin}, there are examples of non-zero $H\Z/l$-modules whose slice completion is zero, one example being the \'etale cohomology spectrum $H^{\acute{e}t}\Z/l$. 
\item Both $\MGL$ and  $\MSL$ have  cell presentations of finite type and satisfy condition \eqref{eqn:fin}. Indeed, \cite[Proposition 3.31]{RSO} says that $\MGL$ has a  cell presentations of finite type. The proof goes by noting that the cell decomposition of $\BGL_n$ discussed here in the proof of Theorem~\ref{thm:MGLMSLASS} gives the suspension spectrum $\Sigma^\infty_{\P^1}\MGL_n$ a cell presentation of finite type with cells of type $(2i,i)$, then applying  \cite[Lemma 3.35]{RSO} shows that $\MGL$ has a cell presentation of finite type. Using the description of $\BSL_n$ as a $\G_m$-bundle over $\BGL_n$ shows as in the proof of Theorem~\ref{thm:MGLMSLASS} that $\Sigma^\infty_{\P^1}\MSL_n$ has a  cell presentation of finite type
with cells of type $(2i,i)$, $(2i-1,i)$, and then the same argument as above shows that $\MSL$ has a cell presentation of finite type. The condition \eqref{eqn:fin} for $\MGL$ and $\MSL$ follows from  Theorem~\ref{thm:MGLMSLCohHom} and Theorem~\ref{Thm19.3}.
\end{enumerate}
\end{remark}

\subsection{Proof of Theorem \ref{thm:eta completion} (1)}
In this subsection, we follow the approach of \cite{Bous} to prove Theorem \ref{thm:eta completion} (1). 

We adapt the same notations as in \cite{Bous}. As above, we set $E:=H\Z/l$.
For each $s\geq 0$, let $\overline{E}^s$ be as in \S~\ref{subsec:completion}; Define $\overline{E}_{s-1}$ by the triangle $
\overline{E}^s\to  {\mS_k}\to \overline{E}_{s-1}\to \Sigma^{1, 0} \overline{E}^s$; in particular, $\bar{E}_0=E$. As in  \cite[(5.1)]{Bous}, we have the distinguished triangle
\begin{equation}\label{(5.1)B}
E\wedge \overline{E}^s \to \overline{E}_s \to \overline{E}_{s-1} \to \Sigma^{1, 0}(E\wedge \overline{E}^s). 
\end{equation}
For a spectrum $Y\in \SH(k)$, the tower  $Y\to \{\overline{E}_s\wedge Y \}$ under $Y$ has homotopy inverse limit $Y^{\wedge}_{E}$. 

We say that $W\in \SH(k)$ is a {\em finite extension of $E$-modules} if there is a tower
\[
W=W_r\to W_{r-1}\to \ldots\to W_1\to W_0=0
\]
in $\SH(k)$, such that the homotopy fiber $\bar{W}_i$ of $W_i\to W_{i-1}$ admits the structure of an $E$-module, for each $i=1,\ldots, r$. We let $M(E)$ be the collection of  finite extensions $W$ of $E$-modules,  such that $W=f^n W$ for all large enough $n\in \N$. An $N\in M(E)$ is called an {\em $E$-nilpotent} object of $\SH(k)$. It is easy to see that $M(E)$ forms a full triangulated subcategory of $\SH(k)$.
 
\begin{definition}
An $E$-nilpotent resolution of $Y$ is a tower $Y\to \{W_s\}$ under $Y$, such that 
\begin{enumerate}
\item {$W_s$ is $E$-nilpotent for all $s$}. 
\item For each $E$-nilpotent $W$, the canonical map $\colim_{s}[W_s, W]\to [Y, W]$ is an isomorphism.  
\end{enumerate}
\end{definition}

{For $M, N\in \SH(k)$ and $t\in \Z$,  we write $[M, N]_t$ for $[\Sigma^{t,0}M, N]_{\SH(k)}$.}

\begin{lemma}\label{lem1:E-nilp res}
The tower $\{f^s(\overline{E}_s \wedge Y)\}$ is an $E$-nilpotent resolution of $Y$. 
\end{lemma}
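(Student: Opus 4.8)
The plan is to follow \cite{Bous} almost verbatim: by the definition recalled above, to prove Lemma~\ref{lem1:E-nilp res} it suffices to check two things about the tower $\{f^{s}(\overline{E}_{s}\wedge Y)\}$ equipped with the evident map from $Y$, namely (i) that each term lies in $M(E)$, and (ii) that $\colim_{s}[f^{s}(\overline{E}_{s}\wedge Y),W]\cong[Y,W]$ for every $W\in M(E)$.

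For (i), iterating the triangle \eqref{(5.1) of Bous} and smashing with $Y$ exhibits $\overline{E}_{s}\wedge Y$ as a finite extension of the $E$-modules $E\wedge\overline{E}^{j}\wedge Y$, $0\le j\le s$ (up to simplicial suspensions, which do not change slice degree). Applying the triangulated functor $f^{s}$, together with the fact that $f^{s}$ sends $E$-modules to $E$-modules — compatibility of the slice filtration with $H\Z/l$-module structures, as in \cite{RSO} — shows $f^{s}(\overline{E}_{s}\wedge Y)$ is again a finite extension of $E$-modules; and since $f^{n}\circ f^{s}=f^{s}$ for all $n\ge s$ (once $n\ge s$, $f^{s}Z$ already lies in the subcategory onto which $f^{n}$ localizes), the boundedness clause $W=f^{n}W$ in the definition of $M(E)$ holds automatically. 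The tower structure under $Y$ is supplied by composing the map $Y\to\overline{E}_{s}\wedge Y$ (whose cofibre is $\Sigma^{1,0}\overline{E}^{s+1}\wedge Y$, from the defining triangle of $\overline{E}_{s}$) with the unit $\overline{E}_{s}\wedge Y\to f^{s}(\overline{E}_{s}\wedge Y)$, and the transition maps $f^{s}(\overline{E}_{s}\wedge Y)\to f^{s-1}(\overline{E}_{s-1}\wedge Y)$ by combining $\overline{E}_{s}\wedge Y\to\overline{E}_{s-1}\wedge Y$ with the natural transformation $f^{s}\to f^{s-1}$; compatibility is formal.

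For (ii), fix $W\in M(E)$ and choose $n$ with $f^{n}W=W$; then $f^{s}W=W$ for all $s\ge n$ as well. For such $s$, the fibre sequence $f_{s+1}(\overline{E}_{s}\wedge Y)\to\overline{E}_{s}\wedge Y\to f^{s}(\overline{E}_{s}\wedge Y)$ has outer term (and its suspension) $(s+1)$-effective while $W=f^{s}W$; the identity $[A,f^{s}(-)]=0$ for $A$ $(s+1)$-effective (immediate from the coreflection property of $f_{s+1}$ applied to the triangle $f_{s+1}\to\id\to f^{s}$) then forces $[f^{s}(\overline{E}_{s}\wedge Y),W]\xrightarrow{\sim}[\overline{E}_{s}\wedge Y,W]$. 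So it remains to prove $\colim_{s}[\overline{E}_{s}\wedge Y,W]\cong[Y,W]$, and by the cofibre sequences $\overline{E}^{s+1}\wedge Y\to Y\to\overline{E}_{s}\wedge Y$ and exactness of filtered colimits this reduces to $\colim_{s}[\overline{E}^{s+1}\wedge Y,\Sigma^{a,0}W]=0$ for $a=0,1$. When $W$ is an $E$-module this is clear: each transition map in $\{[\overline{E}^{s}\wedge Y,W]\}_{s}$ is induced by a map of the shape $\overline{E}\wedge B\to B$, and the module action on $W$ makes $[E\wedge B,W]\to[B,W]$ surjective, hence $[B,W]\to[\overline{E}\wedge B,W]$ zero; a sequential colimit with vanishing transition maps is zero. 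The general case $W\in M(E)$ follows by induction on the length of the filtration by $E$-modules, using the long exact sequences of the defining triangles and exactness of $\colim$.

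The step I expect to be the main obstacle is (i), specifically the assertion that the subquotients of $f^{s}(\overline{E}_{s}\wedge Y)$ are honest $E$-modules: this is not formal from the bare adjunction defining $f_{q}$ and $f^{q-1}$, but relies on the slice functors being compatible with the $H\Z/l$-module structure. Part (ii), by contrast, is essentially bookkeeping once one notices that the Adams-tower maps become null after applying $[-,W]$ for $W$ an $E$-module and that $f^{s}W=W$ stabilizes for $s\gg0$.
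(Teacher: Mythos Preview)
Your proof is correct and follows essentially the same approach as the paper's, namely Bousfield's two-step verification of the definition. The only noteworthy difference is in step (i): you argue that $f^{s}$ sends $E$-modules to $E$-modules (so that applying the triangulated functor $f^{s}$ to the Adams filtration of $\overline{E}_{s}\wedge Y$ yields a filtration by $E$-modules), whereas the paper instead filters $f^{s}(\overline{E}_{s}\wedge Y)$ by the slice tower, invoking Pelaez's result \cite[Theorem~3.6.13(6)]{P11} that slices $s_{n}(M)$ of an $E$-module $M$ are again $E$-modules; both routes establish the same fact and step (ii) is identical in substance.
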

\begin{proof}  We note that $\overline{E}_s \wedge Y$ is a finite extension of $E$-modules by induction on $s\ge0$. Indeed, 
$E\wedge \overline{E}^s$ is an $E$-module, and $\overline{E}_{s-1}$ is a finite extension of $E$-modules by the induction hypothesis,  { starting with  $\overline{E}_0=E$}. 
Hence, $\overline{E}_s \wedge Y$ is a finite extension of $E$-modules using \eqref{(5.1)B}. 

{As noted above, we have $f^n\circ f^s=f^s$ for all $n\ge s$.
We claim that if $M$ is an $E$-module, then so is $f_n M$. Indeed, by  \cite{GRSO}, the $E$-module structure on $M$ induces an $f_0E$-module structure on $f_nM$ for every $n$. As $f_0E\to E$ is an isomorphism (Remark~\ref{rem:Slice}(2)), $f_nM$ is thus an $E$-module. Next, as $f_n$ is exact, if $M$ is a finite extension of $E$-modules, then so is $f_nM$  for every $n$. Therefore, $f^nM:=\text{cofiber}(f_{n+1}M\to M)$ is also a finite extension of $E$-modules  for every $n$. Taking $M=\overline{E}_s \wedge Y$, we conclude that $f^s(\overline{E}_s \wedge Y)$ is a finite extension of $E$-modules and therefore $f^s(\overline{E}_s \wedge Y)$ is in $M(E)$. }

{
For $N\in M(E)$, we have $f^nN=N$, for all large enough $n\in \N$. Therefore
\[\colim_s[f^s(\overline{E}_{s}\wedge Y), N ]\cong
\colim_s[f^s(\overline{E}_{s}\wedge Y), f^sN ]
\cong\colim_s[(\overline{E}_{s}\wedge Y), f^s N]\cong\colim_s[(\overline{E}_{s}\wedge Y), N],
\] 
the second isomorphism coming from the universal property of $f^s$. 
 To complete the proof of the lemma, it suffices to show that the natural map $\colim_s[\overline{E}_{s}\wedge Y, N ]\to [Y, N]$ is an isomorphism for all $N \in M(E)$. }
We have the following diagram {({\it cf.} \cite[(5.2)]{Bous})}, 
\[
\xymatrix @R=1em 
{
\overline{E}^{s+1} \ar[r] \ar[d]& {\mS_k} \ar[r]\ar@{=}[d]& \overline{E}_{s} \ar[r] \ar[d]& \Sigma^{1,0} \overline{E}^{s+1} \ar[d]\\
\overline{E}^{s} \ar[r]&  {\mS_k} \ar[r]& \overline{E}_{s-1} \ar[r] & \Sigma^{1,0} \overline{E}^{s} 
}\]
Applying $[-\wedge Y, N]_*$ and taking colimit, we get a long exact sequence
\[
\cdots \to \colim_{s}[ \overline{E}^{s+1}\wedge Y, N ]_{*+1} \to 
 \colim_{s}[ \overline{E}_{s}\wedge Y, N ]_* \to  [Y, N]_*\to 
  \colim_{s}[ \overline{E}^{s+1}\wedge Y, N ]_*\to \cdots. 
\]
It suffices to show that $ \colim_{s}[ \overline{E}^{s+1}\wedge Y, N ]_* =0$.

{We consider the map  $\pi_s^*:[\overline{E}^{s}\wedge Y, N]_* \to [\overline{E}^{s+1}\wedge Y, N]_*$, which fits into the following long exact sequence 
\[
\cdots \to [E\wedge \overline{E}^{s} \wedge Y, N ]_*
\xrightarrow{u_s^*}  [\overline{E}^{s} \wedge Y, N ]_* \xrightarrow{\pi_{s,N}^*}  [\overline{E}^{s+1} \wedge Y, N ]_*\to \cdots,
\]
arising from the sequence $\overline{E}^{s+1}\xrightarrow{\pi_s} \overline{E}^s\xrightarrow{u_s}E\wedge \overline{E}^{s}$.}

{If $N$ is an $E$-module, then given $\phi: \overline{E}^{s} \wedge Y \to N$, we can take the $E$-linear extension $ \tilde{\phi}: E\wedge \overline{E}^{s} \wedge Y \to N$. As  
$\phi=\tilde{\phi}\circ u_s^*$, this   shows that $\pi_{s,N}^*=0$ when $N$ is an $E$-module.}

{For general $N\in M(E)$, $N$ is a finite extension of $E$-modules. In other words, there is a tower
\[
N=N_r\to N_{r-1}\to \ldots\to N_1\to N_0=0
\]
such that  the fiber $\bar{N}_i$ of each map $N_i\to N_{i-1}$ is an $E$-module. By induction on $r$, we may assume that the composition $\pi^*_{s+{r-1},N_{r-1}}\circ\ldots\circ\pi^*_{s, N_{r-1}}=0$, which implies that the image of $\pi^*_{s+{r-1},N}\circ\ldots\circ\pi^*_{s, N}$ factors through 
$[\overline{E}^{s+r-1} \wedge Y,\bar{N}]_*$, and thus $\pi^*_{s+r,N}\circ\ldots\circ\pi^*_{s, N}=0$, and thus  $ \colim_{s}[ \overline{E}^s\wedge Y, N ]_*=0$.   }
\end{proof}

Similar to Lemma \ref{lem1:E-nilp res}, we have the following. 
\begin{lemma}\label{lem2:E-nilp res}
{Let $Y$ be a slice connective spectrum}. Then
the tower $Y\to \{f^s(S\Z/l^s \wedge Y)\}$ is an $E$-nilpotent resolution of $Y$. 
\end{lemma}
\begin{proof}  We first show that $f^m(S\Z/l^s\wedge Y)$ is in $M(E)$ for all $m\in \Z$ and $s\ge1$.   Since $f^m(f^m(S\Z/l^s\wedge Y))=f^m(S\Z/l^s\wedge Y)$, it suffices to show that $f^m(S\Z/l^s\wedge Y)$ a finite extension of $E$-modules.   Using the distinguished triangle
\[
f^m(S\Z/l^{s-1}\wedge Y)\to f^m(S\Z/l^s\wedge Y)\to f^m(S\Z/l\wedge Y)\to
\Sigma^{1,0}f^m(S\Z/l^{s-1}\wedge Y)
\]
we reduce to the case $s=1$.

By assumption, $Y$ is slice connective, so there is a $c\in \Z$ with $f^nY=0$ for $n<c$.  As $f_n$ is an exact functor,  $f^n(S\Z/l\wedge Y)=0$  for $n<c$. In particular $f^{c-1}(S\Z/l\wedge Y)=0$ is a finite extension of $E$-modules. Using the distinguished triangle
\[
s_n(S\Z/l\wedge Y)\to f^n(S\Z/l\wedge Y)\to f^{n-1}(S\Z/l \wedge Y)\to
\Sigma^{1,0} s_n(S\Z/l\wedge Y)
\]
and induction in $n$, starting with $n=c-1$,  we reduce to showing that $s_n(S\Z/l\wedge Y)$ is  an $E$-module for every $n$. For this, 
we have noted in Remark~\ref{rem:Slice} that $s_0(\mS_k)\cong H\Z$. Letting $X$ be an arbitrary object of $\SH(k)$, $X$ has a canonical structure of an $\bbS_k$-module. By \cite{GRSO, P11}  $s_n(X)$ is an $s_0(\mS_k)\cong H\Z$-module  and thus
\[
s_n(S\Z/l\wedge Y)\cong S\Z/l\wedge s_n(Y)  \cong  H\Z/l \wedge_{H\Z}s_n(Y)
\]
is a module over $E=H\Z/l$. 

Take $N\in M(E)$. By definition there is a $c(N)\in \N$ such that  $N=f^nN$, for all $n>c(N)$. 
Therefore, $[f^s(S\Z/l^s\wedge Y), N]=[S\Z/l^s\wedge Y, N]$ for $s>c(N)$, by the universal property of $f^s$. This implies that the natural map  $\colim_{s}[f^s(S\Z/l^s\wedge Y), N]\to \colim_s[S\Z/l^s\wedge Y, N]$ is an isomorphism. To complete the proof, it suffices to verify that the natural map $\colim_s[S\Z/l^s\wedge Y , N ]\to [Y, N]$ is an isomorphism.   
We have the diagram
\[
\xymatrix @R=1.2em
{
Y\ar[r]^{\times  l^n} \ar[d]_{\times l} &Y\ar[r] \ar@{=}[d] &S\Z/l^n\wedge Y \ar[d]\\
Y\ar[r]_{\times  l^{n-1}} &Y\ar[r] &S\Z/l^{n-1}\wedge Y
}\]
Applying the functor $[-, N]$, we get
\[\xymatrix@R=1.2em{
\cdots \ar[r]&[Y, N]_{*+1} \ar[r]& [S\Z/l^n\wedge Y, N]_* \ar[r]& [Y, N]_* \ar[r]^{\times l^n} & [Y, N]_* \ar[r]& \cdots\\
\cdots \ar[r]&[Y, N]_{*+1} \ar[r] \ar[u]^{\times l}& [S\Z/l^{n-1}\wedge Y, N]_* \ar[r]\ar[u]
& [Y, N]_* \ar[r]_{\times l^{n-1}} \ar@{=}[u]& [Y, N]_* \ar[r] \ar[u]_{\times l} & \cdots
}
\]
Taking the colimit of the above system, we get a long exact sequence. In order to show that $\colim_s[S\Z/l^s\wedge Y , N ]\to [Y, N]$ is an isomorphism, it suffices to show the colimit of the system $\{[Y, N]_*,\times l\}$ is zero. This follows from the fact that for all  $N\in M(E)$,  the multiplication map $\times l: N\to N$ is  a nilpotent endomorphism. 
\end{proof}

Essentially the same argument as \cite[5.9, 5.10. 5.11]{Bous} shows the following. 
\begin{lemma}\label{lem3:E-nilp res} Take $Y\in \SH(k)$ and let $Y\to \{W_s\}_s$, $Y\to  \{W_r'\}_r$ be $E$-nilpotent resolutions. Then there is a canonical isomorphism $\holim_s\{W_s\}_s\cong \holim_r\{W_r'\}_r$ in $\SH(k)$. In particular, if $Y$ is slice connective, then there is a canonical isomorphism $\holim_s f^s(\overline{E}_s \wedge Y)\cong \holim_q f^q( S\Z/l^q \wedge Y)$ in $\SH(k)$.
\end{lemma}

\begin{proof} We give a sketch, indicating the necessary changes. We have the category $\Tow_{\SH(k)}$ of towers in $\SH(k)$, with $\Hom_{\Tow_{\SH(k)}}(\{W_s\}_s, \{W'_r\}_r):=\lim_r\colim_s[W_s, W'_r]$. We consider a tower $Y\to\{W_s\}_s$ under $Y$  as a map of the constant tower $\{Y\}$ to $\{W_s\}_s$. It follows directly from the definitions that if $Y\to \{W_s\}_s$, $Y\to  \{W_r'\}_r$ are $E$-nilpotent resolutions, there is a unique map $\phi: \{W_s\}_s\to  \{W_r'\}_r$  in $\Tow_{\SH(k)}$ making the diagram
\[
\xymatrix{
Y\ar[d]\ar@{=}[r]&Y\ar[d]\\
 \{W_s\}_s\ar[r]_\phi&  \{W_r'\}_r
 }
 \]
commute, in particular, $\phi$ is an isomorphism in $\Tow_{\SH(k)}$. Replacing the stable homotopy group $\pi_*$ with the bi-graded stable homotopy sheaf $\pi_{*,*}$ in the proof of \cite[5.11]{Bous}, one sees that the isomorphism $\phi$ induces an isomorphism $\holim\phi:\holim_s\{W_s\}_s\to \holim_r\{W_r'\}_r$ in $\SH(k)$. 
  
{The second assertion follows from the first together with Lemmas \ref{lem1:E-nilp res} and \ref{lem2:E-nilp res}.}
\end{proof}
 
A diagonal argument as in \cite[Theorem 24.9]{CS}
shows that
\[
\holim_s f^s( \overline{E}_s \wedge Y)
\cong \holim_s f^s \holim_q( \overline{E}_q \wedge Y)
=sc(Y^{\wedge}_E). 
\]
Using the diagonal argument again, we have
\[
\holim_q f^q( S\Z/l^q \wedge Y)
=\holim_{q} f^q( \holim_{s} S\Z/l^s \wedge Y)
=sc(Y^{\wedge}_{l} ). \]
{Applying Lemmas~\ref{lem1:E-nilp res}, \ref{lem2:E-nilp res} and \ref{lem3:E-nilp res} thus gives a canonical isomorphism $sc(Y^{\wedge}_{l})\cong sc(Y^{\wedge}_E)$.}

Recall the following  result of R\"{o}ndigs-Spitzweck-\O stv\ae r. 
\begin{theorem}[\cite{RSO}, Theorem 3.50]
Suppose $Y$ has a cell presentation of finite type. There is a canonical weak equivalence between  $sc(Y)$ and $Y^{\wedge}_{\eta}$. 
\end{theorem}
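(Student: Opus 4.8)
The statement is the theorem of R\"ondigs--Spitzweck--\O stv\ae r that we invoke; were one to reconstruct its proof, the plan is to exhibit $sc(Y)$ as the $\eta$-completion of $Y$ by proving two facts: (I) $sc(Y)$ is already $\eta$-complete, and (II) the homotopy fiber $F$ of the natural map $Y\to sc(Y)$ satisfies $F^{\wedge}_{\eta}\simeq 0$. Granting these, one applies the $\eta$-completion functor, which is exact, to the triangle $F\to Y\to sc(Y)$: by (II) the first term dies and by (I) the last is unchanged, so the canonical map $Y^{\wedge}_{\eta}\to sc(Y)$ is a weak equivalence. That map is the asserted canonical equivalence.

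For (I), the key input is the identification $s_0(\bbS)\simeq H\Z$ (Levine, resp.\ Voevodsky's conjecture): since the slice construction is lax monoidal, every slice $s_j(Y)$ is an $H\Z$-module, and $\eta$ acts as zero on any $H\Z$-module because $\eta$ maps to $\pi_{1,1}(H\Z)=H^{-1,-1}(\Spec k,\Z)=0$. Now $f^{q-1}(Y)$ is an iterated extension, in $q$ steps along the slice tower, of $s_0(Y),\dots,s_{q-1}(Y)$; by naturality of multiplication by $\eta$ one shows by induction that $\eta^{q}$ acts as zero on $f^{q-1}(Y)$. A spectrum annihilated by a power of $\eta$ is $\eta$-complete (the fiber of $Z\to Z^{\wedge}_{\eta}$ is $\holim_n\Sigma^{n,n}Z$ with transition maps $\eta$, which is pro-zero), and a homotopy limit of $\eta$-complete spectra is $\eta$-complete; hence $sc(Y)=\holim_q f^{q-1}(Y)$ is $\eta$-complete. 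The same vanishing $\eta^{q}\cdot f^{q-1}(Y)=0$ also produces factorizations $Y/\eta^{q}\to f^{q-1}(Y)$, and in the limit the comparison map $Y^{\wedge}_{\eta}\to sc(Y)$ whose invertibility we are after.

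For (II) we need $F^{\wedge}_{\eta}\simeq 0$, where $F=\mathrm{fib}(Y\to sc(Y))=\holim_q f_q(Y)$ is the ``infinitely effective part'' of $Y$, and this step is where the finite-type hypothesis is essential and is the main obstacle. Heuristically, $F$ has $\eta$-divisible homotopy sheaves -- for $Y=\bbS$ one sees the intersection $\bigcap_q I^{q}$ of powers of the fundamental ideal $I\subset GW(k)$ appear in $\pi_{0,0}(F)$ -- so that $\pi_{*,*}(F)/\eta^{n}=0$ and the relevant $\lim^{1}$-terms vanish. I would make this precise by feeding the finiteness of a cell presentation into Morel's connectivity estimates for the homotopy $t$-structure: compute $\pi_{*,*}(F)$ as the inverse limit over $q$ of the homotopy sheaves of $f_q(Y)$, which are finitely generated in each tri-degree and become $\eta^{q}$-divisible along the slice tower, and conclude $F^{\wedge}_{\eta}\simeq 0$. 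That some finiteness is unavoidable is visible from the \'etale spectrum $H^{\acute{e}t}\Z/l$, which is slice-acyclic yet $\eta$-complete (being an $H\Z/l$-module), hence not $\eta$-completion-trivial; so the theorem fails without it. An alternative route, avoiding homotopy-sheaf bookkeeping, is to compare the towers $\{Y/\eta^{q}\}$ and $\{f^{q-1}(Y)\}$ directly: the factorizations from (I) give maps one way, and the finite-type hypothesis should furnish sufficiently connected comparison maps $f^{q-1}(Y)\to Y/\eta^{c(q)}$ with $c(q)\to\infty$ in the other, making the two pro-objects pro-isomorphic and hence their homotopy limits $sc(Y)$ and $Y^{\wedge}_{\eta}$ equivalent.
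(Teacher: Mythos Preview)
The paper does not prove this statement at all: it is quoted verbatim as a result of R\"ondigs--Spitzweck--{\O}stv{\ae}r \cite[Theorem~3.50]{RSO} and then immediately applied to conclude $sc(Y^{\wedge}_{l})\cong Y^{\wedge}_{l,\eta}$. There is therefore no ``paper's own proof'' to compare your proposal against.

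That said, your sketch is a reasonable outline of how the RSO argument actually goes. Part~(I) is essentially correct and standard: $s_0(\bbS)\simeq H\Z$ makes every slice an $H\Z$-module, $\eta$ acts by zero on such modules, and inductively $\eta^q$ kills $f^{q-1}(Y)$, so each stage and hence the limit is $\eta$-complete. Part~(II) is where the real work lies, and you correctly flag it as the obstacle; your heuristic that the infinitely effective part has $\eta$-divisible homotopy, together with finite-type control to kill $\lim^1$ obstructions, is the right intuition, and your counterexample $H^{\acute{e}t}\Z/l$ is exactly the one the present paper uses to show finiteness is necessary. What you have not supplied is the actual mechanism RSO use to turn the finite-type cell presentation into the required connectivity estimates on $f_q(Y)$; that argument is genuinely delicate and is the substance of their Theorem~3.50. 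Your alternative ``pro-isomorphism of towers'' route is also plausible but equally lacks the key construction of maps $f^{q-1}(Y)\to Y/\eta^{c(q)}$. So: correct strategy, honest about the gap, but the gap is the theorem.
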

Hence, if $Y$ has a cell presentation of finite type, our isomorphism $sc(Y^{\wedge}_{l})\cong sc(Y^{\wedge}_E)$ gives rise to an isomorphism  $sc(Y^{\wedge}_{l} )\cong Y^{\wedge}_{l,\eta}$. This implies Theorem \ref{thm:eta completion} (1).

\subsection{Proof of Theorem \ref{thm:eta completion}(2)}

\begin{lemma}\label{lem:1}
{Suppose $M\in \SH(k)$ satisfies condition \eqref{eqn:fin}. Then $Y:=E\wedge M$ is slice complete, that is, the natural map $Y\to \holim_qf^q(Y)$ is an isomorphism in $\SH(k)$.}
\end{lemma}
\begin{proof}
{It suffices to show that for  $X\in \Sm_k$ and $a,b\in \Z$, the map $Y\to \holim_qf^q(Y)$ induces an isomorphism}
\[
[\Sigma^{a,b}\Sigma_{T}^{\infty} X, Y] \cong 
[\Sigma^{a,b}\Sigma_{T}^{\infty} X, \holim_qf^qY]. 
\]
By assumption, we have the decomposition $Y=\bigoplus_{\{(p_i, q_i)\}_{i\in I} } \Sigma^{(p_i, q_i)} E$, and there is an integer $s$ such that $p_i-2q_i\ge s$ for all $i\in I$. Since $E=f_0E=s_0E$ (Remark~\ref{rem:Slice}(2)), it follows that $f_{q}(Y)=\bigoplus_{q_i \ge q, i\in I} \Sigma^{ p_i, q_i} E$.   The exact triangle $f_qY\to Y\to f^{q-1} Y$ induces a long exact sequence
\[
\cdots\to [\Sigma^{a,b}\Sigma_{T}^{\infty} X, f_qY ]
\to  [\Sigma^{a,b}\Sigma_{T}^{\infty} X, Y ]
\to  [\Sigma^{a,b}\Sigma_{T}^{\infty} X, f^{q-1} Y ]
\to  [\Sigma^{a+1, b}\Sigma_{T}^{\infty} X, f_qY ]\to \cdots
\]
{Since  $\Sigma^{a, b}\Sigma_{T}^{\infty} X$ is a compact object of $\SH(k)$, we have  
\begin{multline}\label{equ:prod vanish}
[\Sigma^{a, b}\Sigma_{T}^{\infty} X, f_qY ]=
[\Sigma^{a, b}\Sigma_{T}^{\infty} X, \oplus_{q_i >q, i\in I} \Sigma^{ p_i, q_i} E ]\\
=\oplus_{ q_i \ge q, i\in I} [\Sigma_{T}^{\infty} X, \Sigma^{p_i-a, q_i-b} E ]
=\oplus_{ q_i \ge q, i\in I} H^{ p_i-a, q_i-b}(X, \Z/l).
\end{multline}
By the vanishing Theorem \ref{Thm19.3}, $[\Sigma^{a, b}\Sigma_{T}^{\infty} X, f_qY ]$ vanishes if $q>a-b-s+\dim(X)$. 
Consequently, for $q\gg 0$, we have the isomorphism $ 
[\Sigma^{a, b}\Sigma_{T}^{\infty} X, Y] \cong 
[\Sigma^{a, b}\Sigma_{T}^{\infty} X, f^{q-1}Y]$. 
In particular, for all $a,b\in \mathbb{N}$, 
$R^1 \lim_q[\Sigma^{a, b}\Sigma_{T}^{\infty} X, f^qY]=0$ and the maps $Y\to  f^qY$ induce an isomorphism $[\Sigma^{a, b}\Sigma_{T}^{\infty} X, Y] \cong 
\lim_{q}[\Sigma^{a, b}\Sigma_{T}^{\infty} X, f^qY]$. 
The desired isomorphism now follows from the short exact sequence 
\[
0\to R^1\lim{}_q[ \Sigma^{a+1, b}\Sigma_{T}^{\infty} X, f^qY]
\to [\Sigma^{a,b}\Sigma_{T}^{\infty} X, \holim_q f^qY]
\to \lim_q[\Sigma^{a,b}\Sigma_{T}^{\infty} X, f^qY]\to 0.
\]}
\end{proof}
\begin{lemma}\label{lem:2}
If $M\in \SH(k)$ satisfies condition \eqref{eqn:fin}, then $E\wedge M$ satisfies condition \eqref{eqn:fin}. 
\end{lemma}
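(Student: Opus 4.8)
The plan is to unwind the definition of condition \eqref{eqn:fin} and reduce the assertion to the known structure of the motivic dual Steenrod algebra. By hypothesis we may write $E\wedge M\simeq\bigvee_{i\in I}\Sigma^{(p_i,q_i)}E$ with $p_i-2q_i\ge s$ for all $i\in I$ and some fixed $s\in\Z$. Smashing on the left with $E$ yields a weak equivalence
\[
E\wedge(E\wedge M)\;\simeq\;\bigvee_{i\in I}\Sigma^{(p_i,q_i)}(E\wedge E),
\]
so the statement reduces to a statement about $E\wedge E$ alone: it is enough to show that $E\wedge E$ is a wedge $\bigvee_{\alpha\in J}\Sigma^{(a_\alpha,b_\alpha)}E$ of suspensions of $E$ whose indexing bidegrees satisfy a uniform lower bound $a_\alpha-2b_\alpha\ge 0$. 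Indeed, granting this we obtain
\[
E\wedge(E\wedge M)\;\simeq\;\bigvee_{(i,\alpha)\in I\times J}\Sigma^{(p_i+a_\alpha,\,q_i+b_\alpha)}E,
\]
and $(p_i+a_\alpha)-2(q_i+b_\alpha)=(p_i-2q_i)+(a_\alpha-2b_\alpha)\ge s$ for every index, so $E\wedge M$ satisfies \eqref{eqn:fin}.

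For the claim about $E\wedge E$, I would invoke Voevodsky's computation \cite{Voev03}: $\pi_{*,*}(E\wedge E)$ is the motivic dual Steenrod algebra, a free left $H\bideg$-module on a monomial basis dual to the Milnor basis, so (being a cellular $E$-module with free homotopy) $E\wedge E$ splits as a wedge of suspensions of $E$ whose shifts are exactly the bidegrees of those basis monomials. Each basis monomial is a product of the dual generators, which by Lemma~\ref{lem:steen_generators}(1) lie in bidegrees $(2l^r-1,l^r-1)$ and $(2(l^r-1),l^r-1)$; these contribute $+1$ and $0$ respectively to the quantity $p-2q$, while the coefficient ring $H\bideg$ is itself concentrated, by Theorem~\ref{Thm19.3} applied to $\Spec k$, in bidegrees with $p-2q\ge 0$. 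Hence every shift $(a_\alpha,b_\alpha)$ satisfies $a_\alpha-2b_\alpha\ge 0$, as needed.

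I expect the only delicate points to be bookkeeping ones: making sure the lower bound on $\pi_{*,*}(E\wedge E)$ is recorded with the homological indexing actually used for $\pi_{*,*}$ (so that it is genuinely the vanishing line of Theorem~\ref{Thm19.3}), and recalling that the splitting of $E\wedge E$ into suspensions of $E$ is available over our arbitrary perfect field with $l\ne p$ (which it is, by Voevodsky's computation together with its positive-characteristic refinement). All the remaining steps are formal manipulations of wedge decompositions.
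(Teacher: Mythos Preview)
Your argument is correct and is essentially the paper's own: decompose $E\wedge(E\wedge M)$ via the assumed wedge splitting of $E\wedge M$, then apply Voevodsky's splitting $E\wedge E\simeq\bigoplus_{I\in B}\Sigma^{p(I),q(I)}E$ indexed by the Milnor basis and check $p(I)-2q(I)\ge 0$ on generators. Your aside about $H\bideg$ is unnecessary---the shifts in the $E$-module splitting are the bidegrees of an $H\bideg$-basis, so no $H\bideg$-contribution enters---and in fact Theorem~\ref{Thm19.3} gives the opposite inequality $p-2q\le 0$ on $H\bideg(\Spec k)$; this slip has no bearing on the proof.
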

\begin{proof}
We have 
\begin{align*}
E\wedge E\wedge M=
&E\wedge ( \bigoplus_{i\in \Lambda} \Sigma^{p_i, q_i}E)
\cong  \bigoplus_{i\in \Lambda} \Sigma^{p_i, q_i}E\wedge E\\
\cong &\bigoplus_{i\in \Lambda} \Sigma^{ p_i, q_i} ( \bigoplus_{I\in B} \Sigma^{p(I), q(I)}E)
\cong \bigoplus_{i\in \Lambda} \bigoplus_{I\in B} 
\Sigma^{ p_i+p(I), q_i+q(I)} E
\end{align*}
Here the isomorphism  $E\wedge E\cong \bigoplus_{I\in B} \Sigma^{p(I), q(I)}E$, with  $p(I)\geq 2q(I)$ for $I\in B$,  is essentially due to Voevodsky \cite[Theorem 4.46]{VoevEM}  (in characteristic zero); see  \cite[Corollary 3.4]{HSO} or \cite[Theorem 11.24]{S} for this result in arbitrary characteristic prime to $l$.    This completes the proof.
\end{proof}
\begin{lemma}\label{lem:3}
If $M\in \SH(k)$ satisfies condition \eqref{eqn:fin}, then  for any $s\in \N$, $\overline{E}_s\wedge M$ is { in the full triangulated subcategory $\calC$ of $\SH(k)$ generated by objects of} the form $E\wedge N$ with $N$ satisfying condition \eqref{eqn:fin}. 
\end{lemma}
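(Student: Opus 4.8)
The plan is to run an induction on $s$ built on the triangle \eqref{(5.1) of Bous}, after a preliminary step showing that the smash powers $\overline{E}^s\wedge M$ are still of the tractable shape. The engine is entirely formal, and the only structural input is the decomposition of $E\wedge E$ already exploited in the proof of Lemma~\ref{lem:2}.

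First I would record that condition \eqref{eqn:fin} is stable under smash products: if $X,Y\in\SH$ both satisfy \eqref{eqn:fin}, say $E\wedge X\cong\bigoplus_i\Sigma^{(p_i,q_i)}E$ with $p_i-2q_i\ge s_X$ for all $i$ and $E\wedge Y\cong\bigoplus_j\Sigma^{(p_j',q_j')}E$ with $p_j'-2q_j'\ge s_Y$ for all $j$, then
\[
E\wedge X\wedge Y\cong\Big(\bigoplus_i\Sigma^{(p_i,q_i)}E\Big)\wedge Y\cong\bigoplus_i\Sigma^{(p_i,q_i)}(E\wedge Y)\cong\bigoplus_{i,j}\Sigma^{(p_i+p_j',\,q_i+q_j')}E,
\]
and $(p_i+p_j')-2(q_i+q_j')\ge s_X+s_Y$, so $X\wedge Y$ satisfies \eqref{eqn:fin}. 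This is the same computation that appears in the proof of Lemma~\ref{lem:2}.

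Next I would check that $\overline{E}$ itself satisfies \eqref{eqn:fin}. Smashing the triangle $\overline{E}\to\bbS\to E$ with $E$ yields a triangle $E\wedge\overline{E}\to E\xrightarrow{\eta}E\wedge E\to\Sigma^{1,0}(E\wedge\overline{E})$ in which the unit $\eta$ is split by the multiplication $E\wedge E\to E$; hence the triangle splits and $\Sigma^{1,0}(E\wedge\overline E)$ is the complementary summand of $E$ inside $E\wedge E$. As recalled in the proof of Lemma~\ref{lem:2}, $E\wedge E\cong\bigoplus_{I\in B}\Sigma^{(p(I),q(I))}E$ with $p(I)\ge 2q(I)$, the unit picking out the summand $I=0$ with $(p(0),q(0))=(0,0)$. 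Therefore $E\wedge\overline E\cong\bigoplus_{I\in B,\,I\ne 0}\Sigma^{(p(I)-1,\,q(I))}E$, and $p(I)-1-2q(I)\ge -1$, so $\overline E$ satisfies \eqref{eqn:fin}. Combining this with the smash-stability above (and with Lemma~\ref{lem:2} applied to $M$), it follows that $\overline{E}^s\wedge M$ satisfies \eqref{eqn:fin} for every $s\ge 0$ whenever $M$ does; in particular $E\wedge\overline{E}^s\wedge M=E\wedge(\overline{E}^s\wedge M)$ is of the form $E\wedge M'$ with $M'$ satisfying \eqref{eqn:fin}.

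Finally I would induct on $s\ge 0$. For $s=0$ one has $\overline{E}_0\cong E$ (the cofiber of $\overline{E}\to\bbS$), so $\overline{E}_0\wedge M\cong E\wedge M$ is already of the required form. For the inductive step, smash \eqref{(5.1) of Bous} with $M$ to get the triangle
\[
E\wedge\overline{E}^s\wedge M\to\overline{E}_s\wedge M\to\overline{E}_{s-1}\wedge M\to\Sigma^{1,0}(E\wedge\overline{E}^s\wedge M);
\]
by the previous step the first term is $E\wedge M'$ with $M'$ satisfying \eqref{eqn:fin}, and by the inductive hypothesis $\overline{E}_{s-1}\wedge M$ is a finite extension of spectra of that kind, so $\overline{E}_s\wedge M$ is a finite extension of such spectra as well. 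There is no genuine obstacle in this argument; the only point to treat with a little care is the splitting of the triangle for $\overline{E}$ — equivalently, the identification of $E\wedge\overline E$ with the non‑unit part of $E\wedge E$ and the resulting bound $p(I)-1-2q(I)\ge -1$ — together with the base-case bookkeeping $\overline{E}_0\cong E$.
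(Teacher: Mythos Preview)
Your proof is correct and follows the same outer induction on $s$ via the triangle \eqref{(5.1) of Bous} as the paper. The difference lies in how you handle the term $E\wedge\overline{E}^s\wedge M$: the paper runs a second induction on $s$, using the triangle $E\wedge\overline{E}^s\wedge M\to E\wedge\overline{E}^{s-1}\wedge M\to E\wedge E\wedge\overline{E}^{s-1}\wedge M$ together with Lemma~\ref{lem:2} to build this spectrum as an iterated extension of pieces of the form $E\wedge M'$. You instead observe that the unit $E\to E\wedge E$ is split by the multiplication, so $E\wedge\overline{E}$ is the complementary summand of $E$ in $E\wedge E$; the Steenrod-algebra decomposition then shows $\overline{E}$ itself satisfies \eqref{eqn:fin}, and smash-stability gives that $\overline{E}^s\wedge M$ satisfies \eqref{eqn:fin} outright. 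This is a slightly sharper intermediate statement (one piece rather than an extension) and eliminates the inner induction, at the cost of invoking the splitting and identifying the unit summand explicitly. Both routes rest on exactly the same structural input, namely the decomposition of $E\wedge E$ with $p(I)\ge 2q(I)$.
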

\begin{proof}
We prove this by induction.   For $s=0$, $\overline{E}_0\wedge M=E\wedge M$, so the assertion is trivially true.  For general $s>0$, the induction hypothesis and the exact triangle $E\wedge \overline{E}^s\wedge M \to \overline{E}_s\wedge M \to \overline{E}_{s-1}\wedge M$  reduces us to showing that $E\wedge \overline{E}^s\wedge M$ is   in $\calC$. 

This  is also proved by induction in $s$. For $s=1$, we have the exact triangle 
$E\wedge \overline{E }\wedge M\to E\wedge M\to  E\wedge E\wedge M$ and both  $M$ and $E\wedge M$ satisfy condition \eqref{eqn:fin}, the latter by Lemma \ref{lem:2}. 
In general, consider the distinguished triangle
\[
E\wedge \overline{E}^{\wedge s}\wedge M \to E\wedge \overline{E}^{\wedge s-1}\wedge M \to E\wedge E\wedge\overline{E}^{\wedge s-1}\wedge M.\] 
By the induction hypothesis  $E\wedge \overline{E}^{\wedge s-1}\wedge M$ is in $\calC$. By Lemma \ref{lem:2}, $\calC$ is closed under the operation $M\mapsto E\wedge M$, hence 
$E\wedge E\wedge\overline{E}^{\wedge s-1}\wedge M$ is also in $\calC$ and thus 
$E\wedge \overline{E}^{\wedge s}\wedge M$ is in $\calC$.
\end{proof}

We note that the slice complete objects in $\SH(k)$ form the objects in a full triangulated subcategory of $\SH(k)$. Thus, by Lemma \ref{lem:1}, an object in the category $\calC$ of Lemma \ref{lem:3} is slice complete and thus by Lemma \ref{lem:3}, $\overline{E}_s\wedge Y$ is slice complete for every $s\ge0$.  
As taking slice completion commutes with homotopy limits, it follows  by a diagonalization argument that $Y^{\wedge}_E$ is slice complete, completing the proof of Theorem \ref{thm:eta completion}(2).

\section{A proof of Novikov's lemma}\label{App:Novikov}
Novikov \cite[Lemma 16]{Nov} states without proof a description of $H^*(\MSU, \mathbb{Z}/l)$ as a module over the mod $l$ Steenrod algebra. This result plays a central role in our work, and we were unable to find a proof in the literature, so we include a proof here. We also use a description of an additive basis of 
$H^*(\MSU, \mathbb{Z}/l)$ in terms of ``$l$-admissible partitions'' in defining the virtual partition $\yu_r$ (Definition~\ref{def:NovikovClass}), and this basis is needed in our proof of Novikov's lemma, so we give a proof of this fact as well.

We retain the notations for the classical Steenrod algebra $A^{\topo}$ and its quotient $M_B^{\topo}$ from \S~\ref{sec:SteenMod}.

\subsection{The $\mathbb{Z}/l$ basis of $H^*(\MSU, \mathbb{Z}/l)$}
\begin{definition}
Let $l$ be a prime number.  A partition $\omega = (a_1, \ldots , a_s)$ is called $l$-admissible if the number 
 \[
\# \{i\mid a_i=l^r\} 
 \]
 is a multiple of $l$, for any $r\geq 0$. 
In particular, when $r=0$, the number $\# \{i\mid a_i=1\}$  is a multiple of $l$. 
\end{definition}
\begin{example}\label{ex:p large}
When $l> \sum_{i=1}^s a_i$, the condition of $l$-admissibility is equivalent to the condition that
\[
a_i\neq 1, \text{for any $i=1, \ldots, s$.}
\]
When a field is of characteristic zero, we use the convention that $l>>0$. 
This means that $a_i \neq 1$ for $i = 1, \ldots, s$ for a field of characteristic zero. 
\end{example}

\begin{prop}\label{prop:Admissible}
$H^*(\MSU, \mathbb{Z}/l)= \lim_k \mathbb{Z}/l [t_1, t_2, \ldots, t_k]^{S_k} /(t_1+t_2+\cdots +t_k)= \mathbb{Z}/l[c_2, c_3, \ldots]$ has a $\mathbb{Z}/l$ basis given by monomial symmetric functions corresponding to $l$-admissible partitions. 
\end{prop}
We first show that the monomial symmetric functions associated to $l$-admissible partitions span $H^*(\MSU, \mathbb{Z}/l)$.  

\begin{lemma}\label{lem:division}
Let $\omega$ be a partition of length at most $k$ and let  $u_{\omega}$ be the corresponding monomial symmetric function in variables $t_1, \ldots, t_k$. Then we have a decomposition
\[
u_{\omega}=\sum_{\omega'} u_{\omega'} + (t_1+t_2+\cdot +t_k) f(t_1, \ldots, t_k), 
\]
where each $\omega'$ is an $l$-admissible partition, and $f(t_1, \ldots, t_k)\in \mathbb{Z}/l[t_1, t_2, \ldots t_k]^{S_k}$. 
\end{lemma}
\begin{proof}
If $\omega$ is already an $l$-admissible partition, we can choose $f$ to be zero. This gives the above decomposition. 

Assume $\omega$ is not $l$-admissible, then start with the smallest $r_1$, such that 
\[
\# \{i\mid a_i=l^{r_1}\}
\] is coprime to $l$. 
The  monomial symmetric function $u_{\omega}$ is of the form
\[
u_{\omega}=\sum  t_1^{l^{r_1}} t_2^{l^{r_1}} \cdots t_b ^{l^{r_1}} t_{b+1} ^{a_{b+1}}\cdots t_s^{a_s}  
\]
Working over $\mathbb{Z}/l$, we have the identity
\[
(t_1+\cdots+t_k)^{l^{r}}=t_1^{l^r}+t_2^{l^r}+\cdots+t_k^{l^r}. 
\]
We now compute the following difference (here the $\Sigma$ is the symmetrization of a monomial to the corresponding  monomial symmetric function)
\begin{align}
&(t_1+\cdots+t_k)^{l^{r_1}} \cdot ( \Sigma t_2^{l^{r_1}} \cdots t_b ^{l^{r_1}} t_{b+1} ^{a_{b+1}}\cdots t_s^{a_s} )-u_{\omega} \notag\\
=&(t_1^{l^{r_1}}+t_2^{l^{r_1}}+\cdots+t_k^{l^{r_1}}) \cdot ( \Sigma t_2^{l^{r_1}} \cdots t_b ^{l^{r_1}} t_{b+1} ^{a_{b+1}}\cdots t_s^{a_s} )-u_{\omega} \notag\\
=&\Sigma t_2^{2l^{r_1}} \cdots t_b ^{l^{r_1}} t_{b+1} ^{a_{b+1}}\cdots t_s^{a_s}+
\Sigma t_2^{l^{r_1}} \cdots t_b ^{l^{r_1}} t_{b+1} ^{a_{b+1}+l^{r_1}}\cdots t_s^{a_s}+
\cdots +\Sigma t_2^{l^{r_1}} \cdots t_b ^{l^{r_1}} t_{b+1} ^{a_{b+1}}\cdots t_s^{a_s+l^{r_1}} \label{division}
\end{align}
In the above linear combinations, we have the following partitions
\begin{align}
&(2\cdot l^{r_1}, l^{r_1}, \ldots, l^{r_1}, a_{b+1}, \ldots, a_{s}) \,\  \,\   \,\ (\text{$b-2$ copies of $l^{r_1}$}), \\
&(l^{r_1}, \ldots, l^{r_1}, a_{b+1}+l^{r_1}, \ldots, a_{s}) \,\  \,\   \,\ (\text{$b-1$ copies of $l^{r_1}$})\\
&\ldots \\
&(l^{r_1}, \ldots, l^{r_1}, a_{b+1}, \ldots, a_{s}+l^{r_1}) \,\  \,\   \,\ (\text{$b-1$ copies of $l^{r_1}$})
\end{align}
To summerise, applying the process \eqref{division} to $\omega=(l^{r_1}, l^{r_1}, \ldots, l^{r_1}, a_{b+1}, \ldots, a_{s})$, the number of copies of $l^{r_1}$ in the resulting partitions  decreases (by either one or two), and  none of these new partitions involve $l^r$ with $r<r_1$. 
We repeat the process \eqref{division} to each of the new partitions, until the number of copies of $l^{r_1}$ in each is a multiple of $l$ or is zero.   

After that, we repeat the process \eqref{division} to the next minimal $r_2$ (necessarily $r_2>r_1$), such that, $\# \{i\mid a_i=l^{r_2}\}$ is coprime to $l$. 
We repeat the process until   $\# \{i\mid a_i=l^{r}\}$ is a multiple of $l$, for all $r \geq 0$. The resulting partitions are thus all $l$-admissible. 
\end{proof}

We now show that the monomial symmetric functions for $l$-admissible partitions form a basis of  $H^*(\MSU, \mathbb{Z}/l)$ by computing the respective Hilbert series (as all the cohomology rings we are considering are concentrated in even degree, we will use half the cohomology degree as the grading degree). 
Note that the Hilbert series of $\mathbb{Z}/l[c_2, c_3, \ldots]$ is $\prod_{i=2}^{\infty} (1-t^i)^{-1}$. 
 
\begin{prop}\label{prop:admissible} Let $l$ be a prime.\\[2pt]
1. We have the identity
\[
\prod_{i=2}^{\infty} (1-t^i)^{-1}
=\sum_{n\ge0} \#\{\text{ $l$-admissible partitions of $n$} \}t^n. 
\]
In particular, the right hand side is independent of $l$. \\[2pt]
2. For $l\neq 2$, and for all $n\ge0$,
\begin{multline}\label{eqn:ind}
\#\{\omega=(a_1,\ldots, a_s) \vdash n\mid a_i\neq 1, i=1, \ldots, s, \text{and $\omega$ is non-$l$ adic}\}\\
=
\#\{\omega \vdash n\mid \text{$\omega$ is non-$l$ adic and is $l$-admissible}\}.
\end{multline}

\end{prop}
\begin{proof} 1. In the product $\prod_{i=2}^\infty(1-t^i)^{-1}$, the term $t^{im}$ in the factor $(1-t^i)^{-1}=\sum_{n=0}^\infty t^{in}$ contributes to the count of partitions containing $i$ exactly $m$ times.  Thus, we can write the generating function $\sum_{n\ge0} \#\{\text{ $l$-admissible partitions of $n$} \}t^n$ as
\begin{align*}
\sum_{n\ge0} \#\{\text{ $l$-admissible partitions of $n$} \}t^n&=\prod_{i\ge 2, i\neq l^r, r\ge1}(1-t^i)^{-1}\cdot \prod_{r=0}^\infty(\sum_{m=0}^\infty t^{l^r\cdot l\cdot m})\\
&=\prod_{i\ge 2, i\neq l^r, r\ge1}(1-t^i)^{-1}\cdot \prod_{r=1}^\infty(\sum_{m=0}^\infty t^{l^r\cdot m})\\
&=\prod_{i\ge 2}(1-t^i)^{-1}
\end{align*}

The proof of (2), using generating functions, is the same as for (1), we just delete from the product description of the respective generating functions the terms $(1-t^i)^{-1}$ for $i=l^r-1$, $r\ge1$.
\end{proof}

\subsection{The $A^{\topo}$-module structure of  $H^*(\MSU, \mathbb{Z}/l)$}\label{AppSubsec:Novikov}
We recall the statement of Novikov's lemma. We have the quotient $M_B^{\topo}:=A^{\topo}/(Q_0, Q_1,\ldots)$ of the classical mod $l$ Steenrod algebra $A^{\topo}$.

\begin{lemma*}[Novikov \hbox{\cite[Lemma 16]{Nov}}] For each non $l$-adic, $l$-admissible partition $\omega$, the map $M_B^{\topo}\to H^*(\MSU, \mathbb{Z}/l)$ sending $P$ to $P(u_\omega)$ is a well-defined injective $A^{\topo}$-module map and induces a decomposition as $A^{\topo}$-module 
\[
H^*(\MSU, \mathbb{Z}/l)=\oplus_{\omega\mid  \omega\text{ non $l$-adic, $l$-admissible}}M_B^{\topo}u_\omega.
\]
\end{lemma*}
The proof follows in a number of steps.

\begin{prop}\label{prop:identity} Let $n\ge0$ be an integer. We have the identity
\[
\sum_{i=0}^n \#\{\omega \vdash i\mid \text{$\omega$ is non $l$-adic and is $l$-admissible}\}
=\#\{\omega \vdash n \mid \text{$\omega$ is non $l$-adic}\}
\]
\end{prop}
\begin{proof}
Let $F(t)$, $G(t)$, $H(t)$ be the generating functions
\begin{align*}
F(t)&=\sum_{n\ge0} \#\{\omega \vdash n\mid \text{$\omega$ is non $l$-adic and is $l$-admissible}\}\cdot t^n\\
G(t)&=\sum_{n\ge0}(\sum_{i=0}^n \#\{\omega \vdash i\mid \text{$\omega$ is non $l$-adic and is $l$-admissible}\})\cdot t^n\\
H(t)&=\sum_{n\ge 0} \#\{\omega \vdash n \mid \text{$\omega$ is non $l$-adic}\}\cdot t^n
\end{align*}
By Proposition~\ref{prop:admissible}, $F(t)=\prod_{i\ge 2, i\neq l^r-1, r\ge1}(1-t^i)^{-1}$. But $G(t)=(1-t)^{-1}F(t)$, which is equal to $H(t)$. 
\end{proof}

For a   graded $\Z/l$-module $M=\oplus_{n\ge0}M_n$  with finite dimensional summands $M_n$,   we have the Hilbert series $P_t(M):=\sum_{n\ge0}\dim_{\Z/l} M_n\cdot t^n$. 
\begin{prop}\label{prop:HilbertSeries}
For each partition $\omega$, we have the submodule $M^{\topo}_B u_{\omega}\subset H^*(\MU,\Z/l)$, with grading induced by the grading  in $H^*(\MU,\Z/l)$: $(M^{\topo}_B u_{\omega})_n=M^{\topo}_B u_{\omega}\cap H^{2n}(\MU,\Z/l)$. Then, 
\[
P_t( \oplus_{\omega\mid \text{$\omega$ is non $l$-adic, $l$-admissible}} M^{\topo}_Bu_{\omega})=\prod_{i=2}^{\infty} (1-t^i)^{-1}
\]
\end{prop}
\begin{proof} We note that for $\omega\vdash n$ non $l$-adic, $P_t(M^{\topo}_Bu_\omega)=t^n\cdot P_t(M^{\topo}_B)$, as the multiplication map $M^{\topo}_B\to M^{\topo}_Bu_\omega$ is an isomorphism (\cite[Lemma 4]{Nov} or \cite[Theorem 2]{Mil2}). 
By Proposition \ref{prop:identity} we have
\begin{align}
(1-t)^{-1}P_t( \oplus_{\omega\mid \text{$\omega$ is non $l$-adic, $l$-admissible}} M^{\topo}_B u_{\omega})
=P_t( \oplus_{\omega\mid \text{$\omega$ is non $l$-adic}} M^{\topo}_B u_{\omega}). \label{eq:1/1-t}
\end{align}
Since $H^*(\MU)=\oplus_{\omega\text{ non $l$-adic}}M^{\topo}_B u_{\omega}$ (again, \cite[Lemma 4]{Nov} or \cite[Theorem 2]{Mil2}), we have
\[
P_t(\oplus_{\omega\mid \text{$\omega$ is non $l$-adic}} M^{\topo}_B u_{\omega})= \prod_{i=1}^{\infty} (1-t^i)^{-1}. 
\]
The conclusion now follows by cancelling the factor $(1-t)^{-1}$ from both sides of \eqref{eq:1/1-t}. 
\end{proof}

The natural map  $H^*(\MU,\Z/l)\to H^*(\MSU,\Z/l)$ gives us the well-defined $A^{\topo}$-module homomorphism
\[
 \oplus_{\omega\mid \text{$\omega$ is non $p$-adic, $p$-admissible}} M^{\topo}_Bu_{\omega}
 \to H^*(\MSU,\Z/l). 
\] 
By Proposition~\ref{prop:HilbertSeries}, Novikov's lemma will follow once we show that this map is surjective, which we now proceed to do 

We have the following commutative diagram
\[
\xymatrix{
A^{\topo}\times H^*(\MU, \mathbb{Z}/l) \ar[r] \ar@{->>}[d]&H^*(\MU, \mathbb{Z}/l) \ar@{->>}[d]\\
A^{\topo}\times H^*(\MSU, \mathbb{Z}/l) \ar[r]  &H^*(\MSU, \mathbb{Z}/l)\\
}
\]
For each $u_{\omega}$ such that $\omega$ is $l$-admissible,
we lift $u_{\omega}$ to $H^*(\MU, \Z/l)$; we continue to denote this lifting by $u_{\omega}$. 
We have the decomposition
\begin{equation}\label{eq:non p admissible}
u_{\omega}=\sum_{\omega'} P^{R_{\omega'}} u_{\omega'}, 
\end{equation}
where $\omega'$ is non-$l$-adic, but could be non-$l$-admissible. 
The degree of $u_{\omega}$ is strictly larger than the degree of $u_{\omega'}$, unless $\omega$ is already non-$l$-adic. 
Indeed, all $R_{\omega'}$ from the right hand side of \eqref{eq:non p admissible} is nonzero. 
Otherwise, we have the decomposition
\begin{align}\label{eq:omega_1}
u_{\omega}= u_{\omega_1} + \sum_{\omega''} P^{R_{\omega''}} u_{\omega''}, 
\end{align}
where $\omega_1, \omega''$ are non-$l$-adic partitions, and $\omega\neq \omega_1$. 
Note that $u_{\omega_1}$ and $ \sum_{\omega''} P^{R_{\omega''}} u_{\omega''}$ are in two different summands  of
\[
 \oplus_{\omega\mid \text{$\omega$ is non $p$-adic}} M_B^{\topo} u_{\omega}=H^*(\MU,\Z/l). 
\]
Thus, $u_{\omega_1}$ and $ \sum_{\omega''} P^{R_{\omega''}} u_{\omega''}$ are $\Z/l$ linearly independent.  Since $\{u_{\omega'}\}$, as $\omega'$ runs over all partitions,  is a $\Z/l$ basis of $H^*(\MU,\Z/l)$, the decomposition \eqref{eq:omega_1} forces $\omega=\omega_1$,  a contradiction. 
The equality \eqref{eq:non p admissible} is homogenous, and each $R_{\omega'}$ is non-zero. 
Therefore
\[
\deg(u_{\omega})> \deg(u_{\omega'}), \text{unless $\omega$ is non $l$-adic}. 
\]
For those non $l$-admissible $u_{\omega'}$, we apply Lemma \ref{lem:division} to change $u_{\omega'}$ to admissible partitions $u_{\omega''}$ ( $u_{\omega''}$ could be $l$-adic), where 
\[
\deg(u_{\omega})> \deg( u_{\omega'})=
\deg( u_{\omega''}). 
\]
Now we repeat the above process to $u_{\omega''}$, and each time the degree is decreasing. 
When $\deg(u_{\omega})=1$, we have $u_{\omega}=t_1+t_2+\cdots+t_k$,  which is zero. 
This finishes the proof. 

\newcommand{\arxiv}[1]
{\texttt{\href{http://arxiv.org/abs/#1}{arXiv:#1}}}
\newcommand{\doi}[1]
{\texttt{\href{http://dx.doi.org/#1}{doi:#1}}}
\renewcommand{\MR}[1]
{\href{http://www.ams.org/mathscinet-getitem?mr=#1}{MR#1}}

\end{document}